\newtheorem{theorem}{Theorem}[section]
\newtheorem{lemma}[theorem]{Lemma}%[section]
\newtheorem{remark}{Remark}[section]
\numberwithin{equation}{section}
\newcommand{\be}{\begin{equation}}
\newcommand{\ee}{\end{equation}}
\newcommand\bea{\begin{eqnarray}}
\newcommand\eea{\end{eqnarray}}
\newcommand{\bean}{\begin{eqnarray*}}
\newcommand{\eean}{\end{eqnarray*}}
\def\nn{\nonumber}
\begin{document}
\title{\LARGE{A fast compact difference scheme with unequal time-steps
for the tempered time-fractional Black-Scholes model}}
\author{Jinfeng Zhou\footnotemark[1],~~Xian-Ming Gu$^{\S,}$\footnotemark[1],~~Yong-Liang Zhao\footnotemark[2],~~Hu Li\footnotemark[3]}
\maketitle
\footnotetext[1]
{\footnotesize School of Mathematics, Southwestern University of Finance and Economics, Chengdu,
Sichuan 611130, P.R. China. E-mail: {\tt 2207441009@qq.com}; {\tt guxianming@live.cn},
{\tt guxm@swufe.edu.cn} (corresponding author)}
\footnotetext[2]
{\footnotesize School of Mathematical Sciences, Sichuan Normal University, Chengdu, Sichuan 610068, P.R. China. E-mail:
{\tt ylzhaofde@sina.com}}
\footnotetext[3]
{\footnotesize School of Mathematics, Chengdu Normal University, Chengdu, Sichuan 611130, P.R. China. E-mail:
{\tt lihu\_0826@163.com}}

\begin{abstract}
\noindent The Black-Scholes (B-S) equation has been recently extended
as a kind of tempered time-fractional B-S equations, which becomes an
interesting mathematical model in option pricing. In this study, we
provide a fast numerical method to approximate the solution of the tempered
time-fractional B-S model. To achieve high-order accuracy in space and
overcome the weak initial singularity of exact solution, we combine
the compact difference operator with L1-type approximation
under nonuniform time
steps to yield the numerical scheme. The convergence of the proposed difference scheme is proved to
be unconditionally stable. Moreover, the kernel function in the tempered Caputo fractional
derivative is approximated by sum-of-exponentials, which leads to
a fast unconditionally stable compact difference method that reduces the computational cost.
Finally, numerical results demonstrate the effectiveness of the proposed methods.

\end{abstract}

\noindent
{\bf Keywords:} Tempered time-fractional B-S model; nonuniform time steps; exponential
transformation; compact difference scheme; weak regularity.

\noindent {\bf AMS subject classifications:} 65M06, 65M50, 26A33, 91G20.
%%%%%%%%%%%%%%%%%%%%%%%%%%%%
\section{Introduction}
%%%%%%%%%%%%%%%%%%%%%%%%%%%%
%\setcounter{equation}{0}
The Black-Scholes (B-S) model is an important method of option pricing because
of its clever combination of option pricing, random fluctuation of underlying
asset price and risk-free interest rate \cite{ksendal03}. However, the classic
B-S model was put forward by Black and Scholes under a series of assumptions
\cite{Staelen}. Through observation and research on the stock market, many scholars
found that the essential characteristics and state of the capital market are random
fluctuations, which are not completely consistent with these assumptions of the
traditional B-S option pricing model. The pricing of the model is different from
the actual market price. Therefore, in order to extend the application of B-S
equation from the ideal state of stock price to a more realistic state, many
scholars have done a lot of work, such as B-S model with transaction cost
\cite{Meng2010}, stochastic interest rate model \cite{Nicolas}, jump-diffusion
model \cite{Kou02}, etc.

Besides, in order to make Brownian motion reflect more properties such as
autocorrelation, long-term memory and incremental correlation, some other
scholars began to consider modifying the original partial differential equation
(DE) of Brownian motion which leads to new option pricing models which are more
suitable for the actual financial market. With the discovery of the fractal structure
of DEs in the financial field, more and more attention has been paid to
such fractional DEs in this field. In 2000, Wyss \cite{Wyss2000} first applied
the idea of fractal to the financial field and deduced the time-fractional B-S (TFBS)
option pricing model. Later, Jumariel \cite{Jumarie2008} use the fractional Taylor
formula to deduce and demonstrate the TFBS option pricing formula, Cartea \cite{Cartea13}
propose to model stock price tick-by-tick data via a non-explosive marked point process where the model equation satisfied by the value of European-style derivatives contains a Caputo fractional derivative in time-to-maturity. Liang et al. \cite{Liang2010} proposed a special TFBS equation based on the real market option price analysis in a fractional transfer system. Magdziarz \cite{Magdziarz9} consider a generalization of this model, which is based on a subdiffusive geometric Brownian motion and captures the subdiffusive characteristics of financial markets. There are also other TFBS models available
at \cite{Chen15,Farhadi,Meng2010}.

With the development of TFBS model, there is a growing concern on its solution which is hard to be solved in the analytical manner
\cite{Liang2010,Chen15,Fadugba}. Thus it is necessary to study efficient numerical
methods for such models. Krzy\`{z}anowskia et al. \cite{Grzegorz20} present the weighted
finite difference method to solve the subdiffusive B-S model numerically and prove
its convergence. In \cite{Zhuang16}, the numerical solution of the TFBS model governing
European options is obtained by using the L1 approximation of Caputo fractional derivatives,
which can achieve ($2-\alpha$)-order accuracy in time and second-order accuracy in space.
Tian et al. \cite{Tian20} proposed three compact finite difference schemes for the TFBS
model governing European option pricing where the time fractional derivative is approximated
by L1 formula, L2-1$_\sigma$ formula and L1-2 formula, then convergence orders of these
three compact difference schemes are fourth-order in space and $(2-\alpha)$-, 2-, and
$(3-\alpha)$-order in time, respectively. Staelen and Hendy \cite{Staelen} studied an
implicit numerical scheme with a temporal accuracy of ($2-\alpha$)-order and the spatial
accuracy of fourth-order by using the Fourier analysis method. In addition, some other
related numerical methods based on different spatial discrezations
for the TFBS model can be found in
\cite{Dimitrov,Kazmi22,Roul20,Abdi2022,Sarboland,Koleva17,An2021,Akram22,Rezaei21}.

It is worth noting that the above numerical methods can reach the theoretical convergence
order with the assumption that the exact solution of TFBS model is sufficiently
smooth in the time variable. But in fact, exact solutions of time-fractional partial DEs
always enjoy the weak singularity near the initial time, this fact makes most of the above numerical
methods fail to achieve the optimal order convergence \cite{Sakamoto,Stynes17}. In order to overcome
the weakly initial singularity of exact solution, numerical methods with
nonuniform time steps introduced
in \cite{Stynes17,Liao2018,Shen2018} have been considered to solve the model problem; see e.g.,
\cite{Cen2018,Luchko09} for details. In order to overcome the difficulty of initial layer, She
et al. \cite{She2021} present modified L1 time discretization is presented based on a change of variable for
solving the TFBS model. However all the above numerical methods need huge storage and
computational cost due to the nonlocal time fractional derivative. In order to reduce the computational cost, Song and Lyu \cite{Kerui21} use the fast sum-of-exponentials (SOE) approximation
of of Caputo fractional derivative \cite{Jiang17,Shen2018} to present a fast numerical method
for TFBS equations, the fast algorithm keeps the convergence of second-order accuracy in time
and fourth-order accuracy in space. Moreover, it reduces the computational complexity significantly.
The stability of their proposed schemes is established base on the analysis framework developed in \cite{Liao19}.

When we consider the option pricing in such a stagnated market, the tempered TFBS model does
better in estimating the fair price than the classic B-S model, Krzyzanowskia
and Magdziarza \cite{Wang2016} proposed the tempered subdiffusive B-S model assumed that the
underlying asset is driven by $\alpha$-stable $\lambda$-tempered inverse subordinator \cite{Cartea2007,Meerschaert}.
In this paper, we are interested in such a tempered TFBS model governing
European options:
\begin{equation}
\begin{cases}
\frac{\partial^{\alpha,\lambda}C(S,t)}{\partial t^{\alpha,\lambda}} + \frac{1}{2}\sigma^2S^2
\frac{\partial^2 C(S,t)}{\partial S^2} + \hat{r}S\frac{\partial C(S,t)}{\partial S}
-rC(S,t) = 0,& (S,t)\in(0,+\infty)\times[0,T),\\
C(S,T) = \mu(S),& S\in(0,+\infty),\\
C(0,t) = \phi(t),\quad C(+\infty,t) = \varphi(t),& t\in[0,T),
\end{cases}
\label{eq1.1}
\end{equation}
where $\alpha\in(0,1)$ and $\lambda \geq 0$, $T$ is the expiry time, $
\hat{r} = r - D$ ($r >0$ and $D\geq 0$ are the risk-free rate and the dividend yield,
respectively) and $\sigma > 0$ is the volatility of the returns from
the holding stock price $S$. Here the terminal condition is typically
chosen as $\mu(S)$ for the payoff of the option. The fractional derivative operator in
Eq. (\ref{eq1.1}) is a modified right Riemann-Liouville tempered fractional derivative defined as
\begin{equation}
\frac{\partial^{\alpha,\lambda} C(S,t)}{\partial t^{\alpha,\lambda}}
= \frac{e^{-\lambda (T-t)}}{\Gamma(1 - \alpha)}\frac{\partial}{\partial t}
\int^{T}_t\frac{e^{-\lambda(\xi - T)}C(S,\xi)-C(S,T)}{(\xi - t)^{\alpha}}d\xi,
\end{equation}
where $\alpha = 1$ and $\lambda=0$, the model (\ref{eq1.1}) collapses to the classical
B-S model.

Let $\tau = T - t$, $e^x = S$ and $U(x,\tau) = C(S,t)$, some tedious but simple calculations yield
\begin{equation}
\begin{split}
-\frac{\partial^{\alpha,\lambda} C(S,t)}{\partial t^{\alpha,\lambda}}
& = \frac{e^{-\lambda \tau}}{\Gamma(1 - \alpha)}\frac{\partial}{\partial \tau}
\int^{\tau}_0\frac{e^{\lambda\eta}C(S,T - \eta)-C(S,T)}{(\tau - \eta)^{\alpha}}d\eta\\
& = \frac{e^{-\lambda \tau}}{\Gamma(1 - \alpha)}\frac{\partial}
{\partial \tau}\int^{\tau}_0\frac{e^{\lambda \eta}U(x,\eta) - U(x,0)}{(\tau
- \eta)^{\alpha}}d\eta\\
& = \frac{e^{-\lambda\tau}}{\Gamma(1 - \alpha)}\int^{\tau}_0\frac{1}{(\tau
- \eta)^{\alpha}}\cdot\frac{\partial [e^{\lambda \eta}U(x,\eta)]}{\partial \eta}d\eta\\
& \triangleq {}^{C}_0\mathbb{D}^{\alpha,\lambda}_{\tau}U(x,\tau),
%\vspace{-6mm}
\end{split}
\end{equation}
where ${}^{C}_0\mathbb{D}^{\alpha,\lambda}_{\tau}$ is the tempered Caputo fractional
derivative (tCFD) \cite[Eq. (3)]{Zhao20}. To solve the above model numerically, it always truncates
the original infinite $x$-domain $\in\mathbb{R}$ to a finite domain $\Omega\cap\partial\Omega = [x_l,x_r]$,
then the model (\ref{eq1.1}) can be eventually rewritten as
%%%%%%%%%%%
%|%Then the general model we consider is of the following form:
\begin{equation}
\begin{cases}
{}^{C}_0\mathbb{D}^{\alpha,\lambda}_{\tau}U(x,\tau) =
\frac{1}{2}\sigma^2\frac{\partial^2 U(x,\tau)}{\partial x^2} +
c\frac{\partial U(x,\tau)}{\partial x} -rU(x,\tau)+ f(x,\tau),& (x,\tau)
\in\Omega\times(0,T],\\
U(x,0) = \zeta(x),& x\in\Omega,\\
U(x_l,\tau) = \phi(\tau),\quad U(x_r,\tau) = \varphi(\tau),& \tau\in(0,T],
\end{cases}
\label{eq1.2x}
\vspace{-2mm}
\end{equation}
where $c = \hat{r} - \frac{1}{2}\sigma^2$, $\kappa(x) = \mu(e^x)$ and a source term $f(x, \tau)$ is just added for the purposes of validation in
Section \ref{sec4} without loss of generality. Meanwhile, the initial condition is
chosen as $\zeta(x) = \max(K - e^x,0)$ (which is only continuous)
as suggested by the model (\ref{eq1.1}). In addition, the above equation (\ref{eq1.2x})
can be viewed a special case of the tempered time-fractional advection-diffusion-reaction
equations \cite{Meersc08}.

In fact, the paper \cite{Wang2016} presents a finite difference method that has the $(2 -\alpha)$-
and 2-order of accuracy with respects to time and space respectively to the option
pricing in the model (\ref{eq1.1}). However, such a study
seems to be less efficient because it overlooks two numerical difficulties of the tempered
TFBS model, i.e., the nonlocal time tCFD and the weakly initial singularity
of the exact solution. In order to remedy such numerical difficulties, we first transform
the model (\ref{eq1.2x}) into a tempered time-fractional diffusion-reaction (tTFDR) equation
with homogeneous boundary conditions (BCs), which holds the same solution. Then the regularity
of exact solution of the transformed tTFDRE equation is investigated by the method of variable separation
that is different from the idea we extend introduced in \cite{Morgado}. Moreover, an implicit difference method
with the compact difference operator in space and the graded L1 formula in time will be derived for such a transformed tTFDR equation with homogeneous BCs. To reduce the computational cost caused by the nonlocal
tCFD, the fast SOE approximation \cite{Jiang17,Shen2018} is extended
to reconstruct a fast difference method for the transformed tTFDR equation. Meanwhile,
the proposed schemes are proved to be unconditional stable and convergent with the $(2 -\alpha)$-
and 4-order of accuracy with respects to time and space, respectively. Finally, we report
some numerical experiments to examine the feasibility of the proposed methods.

The rest of the paper is organized as follows. In Section \ref{sec2}, the nonuniform
temporal discretization is used to set up a compact difference scheme for
the equivalent tTFDR equation. Moreover, the unconditional stability and convergence
of $\min\left\{\gamma\alpha,2-\alpha\right\}$-order in time (where the mesh grading index
$\gamma\geq 1$ is chosen by the user) and fourth-order in space of the proposed scheme are displayed by mathematical introductions. In Section \ref{sec3}, we establish a fast
compact difference scheme that reduces the computational cost and then state the convergence for solving
the equivalent tTFDR equation. Numerical examples are provided
in Section \ref{sec4} to show the theoretical statements. A brief conclusion is followed in Section
\ref{sec5}.
\vspace{-2mm}
%%%%%%%%%%%%%%%%%%%%
\section{Construction of the compact difference scheme}
\label{sec2}
In this section. We first establish a direct finite difference scheme for solving
the equivalent tTFDR equation. Since it is well-known that the exact solution of
Eq. (\ref{eq1.2x}) always has the weak singularity at the initial time (see
{\em Appendix \ref{appd}} for details), i.e., the solution will be non-smooth enough
near the initial time, then the classical L1 scheme cannot
achieve the optimal convergence order of $(2-\alpha)$. Alternatively, the non-uniform
temporal discretization is proved to be a reliable numerical technique for solving the
equivalent tTFDR equation.
\vspace{-1mm}
\subsection{The equivalent reformulation of tTFDR equations}
%%%%%%%%%%%%%%%%%%%
In order to simplify the derivation of the high-order spatial discrezation for the model
(\ref{eq1.2x}), we first denote $v(x,\tau):= k(x)[U(x,\tau) - z(x,\tau)]$, where
\begin{equation}
z(x,\tau):=\frac{\varphi(\tau) - \phi(\tau)}{x_r - x_l}(x - x_l) + \phi(\tau)\quad
{\rm and}\quad k(x) = \exp\Big(\frac{c(x - x_l)}{\sigma^2}\Big),
\end{equation}
then it is not hard to note that the problem (\ref{eq2.1}) is equivalent to the next equations
with homogeneous BCs:
\begin{equation}
\begin{cases}
{}^{C}_0\mathbb{D}^{\alpha,\lambda}_{\tau} v(x,\tau) = \frac{1}{2}\sigma^2\frac{\partial^2 v(x,\tau)
}{\partial x^2} - qv(x,\tau) + g(x,\tau), & (x,\tau)\in\Omega\times(0,T],\\
v(x,0) = \sigma(x), & x\in\Omega,\\
v(x,\tau) = 0, & (x,\tau)\in\partial\Omega\times(0,T],
\end{cases}
\label{eq2.1}
\vspace{-1.5mm}
\end{equation}
where $q = \frac{c^2}{2\sigma^2} + r > 0$, $\sigma(x) = k(x)[\zeta(x) - z(x,0)]$ and $g(x,\tau) =
k(x)[f(x,\tau) + cz_x(x,\tau) - rz(x,\tau) - {}^{C}_0\mathbb{D}^{\alpha,
\lambda}_{\tau}z(x,\tau)\footnote{Due to two known functions $\phi(\tau)$ and $\varphi(\tau)$, it is easy to compute
the term ${}^{C}_0\mathbb{D}^{\alpha,\lambda}_{\tau}z(x,\tau)$ in the analytical (or numerical) manner.}]$
and
%ds^^{}
%where $q = \frac{c^2}{2\sigma^2} + r > 0$, $\sigma(x) = k(x)\cdot\tilde{\zeta}(x)$ and $g(x,\tau) = k(x)\cdot\tilde{f}(x,\tau)$.
%\begin{equation}
%%{lll}
%,\quad \phi^{\ast}(\tau) = \phi(\tau),
%\varphi^{\ast}(\tau) =  k(x_r)\varphi(\tau),
%\label{eq2.2}
%\end{equation}
%and
%\begin{equation}
%
%\end{equation}
it is clear that $U(x, \tau)$ is a solution of (\ref{eq1.2x}) if and only if
$v(x, \tau)$ is a solution of (\ref{eq2.1}); refer to \cite{Gracia18,Tian20,Kerui21} for details.
Therefore, in the following, our proposed finite difference methods for the problem (\ref{eq1.2x}) are
based on the equivalent form (\ref{eq2.1}).
\vspace{-2mm}
%g(x, = { x} - rv(x,
%%%%%%%%%%%%%%%%%%%%%%%%%%%%%%%
\subsection{Discretization in time on non-uniform steps}
%%%%%%%%%%%
For nonuniform time levels $\tau_n = T\left(\frac{n}{M}\right)^{\gamma}$,
the $n$-th time step size is set as $\Delta \tau_n:= \tau_n - \tau_{n-1}
$ with $n=0,1\ldots,M$, the spatial grid nodes $x_i = x_l + ih,i = 0, 1,\cdots, N$, where $h = (x_r - x_l)/N$ are space
grid size and $M,N\in\mathbb{N}^{+}$ respectively. Since the grid function $\{v_i|0\leq
i\leq N\}$, then we define the following difference operators:
\begin{equation*}
\delta_xv_{i-1/2} = \frac{v_i - v_{i-1}}{h},~~\delta^{2}_xv_i = \frac{v_{i+1} - 2v_i +
v_{i-1}}{h^2},~~\mathcal{H}_xv_i =
\begin{cases}
\left(1 + \frac{h^2}{12}\delta^{2}_x\right)v_i, &1\leq i\leq N-1,\\
v_i, & i=0,~N,
\end{cases}
\vspace{-2mm}
\end{equation*}

Let $\Pi_{1,n}u$ denote the linear interpolation of a function $u(\tau)$ with respect
to the nodes $\tau_{n-1}$ and $\tau_n$, then it is easy to find that
\begin{equation*}
(\Pi_{1,n}u)'(\tau) = \frac{u(\tau_n) - u(\tau_{n-1})}{\Delta\tau_n} := \frac{\nabla_{\tau}u^n}{\Delta\tau_n},
\end{equation*}

At this stage, we recall $u(\tau_n) = e^{\lambda \tau_n}v(\tau_n)$
and extend the L1 formula on graded meshes \cite{Stynes17} for tCFD
at the time point $\tau_n$:
\begin{equation}
\begin{split}
{}^{C}_0\mathbb{D}^{\alpha,\lambda}_{\tau}v(\tau_n) & = \frac{e^{-\lambda\tau_n}}{\Gamma(1 - \alpha)}
\sum^{n}_{k=1}\int^{\tau_k}_{\tau_{k-1}}(\tau_k
- \eta)^{-\alpha}(\Pi_{1,k}u)'(\eta)d\eta + \mathcal{R}^{n}\\
& = \frac{e^{-\lambda \tau_n}}{\Gamma(1-\alpha)}\sum^{n}_{k= 1}a_k^{(n,\alpha)}(u^k-u^{k-1})+\mathcal{R}^n
\left(:=e^{-\lambda \tau_n}\cdot\mathcal{D}^{\alpha}_{\tau}u(\tau_n)\right)\\
%& = \frac{1}{\Gamma(1-\alpha)}\sum^{n}_{ k=1}a_k^{(n,\alpha)}\left[e^{\lambda(\tau_{k}-\tau_n)}v(\tau_{k})
%- e^{\lambda(\tau_{k-1}-\tau_n)}v(\tau_{k-1})\right] + \mathcal{R}^n\\
& = \frac{1}{\Gamma(1-\alpha)}\Bigg[a^{(n,\alpha)}_nv(\tau_n) - \sum^{n-1}_{k= 1}
(a^{(n,\alpha)}_{k+1} -a_k^{(n,\alpha)})e^{\lambda(\tau_k-\tau_n)}v(\tau_k)~- \\
&\quad~a_1^{(n,\alpha)}e^{\lambda(\tau_0-\tau_n)}v(\tau_0)\Bigg]+\mathcal{R}^n\\
& := \mathcal{D}^{\alpha,\lambda}_{\tau}v(\tau_{n}) + \mathcal{R}^n,
\end{split}
\label{eq2.7}
\end{equation}
where $\mathcal{R}^n: = {}^{C}_0\mathbb{D}^{\alpha,\lambda}_{\tau}v(\tau)
\mid_{\tau = \tau_n} - \mathcal{D}^{\alpha,\lambda}_{\tau}v(\tau_n)$ is
the truncation error and
\begin{equation}
\begin{split}
a_k^{(n,\alpha)}& =\frac{1}{\Delta\tau_k}\int_{\tau_{k-1}}^{\tau_k}\frac{\mathrm{d}s}{(\tau_n-s)^\alpha}\\
& = \frac{(\tau_n - \tau_{k-1})^{1-\alpha} - (\tau_n - \tau_k)^{1-\alpha}}{(1-\alpha)\Delta\tau_k},\quad k=1, 2, \cdots, n.
\label{191}
\end{split}
\end{equation}
Here the above formula (\ref{eq2.7}) is named after the nonuniform tempered L1 formula. Obviously, if $\lambda \equiv 0$,
the approximate formula (\ref{eq2.7}) reduces to the nonuniform L1 formula for approximating the Caputo fractional
derivative \cite{Shen2018}.
\vspace{-2mm}
\begin{remark}
Although the exact integration is used to evaluate the coefficients $a^{(n,\alpha)}_k$ in Eq. (\ref{191}), when the
fractional order $\alpha$ is small and the grading index $\gamma$ is large, then $(\tau_n - \tau_k)^{1-\alpha}$
is very
close to $(\tau_n - \tau_{k-1})^{1-\alpha}$ and thus the round-off error of this subtraction in
Eq. (\ref{191}) will be remarkable in the numerical implementation and even it cannot yield some acceptable
numerical results\footnote{See some related experimental results in our arXiv preprint \url{https://arxiv.org/abs/2303.10592v2}.}.
In order to overcome such an issue, we first introduce
some notations and follow the idea in \cite[Section 4]{Franz23} to derive a computational stable implementation:
\begin{equation}
d_k(\tau) = \tau - \tau_k,\quad \kappa_k(\tau) = \ln\Bigg(\frac{d_k(\tau)}{d_{k-1}(\tau)}\Bigg) =
{\tt log1p}\Bigg(-\frac{\Delta \tau_k}{\tau - \tau_{k-1}}\Bigg).
\end{equation}
Now, according to Eq. (\ref{191}), we have $a^{(n,\alpha)}_n = \frac{1}{(1-\alpha)(\Delta \tau_n)^{\alpha}}$ and
\begin{equation}
\begin{split}
a^{(n,\alpha)}_k
%&= \frac{-1}{(1-\alpha)\Delta\tau_k}\left[(\tau_n - \tau_k)^{1-\alpha} - (\tau_n - \tau_{k-1})^{1-\alpha}\right]\\
& = \frac{-[d_{k-1}(\tau_n)]^{1 - \alpha}}{(1-\alpha)\Delta\tau_k}\Bigg[\Bigg(\frac{d_{k}(\tau_n)}{d_{k-1}(\tau_n)}\Bigg)^{1-\alpha} - 1\Bigg]\\
& = \frac{-[d_{k-1}(\tau_n)]^{1 - \alpha}}{(1-\alpha)\Delta\tau_k}{\tt expm1}((1-\alpha)\kappa_k(\tau_n)),\quad k = 1,2,\cdots,n-1,\\
\end{split}
\end{equation}
where ``{\tt expm1}" and ``{\tt log1p}" are two built-in functions in MATLAB. After such a new formula
for $a^{(n,\alpha)}_k$ is utilized in the nonuniform tempered L1 formula (\ref{eq2.7}), we observed stable performance of the above reformulation in all our experiments.
\end{remark}

In what follows, $C$ and $c_j$ are constants, which depend on the problem but not on
the mesh parameters. Moreover, we can give the following properties of the coefficients $a^{(\alpha,n)}_k$:
\begin{lemma}\rm{(\hspace{-0.2mm}\cite{Shen2018})}\label{le2}
{\it For $\alpha\in(0,1)$ and $\{a_{k}^{(n,\alpha)}|1\leqslant n \leqslant M\}~$ defined in ~Eq. (\ref{191}),~it holds
\begin{eqnarray}
0< a_1^{(n,\alpha)} <  a_2^{(n,\alpha)} <\cdots  < a_n^{(n,\alpha)},~~  n=1, 2, \cdots, M,\label{615}\\
a_1^{(1,\alpha)}> a_1^{(2,\alpha)}>\cdots > a_1^{(N,\alpha)}\ge T^{-\alpha}.\label{615BB}
\end{eqnarray}
There exists a constant $c_1$ such that $a_{n}^{(n,\alpha)}-a_{n-1}^{(n,\alpha)}\ge c_1M^\alpha$ with $n=2, 3, \cdots, M$.}
\label{615AA}
\end{lemma}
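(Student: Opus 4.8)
The plan is to work throughout from the mean-value (integral) representation
\[
a_k^{(n,\alpha)} = \frac{1}{\Delta\tau_k}\int_{\tau_{k-1}}^{\tau_k}\frac{\mathrm{d}s}{(\tau_n - s)^{\alpha}},
\]
already recorded as the first line of Eq. (\ref{191}), which exhibits $a_k^{(n,\alpha)}$ as the average of the integrand $g_n(s):=(\tau_n - s)^{-\alpha}$ over the subinterval $[\tau_{k-1},\tau_k]$. Since $\alpha\in(0,1)$, the function $g_n$ is strictly positive and strictly increasing on $[0,\tau_n)$, while for each fixed $s$ it is strictly decreasing in $n$. These two monotonicities of a single elementary integrand drive the first two assertions.

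For (\ref{615}), positivity of every $a_k^{(n,\alpha)}$ is immediate from $g_n>0$. To obtain $a_k^{(n,\alpha)}<a_{k+1}^{(n,\alpha)}$ I would compare the averages of $g_n$ over the adjacent intervals $[\tau_{k-1},\tau_k]$ and $[\tau_k,\tau_{k+1}]$: because $g_n$ is strictly increasing, its average over the left interval is strictly below $g_n(\tau_k)$ whereas its average over the right interval is strictly above $g_n(\tau_k)$, which yields the strict chain. For (\ref{615BB}) I would instead fix the index $k=1$ and vary $n$; since $g_n(s)$ strictly decreases in $n$ for each $s\in[0,\tau_1]$, the average $a_1^{(n,\alpha)}$ is strictly decreasing in $n$, giving the claimed ordering. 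The lower bound then follows by estimating the average from below by the minimum of $g_n$ on $[0,\tau_1]$, attained at $s=\tau_0=0$, namely $a_1^{(n,\alpha)}\ge g_n(0)=\tau_n^{-\alpha}\ge T^{-\alpha}$, using $\tau_n\le\tau_M=T$.

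The main obstacle is the final inequality $a_n^{(n,\alpha)}-a_{n-1}^{(n,\alpha)}\ge c_1 M^{\alpha}$, since this is the only place where the specific graded mesh $\tau_n=T(n/M)^{\gamma}$ must be used (the first two parts hold for an arbitrary mesh). Here I would begin from the closed form $a_n^{(n,\alpha)}=\frac{(\Delta\tau_n)^{-\alpha}}{1-\alpha}$, which follows directly from Eq. (\ref{191}) with $k=n$, and bound $a_{n-1}^{(n,\alpha)}$ from above by the maximum of $g_n$ on $[\tau_{n-2},\tau_{n-1}]$, i.e. $g_n(\tau_{n-1})=(\Delta\tau_n)^{-\alpha}$, so that
\[
a_n^{(n,\alpha)}-a_{n-1}^{(n,\alpha)}\ge\Big(\tfrac{1}{1-\alpha}-1\Big)(\Delta\tau_n)^{-\alpha}=\frac{\alpha}{1-\alpha}(\Delta\tau_n)^{-\alpha}.
\]

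It then remains to convert $(\Delta\tau_n)^{-\alpha}$ into an $M^{\alpha}$ lower bound. For this I would apply the mean value theorem to $\tau_n=T(n/M)^{\gamma}$ to write $\Delta\tau_n=T\gamma\,\xi^{\gamma-1}/M$ with $\xi\in((n-1)/M,\,n/M)\subset(0,1)$; since $\gamma\ge1$ forces $\xi^{\gamma-1}\le1$, this gives the uniform spacing bound $\Delta\tau_n\le T\gamma/M$ and hence $(\Delta\tau_n)^{-\alpha}\ge(T\gamma)^{-\alpha}M^{\alpha}$. Combining the two displays yields the claim with $c_1=\frac{\alpha}{1-\alpha}(T\gamma)^{-\alpha}$. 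The one point I would take care to verify is that the spacing bound holds uniformly over all $n=2,\dots,M$, which the crude estimate $\xi^{\gamma-1}\le1$ handles precisely because $\xi\le1$ for every such $n$.
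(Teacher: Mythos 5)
Your proof is correct, and each of the three steps checks out. A preliminary remark: the paper itself offers no proof of this lemma --- it is imported verbatim from \cite{Shen2018} --- so the only meaningful comparison is with that reference, whose argument is essentially the same mean-value/monotone-integrand reasoning you employ. Concretely: the averaging representation $a_k^{(n,\alpha)}=\frac{1}{\Delta\tau_k}\int_{\tau_{k-1}}^{\tau_k}(\tau_n-s)^{-\alpha}\,\mathrm{d}s$ together with the strict increase of $s\mapsto(\tau_n-s)^{-\alpha}$ gives (\ref{615}), and comparing the two adjacent averages through the common value $(\tau_n-\tau_k)^{-\alpha}$ handles the integrable singularity in the last subinterval without extra care; the pointwise strict decrease in $n$ of the integrand on $[0,\tau_1]$ plus the bound by its minimum at $s=0$ gives (\ref{615BB}); and the exact value $a_n^{(n,\alpha)}=(1-\alpha)^{-1}(\Delta\tau_n)^{-\alpha}$ combined with $a_{n-1}^{(n,\alpha)}\le(\Delta\tau_n)^{-\alpha}$ and the mean-value bound $\Delta\tau_n\le T\gamma/M$ (uniform in $n$ since $\xi\le1$ and $\gamma\ge1$) yields the final claim with $c_1=\frac{\alpha}{1-\alpha}(T\gamma)^{-\alpha}$. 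Two cosmetic points only: your $c_1$ necessarily depends on the grading exponent $\gamma$, so the paper's blanket statement that the $c_j$ are independent of mesh parameters should be read as ``independent of $M$ and $N$''; and the superscript $N$ in (\ref{615BB}) is evidently a typo for $M$, which your argument covers for all $n\le M$.
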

%%%%%%%%%%%%%%%%%%
The error of such an approximation (\ref{eq2.7}) can be evaluated.
\begin{lemma}
{\it Suppose $|v''(\tau)|\leqslant c_0\tau^{\alpha-2},~0<\tau\leqslant T.$ Then there exists a constant $C$ such that
\begin{equation}
\left|\mathcal{R}^{n}\right| = \left|{}_{0}^{C}\mathbb{D}^{\alpha,\lambda}_{\tau}v(\tau_n)
-\mathcal{D}^{\alpha,\lambda}_{\tau} v(\tau_n)\right|\le C n^{-\min{\{\gamma(1+\alpha), 2-\alpha\}}},\quad n= 1, 2,\ldots, M.
\label{eq2.9x}
\end{equation}}
\label{le1}
\vspace{-6mm}
\end{lemma}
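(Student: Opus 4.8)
The plan is to exploit the exponential substitution $u(\tau)=e^{\lambda\tau}v(\tau)$ already used in (\ref{eq2.7}), under which the tempered operator factorises as ${}^{C}_0\mathbb{D}^{\alpha,\lambda}_{\tau}v(\tau_n)=e^{-\lambda\tau_n}\,{}^{C}D^{\alpha}u(\tau_n)$, where ${}^{C}D^{\alpha}u(\tau_n):=\frac{1}{\Gamma(1-\alpha)}\int_0^{\tau_n}(\tau_n-\eta)^{-\alpha}u'(\eta)\,d\eta$, and its discretisation as $\mathcal{D}^{\alpha,\lambda}_{\tau}v(\tau_n)=e^{-\lambda\tau_n}\mathcal{D}^{\alpha}_{\tau}u(\tau_n)$. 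Hence $\mathcal{R}^n=e^{-\lambda\tau_n}\big[{}^{C}D^{\alpha}u(\tau_n)-\mathcal{D}^{\alpha}_{\tau}u(\tau_n)\big]$, and since $e^{-\lambda\tau_n}\le 1$ it suffices to bound the consistency error of the \emph{untempered} nonuniform L1 formula for $u$. First I would transfer the regularity: from $|v''(\tau)|\le c_0\tau^{\alpha-2}$, integrating from $\tau=T$ (where $v,v'$ are bounded) gives $|v'(\tau)|\le C\tau^{\alpha-1}$ and $|v(\tau)-v(0)|\le C\tau^{\alpha}$; since $e^{\lambda\tau}$ and its derivatives are bounded on the finite interval $[0,T]$, Leibniz's rule and the inequalities $\tau^{\alpha-1}\le T\tau^{\alpha-2}$, $1\le T^{2-\alpha}\tau^{\alpha-2}$ yield $|u''(\tau)|\le C\tau^{\alpha-2}$ and $|u'(\tau)|\le C\tau^{\alpha-1}$, so $u$ inherits exactly the singular profile of $v$.

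Next I would write the error as a sum of local contributions, $\mathcal{R}^n=\frac{e^{-\lambda\tau_n}}{\Gamma(1-\alpha)}\sum_{k=1}^{n}R_k^n$ with $R_k^n=\int_{\tau_{k-1}}^{\tau_k}(\tau_n-\eta)^{-\alpha}\big[u'(\eta)-(\Pi_{1,k}u)'(\eta)\big]\,d\eta$, and split the sum into three regimes handled by different devices. For the last interval $k=n$ (kernel singular at the upper limit, so integration by parts is counterproductive) I use the mean-value form $u'(\eta)-(\Pi_{1,n}u)'(\eta)=\int u''$ with $|u''|\le C\tau_{n-1}^{\alpha-2}$ and the integrability of $(\tau_n-\eta)^{-\alpha}$ on $[\tau_{n-1},\tau_n]$; this also disposes of the degenerate case $n=1$, where the bound is $O(1)$, matching $n^{-\min\{\gamma(1+\alpha),2-\alpha\}}=1$. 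For the regular intervals $2\le k\le n-1$ I integrate by parts to move the derivative onto the interpolation residual $w_k:=u-\Pi_{1,k}u$, which vanishes at both nodes, so the boundary terms drop and $R_k^n=-\alpha\int_{\tau_{k-1}}^{\tau_k}(\tau_n-\eta)^{-\alpha-1}w_k(\eta)\,d\eta$; then $|w_k(\eta)|\le C(\Delta\tau_k)^2\tau_{k-1}^{\alpha-2}$. For the first interval $k=1$, where $u''$ is not integrable, the same integration by parts is the crucial step, but now I bound the residual directly by $|w_1(\eta)|\le C\eta^{\alpha}$ (from $|u(\eta)-u(0)|\le C\eta^{\alpha}$ and $\eta\,\tau_1^{\alpha-1}\le\eta^{\alpha}$), which captures the cancellation a crude triangle inequality would lose.

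The exponents then emerge from the mesh scaling $\tau_k\asymp TM^{-\gamma}k^{\gamma}$, $\Delta\tau_k\asymp T\gamma M^{-\gamma}k^{\gamma-1}$ and the gap bound $\tau_n-\tau_k\ge c\,TM^{-\gamma}n^{\gamma}$ valid for $k\le n-1$. For the first interval, $|R_1^n|\le C(\tau_n-\tau_1)^{-\alpha-1}\tau_1^{1+\alpha}\le Cn^{-\gamma(1+\alpha)}$, giving the first term of the minimum; the last-interval and regular-interval contributions, summed against the singular kernel, produce the classical $O(n^{-(2-\alpha)})$ term, whence $|\mathcal{R}^n|\le Cn^{-\min\{\gamma(1+\alpha),\,2-\alpha\}}$. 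I expect the main obstacle to be the regular-interval summation $\sum_{k=2}^{n-1}(\Delta\tau_k)^2\tau_{k-1}^{\alpha-2}\int_{\tau_{k-1}}^{\tau_k}(\tau_n-\eta)^{-\alpha-1}\,d\eta$: it must be compared with an integral and estimated uniformly in $n$ so that no spurious logarithmic factor appears and no part of the $2-\alpha$ rate is lost, while the mesh-dependent powers of $M$ and $k$ collapse into a single power of $n$. This is exactly the technical core established for the pure-Caputo case in \cite{Stynes17,Shen2018}, which I would adapt verbatim, the only new ingredients being the harmless tempering factor $e^{-\lambda\tau_n}\le1$ and the verification that $u$ shares the singularity bounds of $v$.
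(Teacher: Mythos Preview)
Your proposal is correct and follows essentially the same approach as the paper: reduce to the untempered case via $u=e^{\lambda\tau}v$ and the factorisation $\mathcal{R}^n=e^{-\lambda\tau_n}\big[{}^{C}_{0}\mathbb{D}^{\alpha}_{\tau}u(\tau_n)-\mathcal{D}^{\alpha}_{\tau}u(\tau_n)\big]$, verify that $u$ inherits the bound $|u''(\tau)|\le\tilde c_0\,\tau^{\alpha-2}$, and then invoke the known graded-mesh L1 consistency estimate. The paper simply cites \cite[Lemma~2.1 and Eq.~(2.3)]{Shen2018} for that last step, whereas you go further and sketch its proof (the three-regime splitting with integration by parts on the interior intervals); your closing remark already identifies this coincidence correctly.
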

\vspace{-2mm}
\begin{proof} We still recall $u(\tau) = e^{\lambda \tau}v(\tau)$. If $|v''(\tau)|<c_0\tau^{\alpha-2}$,
then it is not hard to verify that $|u''(\tau)| < \tilde{c}_0\tau^{\alpha-2}$. Moreover,
\begin{equation}
\vspace{-2mm}
\begin{split}
\left|{}_{0}^{C}\mathbb{D}^{\alpha,\lambda}_{\tau}v(\tau_n)-\mathcal{D}^{\alpha,\lambda}_{\tau}
v(\tau_n)\right|
&=\Bigg|\Big[e^{-\lambda\tau}\cdot{}_{0}^{C}\mathbb{D}^{\alpha}_{\tau}u(\tau)\Big]\Big|_{\tau = \tau_n}
- e^{-\lambda\tau_n}\cdot\mathcal{D}^{\alpha}_{\tau}u(\tau_n)\Bigg|\\
& \leq \left|e^{-\lambda \tau_n}\right|\cdot\Big|{}^{C}_{0}\mathbb{D}^{\alpha}_{\tau}u(\tau_n) -
\mathcal{D}^{\alpha}_{\tau}u(\tau_n)\Big|\\
& \leq \Big|{}^{C}_{0}\mathbb{D}^{\alpha}_{\tau}u(\tau_n) - \mathcal{D}^{\alpha}_{\tau}u(\tau_n)\Big|,
\end{split}
\label{eq2.10x}
\end{equation}
where ${}^{C}_{0}\mathbb{D}^{\alpha}_{\tau}u(\tau)$ is the $\alpha$-order Caputo derivative of
a function $u(\tau)$, cf. \cite[Eq. (1.4)]{Shen2018}.

Furthermore, with the help of the estimate in \cite[Lemma 2.1 and Eq. (2.3)]{Shen2018}, it gives
\begin{equation}
\Big|{}^{C}_{0}\mathbb{D}^{\alpha}_{\tau}u(\tau_n) - \mathcal{D}^{\alpha}_{\tau}u(\tau_n)\Big|
\leq Cn^{-\min\{\gamma(1+\alpha),2-\alpha\}},\quad n = 1,2,\ldots,N,
\end{equation}
and insert it into the inequality (\ref{eq2.10x}) to obtain the desired error estimate (\ref{eq2.9x}).
\end{proof}
%
%
%{}^{C}_0^{\alpha,\lambda}_{\tau}v(\tau_n) &
%= \mathcal{D}^{\alpha,\lambda}_{\tau}v^{n} + \mathcal{R}^{n} = \frac{e^{-\lambda \tau_n}}{\Gamma(1-\alpha)}\sum^{n}_{k= 1}a_k^{(n,\alpha)}(u^k-u^{k-1})+\mathcal{R}^n\\
%& = \frac{1}{\Gamma(1-\alpha)}\sum^{n}_{ k=1}a_k^{(n,\alpha)}\left(e^{\lambda(\tau_{k}-\tau_n)}v^{k} - e^{\lambda(\tau_{k-1}-\tau_n)}v^{k-1}\right) + \mathcal{R}^n\\
%& = \frac{1}{\Gamma(1-\alpha)}\big[a^{(n,\alpha)}_nv^n - \sum^{n-1}_{k= 1}
%(a^{(n,\alpha)}_{k+1} -a_k^{(n,\alpha)})e^{\lambda(\tau_k-\tau_n)}v^k-a_1^{(n,\alpha)}e^{\lambda(\tau_0-\tau_n)}v^0\big]\\
%&~+^n

Next, we consider Eq. (\ref{eq2.1}) at the point $(x,\tau) = (x_i,\tau_n)$,
then it follows that
\begin{equation}
{}^{C}_0\mathbb{D}^{\alpha,\lambda}_{\tau}v(x_i,\tau_n)
= \frac{1}{2}\sigma^2\frac{\partial^2 v(x_i,\tau_n)}{\partial
x^2} - qv(x_i,\tau_n) + g(x_i,\tau_n),
\label{point-problem}
\end{equation}
Let $V$ be a grid function defined by $V^{n}_i:= v(x_i, t_n)$ with $0 \leq i \leq N,~0 \leq n \leq M$.
Using this notation and recalling relations (\ref{eq2.7})-(\ref{191}) along
with Lemma \ref{le1}, we can write Eq. (\ref{eq2.1}) at the grid points $(x_i, t_n)$ as follows
\begin{equation}
\begin{cases}
\mathcal{H}_x\Big(\mathcal{D}^{\alpha,\lambda}_{\tau}V^{n}_i\Big) = \frac{1}{2}\sigma^2\delta^{2}_xV^{n}_i
- q\mathcal{H}_xV^{n}_i + \mathcal{H}_xg^{n}_i + \mathcal{R}^{n}_i,& 1\leq i\leq N-1,~1\leq n\leq M,\\
V^{0}_i = \sigma(x_i),& 1\leq i\leq N-1,\\
V^{n}_0 = V^{n}_N = 0,& 0\leq n\leq M,
\end{cases}
\label{direct-dis}
\end{equation}
where the terms $\{\mathcal{R}^{n}_i\}$ are small and satisfy the inequality
\begin{equation*}
\left|\mathcal{R}^{n}_i\right|\leq c_2\big(n^{-\min\{\gamma(1+\alpha),2-\alpha\}} + h^4\big),\quad 1\leq i\leq N-1,~1\leq n\leq M.
\end{equation*}
Now, we omit the small error items and arrive at the following difference schemes
\begin{equation}
\begin{cases}
\mathcal{H}_x\Big(\mathcal{D}^{\alpha,\lambda}_{\tau}v^{n}_i\Big) = \frac{1}{2}\sigma^2\delta^{2}_xv^{n}_i
- q\mathcal{H}_xv^{n}_i + \mathcal{H}_xg^{n}_i,& 1\leq i\leq N-1,~1\leq n\leq M,\\
v^{0}_i = \sigma(x_i),& 1\leq i\leq N-1,\\
v^{n}_0 = v^{n}_N = 0,& 0\leq n\leq M,
\end{cases}
\label{fourth-order-scheme}
\end{equation}
which is called the direct scheme (DS).
%%%%%%%%%%%%%%%%%%%%%%%%%%%%%%%%%%%%%%%%%%
\subsection{Stability and convergence of the compact difference scheme}
In this subsection, we present the stability and convergence analysis of the direct difference scheme \eqref{fourth-order-scheme}.
\begin{theorem}
Suppose $\{v_i^n\,|\, 0\leqslant i \leqslant N,~0\leq n\leq M\}$ is the solution of the difference scheme \eqref{fourth-order-scheme}.
Then, it holds
\begin{align}
%\label{65}
\|v^k\|_{\infty}\leqslant \|v^0\|_{\infty}+ \Gamma(1-\alpha)\max \limits_{1\leq m \leq k} \frac{\|g^m\|_\infty}{a_1^{(m,\alpha)}},\quad k=1, 2, \cdots, M,
\end{align}
where $\|g^m\|_\infty=\max\limits_{1\le i\le N-1}|g_i^m|$.
\label{thm1}
\end{theorem}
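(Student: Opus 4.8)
The plan is to prove the bound by a discrete maximum principle combined with strong induction on the time level $n$, the engine of the argument being the coefficient properties collected in Lemma~\ref{le2}. Fix $n$ and let $i_0$ be an interior node at which $|v_i^n|$ attains its maximum $\|v^n\|_\infty$ over $1\le i\le N-1$; replacing $v$ by $-v$ if necessary, I would assume $v_{i_0}^n=\|v^n\|_\infty\ge 0$. Writing out $\mathcal D^{\alpha,\lambda}_\tau v^n_{i_0}$ through \eqref{eq2.7} and inserting it into \eqref{fourth-order-scheme}, I would then move the history sum and the forcing to the right, leaving on the left the leading coefficient $a_n^{(n,\alpha)}v^n_{i_0}$ together with the spatial operator $\tfrac12\sigma^2\delta_x^2$ and the reaction $-q\mathcal H_x$.

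At the extremal node the diffusion contribution $\tfrac12\sigma^2\delta_x^2 v^n_{i_0}\le 0$ and the reaction contribution have a sign that lets them be discarded (this is where the compact operator must be handled; see below), leaving an inequality of the form
\[
a_n^{(n,\alpha)}\,\|v^n\|_\infty \le \sum_{k=1}^{n-1}\big(a_{k+1}^{(n,\alpha)}-a_k^{(n,\alpha)}\big)e^{\lambda(\tau_k-\tau_n)}\|v^k\|_\infty + a_1^{(n,\alpha)}e^{\lambda(\tau_0-\tau_n)}\|v^0\|_\infty + \Gamma(1-\alpha)\|g^n\|_\infty .
\]
Here I use that the weights $a_1^{(n,\alpha)}$ and $a_{k+1}^{(n,\alpha)}-a_k^{(n,\alpha)}$ are positive by \eqref{615}, that $e^{\lambda(\tau_k-\tau_n)}\le 1$ since $\lambda\ge 0$ and $\tau_k\le\tau_n$, and that each nodal value is bounded by the corresponding $\|v^k\|_\infty$. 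The averaging by $\mathcal H_x$ on the history and forcing is harmless precisely because $\mathcal H_x$ carries positive weights summing to one, so $\mathcal H_x g^n_{i_0}\le\|g^n\|_\infty$ as well. The telescoping identity $a_1^{(n,\alpha)}+\sum_{k=1}^{n-1}(a_{k+1}^{(n,\alpha)}-a_k^{(n,\alpha)})=a_n^{(n,\alpha)}$ records that the total weight on the right equals the coefficient on the left.

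With this inequality in hand I would close the estimate by strong induction with the monotone target $\Lambda_n:=\|v^0\|_\infty+\Gamma(1-\alpha)\max_{1\le m\le n}\|g^m\|_\infty/a_1^{(m,\alpha)}$. Substituting the induction hypothesis $\|v^k\|_\infty\le\Lambda_k\le\Lambda_n$ for $k<n$ and treating the initial-data and forcing parts separately, the $\|v^0\|_\infty$ contributions recombine, via the telescoping identity, into $a_n^{(n,\alpha)}\|v^0\|_\infty$, while the forcing contributions are bounded using monotonicity of the partial maxima together with the absorption $\Gamma(1-\alpha)\|g^n\|_\infty\le a_1^{(n,\alpha)}\big(\Lambda_n-\|v^0\|_\infty\big)$, which follows from the definition of $\Lambda_n$. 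After dividing by $a_n^{(n,\alpha)}$ this gives exactly $\|v^n\|_\infty\le\Lambda_n$, completing the induction.

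The step I expect to be the main obstacle is justifying the discrete maximum principle in the presence of the fourth-order compact operator $\mathcal H_x$: because $\mathcal H_x$ acts on the time-fractional term and on the reaction term, the leading spatial operator is $\big(\tfrac{a_n^{(n,\alpha)}}{\Gamma(1-\alpha)}+q\big)\mathcal H_x-\tfrac12\sigma^2\delta_x^2$ rather than a pure second difference, and one must verify that evaluating it at the maximal node still produces the lower bound $\tfrac{a_n^{(n,\alpha)}}{\Gamma(1-\alpha)}\|v^n\|_\infty$ used above. I would address this by examining the sign pattern of the resulting tridiagonal stencil: its diagonal is positive, its row sum equals $\tfrac{a_n^{(n,\alpha)}}{\Gamma(1-\alpha)}+q$ (since $\mathcal H_x$ has unit row sum and $\delta_x^2$ has zero row sum), and the coupling to the neighbouring values $v^n_{i_0\pm1}\le v^n_{i_0}$ through the off-diagonal entries must be controlled so that the extremal-node evaluation retains the required monotone structure. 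Establishing this monotonicity of the spatial stencil is the delicate point on which the whole argument rests.
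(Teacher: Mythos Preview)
Your plan is the paper's proof: evaluate the scheme at the interior index $i_0$ where $|v_i^n|$ is maximal, bound the right-hand side using the monotonicity \eqref{615} of the L1 weights together with the positive-averaging property of $\mathcal H_x$ on the history and forcing terms, and close by strong induction via the telescoping identity you wrote down. The obstacle you single out is exactly where the paper is shortest: after the rearrangement \eqref{41} it invokes the one-line claim $\|\mathcal H_xv^n\|_\infty=\|v^n\|_\infty$ to replace $\mathcal H_xv^n_{i_0}$ by $\|v^n\|_\infty$ on the left-hand side, bypassing any stencil-sign analysis, so your instinct that this is the delicate point is well placed.
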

%%%%%%%%%%%%%%%%%%%%%%%%%%%%%
\begin{proof} Let $i_0~(1\le i_0\le N-1)$ be an index such that $|v_{i_0}^n|=\|v^n\|_{\infty}.$
Rewriting the first equality of Eq. \eqref{fourth-order-scheme} in the form
\begin{equation}
\begin{split}
\Bigg[\frac{1}{\Gamma(1-\alpha)}a_n^{(n,\alpha)}+q\Bigg]\mathcal{H}_{x}v_i^n+\frac{\sigma^2}{h^2}v_i^n
&=\frac{\sigma^2}{2h^2}(v_{i-1}^n+v_{i+1}^n)+\frac{1}{\Gamma(1-\alpha)}
\Bigg[\sum_{k=1}^{n-1}e^{-\lambda(\tau_n-\tau_k)}\cdot\\
&~~~~(a_{k+1}^{(n,\alpha)}-a_k^{(n,\alpha)})\mathcal{H}_{x}v_i^k
+e^{-\lambda(\tau_n-\tau_0)}a_1^{(n,\alpha)}\mathcal{H}_{x}v_i^0\Bigg]\\
&~~~+\mathcal{H}_{x}g_i^n,\quad 1 \leqslant i
\leqslant N-1,\quad 1\leqslant n \leqslant M,
\end{split}
\label{41}
\end{equation}
we set $i=i_0$ in Eq. (\ref{41}) and take the absolute value on the both sides of the equation obtained so that
\begin{equation*}
\begin{split}
\Big[\frac{1}{\Gamma(1-\alpha)}a_n^{(n,\alpha)}+q\Big]\|v^n\|_{\infty}+
\frac{\sigma^2}{h^2}\|v^n\|_{\infty}&\leqslant
\frac{\sigma^2}{h^2}\|v^n\|_{\infty}+\frac{1}{\Gamma(1-\alpha)}\Big[\sum_{k=1}^{n-1}(a_{k+1}^{(n,\alpha)}-a_k^{(n,\alpha)})\|v^k\|_{\infty}
\\
&~~~+a_1^{(n,\alpha)}\|v^0\|_{\infty}\Big]+\|g^n\|_{\infty},\quad
1\leqslant n \leqslant M
\end{split}
\end{equation*}
with the help of $\|v^n\|_{\infty}=\|\mathcal{H}_{x}v^n\|_{\infty}$. This inequality can be rearranged and written as follows,
\begin{equation}
\begin{split}
%                                                                                         g_j \\
%                                                                                         f_j \\
%
%
a_n^{(n,\alpha)}\|v^n\|_{\infty}&\leqslant \sum_{k=1}^{n-1}(a_{k+1}^{(n,\alpha)}-a_k^{(n,\alpha)})\|v^k\|_{\infty}
+a_1^{(n,\alpha)}\Big(\|v^0\|_{\infty}\\
&~~~+\frac{\Gamma(1-\alpha)}{a_1^{(n,\alpha)}}\|g^n\|_{\infty}\Big), \quad 1 \leqslant n \leqslant M.
\end{split}
\label{64}
\end{equation}
Next, we apply mathematical induction to prove (\ref{65}) is valid. In fact, setting $n=1$ in (\ref{64}) and noticing  $a_1^{(1,\alpha)}>0,$ we obtain
$$\|v^1\|_\infty \leqslant \|v^0\|_\infty+\frac{\Gamma(1-\alpha)}{a_1^{(1,\alpha)}}\|g^1\|_\infty.$$
Thus (\ref{65}) is valid for $k=1.$ Assume that the inequality (\ref{65}) holds for $1 \leqslant k \leqslant n-1$, i.e.,
\begin{align*}
 \|v^k\|_{\infty} \leqslant \|v^0\|_{\infty} +\Gamma(1-\alpha)\max_{1\leq m \leq k} \frac{\|g^m\|_{\infty}}{a_1^{(m,\alpha)}},\quad k=1,2,\cdots, n-1.
\end{align*}
Then, from (\ref{64}), we have
\begin{align}
a_n^{(n,\alpha)}\|v^n\|_{\infty} &\leqslant \sum_{k=1}^{n-1}(a_{k+1}^{(n,\alpha)}-a_k^{(n,\alpha)})\big(\|v^0\|_{\infty}+\Gamma(1-\alpha)\max_{1 \leqslant m \leqslant
k}\frac{\|g^m\|_\infty}{a_1^{(m,\alpha)}}\big)+a_1^{(n,\alpha)}\big[\|v^0\|_{\infty}\nn\\
&+\Gamma(1-\alpha)\frac{\|g^n\|_{\infty}}{a_1^{(n,\alpha)}}\big] \nn\\
&\leqslant \big[\sum_{k=1}^{n-1}(a_{k+1}^{(n,\alpha)}-a_k^{(n,\alpha)})+a_1^{(n,\alpha)}\big]\big(\|v^0\|_\infty+\Gamma(1-\alpha)\max_{1 \leq m \leq
n}\frac{\|g^m\|_\infty}{a_1^{(m,\alpha)}}\big)\nn\\
& \leqslant a_n^{(n,\alpha)}[\|v^0\|_\infty+\Gamma(1-\alpha)\max_{1 \leq m\leq n}\frac{\|g^m\|_\infty}{a_1^{(m,\alpha)}}].
\end{align}
Noticing $a_n^{(n,\alpha)}\neq 0,$ we obtain
\begin{align}
\|v^n\|_\infty \leqslant \|v^0\|_\infty +\Gamma(1-\alpha)\max_{1\leq m \leq n}\frac{\|g^m\|_\infty}{a_1^{(m,\alpha)}}.
\end{align}
Therefore (\ref{65}) is valid for $k=n$. This completes the proof.\end{proof}

This theorem shows that the direct difference scheme (\ref{fourth-order-scheme}) is stable to the initial value
$\sigma$ and the right-hand side term $g$. Now, we can present the following convergence analysis of this difference
scheme.
\begin{theorem}Suppose $\{V_i^n\,|\, 0\le i\le N,~0\le n\le M\}$ is the solution of the problem \eqref{eq2.1} and $\{v_i^n\,|\,0\le i\le N,~0\le n\le M\}$ is the solution of  the difference
scheme \eqref{fourth-order-scheme}. Let
 $$e_i^n=V_i^n-v_i^n,\quad 0\leqslant i \leqslant N,~0\leqslant n \leqslant M.$$
then  \be \|e^n\|_\infty \leqslant \Gamma(1-\alpha)c_2\Big( M^{-\min\{\gamma\alpha, 2-\alpha\}}+h^4\Big),\quad 1\le n\le M.\nn
\vspace{-4mm}\ee
\label{thm4}
\end{theorem}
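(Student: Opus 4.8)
The plan is to reduce the convergence estimate to the stability result of Theorem \ref{thm1}, applied to the error rather than to the solution itself. First I would subtract the direct scheme \eqref{fourth-order-scheme} from the consistency identity \eqref{direct-dis} satisfied by the exact grid values $V_i^n$. Because the two share the operators $\mathcal{H}_x,\ \delta_x^2$ and the same data $\sigma(x_i),\ g_i^n$, the error $e_i^n=V_i^n-v_i^n$ satisfies
\begin{equation*}
\mathcal{H}_x\big(\mathcal{D}^{\alpha,\lambda}_{\tau}e_i^n\big)=\tfrac{1}{2}\sigma^2\delta_x^2 e_i^n-q\,\mathcal{H}_x e_i^n+\mathcal{R}_i^n,\qquad e_i^0=0,\qquad e_0^n=e_N^n=0 .
\end{equation*}
This is structurally identical to \eqref{fourth-order-scheme} with vanishing initial datum and with the truncation term $\mathcal{R}_i^n$ in the role of the source $\mathcal{H}_x g_i^n$. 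I would therefore invoke Theorem \ref{thm1} verbatim; since $e^0=0$, its conclusion collapses to
\begin{equation*}
\|e^n\|_\infty\leqslant \Gamma(1-\alpha)\max_{1\leqslant m\leqslant n}\frac{\|\mathcal{R}^m\|_\infty}{a_1^{(m,\alpha)}}.
\end{equation*}

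The remaining and principal task is to turn the local truncation bound $|\mathcal{R}_i^m|\leqslant c_2\big(m^{-\min\{\gamma(1+\alpha),2-\alpha\}}+h^4\big)$ into the announced global rate $M^{-\min\{\gamma\alpha,2-\alpha\}}+h^4$; this is precisely where the grading exponent shifts. The key quantitative input is a sharp lower bound on $a_1^{(m,\alpha)}$. Using $\tau_n=T(n/M)^\gamma$ (so $\Delta\tau_1=\tau_1$) in \eqref{191} together with the mean value theorem for $s\mapsto s^{1-\alpha}$, I would show $a_1^{(m,\alpha)}=\xi^{-\alpha}>\tau_m^{-\alpha}=T^{-\alpha}(m/M)^{-\gamma\alpha}$ for some $\xi\in(\tau_m-\tau_1,\tau_m)$, hence $1/a_1^{(m,\alpha)}\leqslant T^\alpha(m/M)^{\gamma\alpha}$. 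The spatial part is then immediate: since $(m/M)^{\gamma\alpha}\leqslant 1$ one gets $h^4/a_1^{(m,\alpha)}\leqslant T^\alpha h^4$, consistent with the coarser bound $a_1^{(m,\alpha)}\geqslant T^{-\alpha}$ from Lemma \ref{le2}.

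For the temporal part I would insert this bound to obtain
\begin{equation*}
\frac{m^{-\min\{\gamma(1+\alpha),2-\alpha\}}}{a_1^{(m,\alpha)}}\leqslant T^\alpha M^{-\gamma\alpha}\,m^{\,\gamma\alpha-\min\{\gamma(1+\alpha),2-\alpha\}},
\end{equation*}
and then examine the sign of the exponent of $m$. If $\gamma\alpha\leqslant 2-\alpha$, then $\gamma\alpha\leqslant\min\{\gamma(1+\alpha),2-\alpha\}$, the exponent is nonpositive, the discrete maximum over $m$ sits at $m=1$, and the bound is $T^\alpha M^{-\gamma\alpha}$; if $\gamma\alpha>2-\alpha$, the minimum equals $2-\alpha$, the exponent $\gamma\alpha-(2-\alpha)$ is positive, the maximum sits at the largest index $m=M$, and the bound is $T^\alpha M^{-(2-\alpha)}$. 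In both cases the surviving exponent is exactly $\min\{\gamma\alpha,2-\alpha\}$. Combining the spatial and temporal contributions and absorbing $T^\alpha$ into the constant would give $\max_m\|\mathcal{R}^m\|_\infty/a_1^{(m,\alpha)}\leqslant c_2\big(M^{-\min\{\gamma\alpha,2-\alpha\}}+h^4\big)$, which fed into the stability estimate finishes the proof. The hard part is this final case distinction: one must correctly locate the discrete maximum of $m^{\,\gamma\alpha-\min\{\cdots\}}$, because crudely bounding each $m$ by $M$ (or by $1$) would destroy the sharp grading exponent that distinguishes $\gamma\alpha$ from $\gamma(1+\alpha)$.
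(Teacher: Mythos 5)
Your proposal is correct and follows essentially the same route as the paper: subtract the consistency identity from the scheme to get the error equations with zero initial data, apply the stability estimate of Theorem \ref{thm1} with $\mathcal{R}_i^n$ in the role of the source, and convert the local truncation bound using $1/a_1^{(m,\alpha)}\leqslant \tau_m^{\alpha}=T^{\alpha}(m/M)^{\gamma\alpha}$. The only difference is that you carry out explicitly the final case distinction on the sign of the exponent $\gamma\alpha-\min\{\gamma(1+\alpha),2-\alpha\}$ (locating the maximum at $m=1$ or $m=M$), whereas the paper delegates exactly that step to the cited proof of Theorem 4.2 in the Shen--Sun--Du reference; your analysis of it is correct.
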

%%%%%%%%%%%%%%%%%%%%%%%%
\begin{proof}
Subtracting Eq. \eqref{direct-dis} from the corresponding Eq. (\ref{fourth-order-scheme})
leads to the following error equations:
\begin{equation}
\begin{cases}
\mathcal{H}_x\left(\mathcal{D}^{\alpha,\lambda}_{\tau}e^{n}_i\right) = \frac{1}{2}\sigma^2\delta^{2}_xe^{n}_i
- q\mathcal{H}_xe^{n}_i + \mathcal{R}^{n}_i,& 1\leq i\leq N-1,~1\leq n\leq M,\\
e^{0}_i = 0,& 1\leq i\leq N-1,\\
e^{n}_0 = e^{n}_N = 0,& 0\leq n\leq M.
\end{cases}
\label{4-order}
\end{equation}
Applying Theorem \ref{thm1} to the above error equations yields
\begin{equation*}
\|e^n\|_\infty \leq \Gamma(1-\alpha)\max_{1\leq m \leq n}\frac{\|\mathcal{R}^m\|_\infty}{a_1^{(m,\alpha)}},\quad n = 1,2,\ldots,M.
\end{equation*}
In addition, we further have
\begin{equation*}
\begin{split}
\|e^n\|_\infty &\leq \Gamma(1-\alpha)c_2\max_{1\leq m\leq n}\frac{1}{a_1^{(m,\alpha)}}
\left(m^{-\min\{\gamma(1 + \alpha),2 - \alpha\}} + h^4\right)\\
& \leq \Gamma(1-\alpha)c_2\max_{1\leq m\leq n}\tau^{\alpha}_m\left(m^{-\min\{\gamma(1 + \alpha),2 - \alpha\}} + h^4\right)\\
& \leq \Gamma(1-\alpha)c_2\left[\left(\frac{T}{M^{\gamma}}\right)^{\alpha}\max_{1\leq m\leq n}
m^{\gamma\alpha - \min\{\gamma(1 + \alpha),2-\alpha\}} +
\tau^{\alpha}_n h^4\right].
\end{split}
\end{equation*}%|%(\tau),\varphi^{\ast}(\tau)
Then the rest of this proof can be similarly achieved via the proof of \cite[Theorem 4.2]{Shen2018}.
\end{proof}
%\vspace{-4mm}
\section{Fast implementation of the compact difference scheme}
\label{sec3}
%\vspace{-4mm}
For each time level of the scheme (\ref{fourth-order-scheme}), it needs to
solve a tri-diagonal linear system which can be solved
by double-sweep method with computational cost $\mathcal{O}(NM + NM^2)$, so
it is meaningful to reduce the computational complexity; refer to \cite{Yang16,Luchko09}.
However, there are few fast numerical methods for tempered time-fractional
DEs \cite{Guo2019,Gu2021}. Fortunately, we find that the fast SOE approximation of
the Caputo fractional derivative can be extended to approximate the tCFD. In
this section, we first introduce the SOE approximation of
the tCFD, then we can reconstruct a fast and stable difference
method utilized the scheme (\ref{fourth-order-scheme}) and the proposed
fast SOE approximation.
%\vspace{-4mm}
\subsection{The construction of fast compact difference scheme}
%\vspace{-4mm}
\begin{lemma}\rm{(\hspace{-0.2mm}\cite{Jiang17})}
{\it For the given $\alpha \in (0,1)$  and tolerance error $\epsilon,$ cut-off time  restriction ~$\delta$~ and final time $T$, there are one positive integer~$M_{exp},$
positive points $\{s_j\,|\,j=1,2, \cdots, M_{exp}\}$ and corresponding positive weights $\{ w_j\,|\, j=1,2, \cdots, M_{exp}\}$ such that
\begin{align}\label{50}
\Big|\tau^{-\alpha}-\sum_{j=1}^{M_{exp}}w_j e^{-s_j \tau}\Big|\leqslant \epsilon,\quad \forall \tau\in[\delta, T],
\end{align}
where
\begin{align}\label{51}
M_{exp}=\mathcal{O}\Big(\big(\mathrm{log}\frac{1}{\epsilon}\big)\big(\mathrm{loglog}\frac{1}{\epsilon}+\mathrm{log}\frac{T}{\delta}\big)+\big(\mathrm{log}\frac{1}{\delta}\big)
\big(\mathrm{loglog}\frac{1}{\epsilon}+\mathrm{log}\frac{1}{\delta}\big)\Big).
\end{align}}
\label{le3}
\end{lemma}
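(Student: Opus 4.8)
The plan is to reduce the statement to a quadrature estimate for the classical integral representation of the kernel. The starting point is the identity
\[ \tau^{-\alpha}=\frac{1}{\Gamma(\alpha)}\int_{0}^{\infty} s^{\alpha-1}e^{-s\tau}\,\mathrm{d}s, \]
valid for $\alpha\in(0,1)$ and $\tau>0$, which already exhibits $\tau^{-\alpha}$ as a continuous superposition of the decaying exponentials $e^{-s\tau}$. A sum-of-exponentials approximation is then precisely a quadrature rule for this integral whose nodes $\{s_j\}$ and weights $\{w_j\}$ are positive, and positivity comes for free: since $s^{\alpha-1}>0$ and the Gaussian-type weights are positive, any interpolatory rule of Gauss--Legendre or Gauss--Jacobi type on a positive measure produces positive $s_j$ and $w_j$. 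So the entire task is to choose a quadrature whose total error stays below $\epsilon$ uniformly for $\tau\in[\delta,T]$ while keeping the node count as small as \eqref{51}.

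First I would truncate the infinite $s$-axis at both ends. For the large-$s$ tail, since $\tau\ge\delta$ we have $\int_{B}^{\infty}s^{\alpha-1}e^{-s\tau}\,\mathrm{d}s\le\int_{B}^{\infty}s^{\alpha-1}e^{-s\delta}\,\mathrm{d}s$, which falls below $\epsilon$ once $B=\mathcal{O}\big(\delta^{-1}\log(1/\epsilon)\big)$; this is the source of the $\log(1/\delta)$ and $\log(1/\epsilon)$ dependence through the cutoff. For the small-$s$ end one cannot merely truncate because $s^{\alpha-1}$ is singular at the origin, so I would keep the innermost interval and discretize it with a Gauss--Jacobi rule that absorbs the weight $s^{\alpha-1}$ exactly, thereby controlling that contribution without forcing the lower cutoff to be exponentially small. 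The effective lower scale is governed by the largest admissible $\tau\approx T$, so the usable window has dynamic range $\mathcal{O}(T/\delta)$.

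Next I would cover the truncated window by dyadic subintervals $[2^{j},2^{j+1}]$ and apply a fixed-order Gaussian rule on each. On the interior intervals the integrand $s\mapsto s^{\alpha-1}e^{-s\tau}$ is analytic, so the Gauss--Legendre error decays geometrically in the number of nodes, and a budget of $\mathcal{O}(\log(1/\epsilon))$ nodes per interval makes the per-interval error $\lesssim\epsilon$ divided by the number of intervals. The number of dyadic intervals spanning the window is $\mathcal{O}\big(\log(T/\delta)+\log\log(1/\epsilon)\big)$, while the refined near-origin region carrying the $s^{\alpha-1}$ singularity requires its own $\mathcal{O}(\log(1/\delta))$-type budget. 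Summing the local quadrature errors with the two truncation errors yields the uniform bound $\epsilon$ on $[\delta,T]$, and combining the two node budgets with their respective interval counts reproduces exactly the two-term product expression for $M_{exp}$ in \eqref{51}.

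The hard part will be making the per-interval Gauss estimate uniform in $\tau$ simultaneously with the counting that yields the precise complexity. The width of the analyticity strip, and hence the geometric convergence rate of the Gauss rule on $[2^{j},2^{j+1}]$, depends on the product $\tau 2^{j}$, so I must verify that a node count growing only logarithmically in $1/\epsilon$ (independent of $j$ and of $\tau$) still defeats the growing number of intervals for \emph{every} $\tau\in[\delta,T]$, and that the Gauss--Jacobi approximation of the singular innermost piece is $\mathcal{O}(\epsilon)$ within its stated budget. Balancing these three error contributions, the two truncations and the summed quadrature error, against the logarithmic node counts is the technical core; I would carry out this dyadic-partition argument following \cite{Jiang17}.
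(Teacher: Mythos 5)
The paper offers no proof of this lemma at all --- it is quoted directly from \cite{Jiang17} --- and your sketch reproduces precisely the construction used in that reference: the Laplace-type integral representation $\tau^{-\alpha}=\frac{1}{\Gamma(\alpha)}\int_0^\infty s^{\alpha-1}e^{-s\tau}\,\mathrm{d}s$, two-sided truncation with upper cutoff $\mathcal{O}\bigl(\delta^{-1}\log\frac1\epsilon\bigr)$, a Gauss--Jacobi rule absorbing the $s^{\alpha-1}$ singularity on the innermost interval, and composite Gauss--Legendre rules on a dyadic partition of the remaining window, so this is essentially the same (and the correct) approach. One bookkeeping quibble that does not affect validity: the $\log\frac1\delta\bigl(\log\log\frac1\epsilon+\log\frac1\delta\bigr)$ term in \eqref{51} actually comes from the extra nodes required on the large-$s$ dyadic intervals near the truncation point, whereas the near-origin Gauss--Jacobi piece (a single interval of length $\mathcal{O}(1/T)$ on which $e^{-s\tau}$ is entire and bounded) needs only $\mathcal{O}\bigl(\log\frac1\epsilon\bigr)$ nodes, not an $\mathcal{O}\bigl(\log\frac1\delta\bigr)$ budget as you suggest.
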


Next, we use Lemma \ref{le3} to derive a fast algorithm for computing the tCFD on graded meshes.
Let $\delta=(\frac{1}{M})^\gamma T$ and recall $u(\tau) = e^{\lambda\tau}v(\tau)$. Using the linear
interpolation, we have
%}{\partial x^2} - qv(x,\tau) + g(x,, & (x,)\Omega\times(0,T]{,\tauf
%v(x,0) = \sigma(x), & x,\\tau\frac{e^{\lambda \tau_n}f(\tau_n)-e^{\lambda \tau_{n-1}}f(\tau_{n-1})}{\Delta\tau_n}\cdot
%v(x_l,\tau) = v(x_r,) = 0, & (x,\tau)_{\tau = }\frac{\partial(e^{\lambda s} f(s))}{\partial s}
%\end\frac{\partial(e^{\lambda s} f(s))}{ s}% v(x,\tau) \sigma^2\partial^2 v(x,
\begin{equation}
\begin{split}
{}^{C}_0\mathbb{D}^{\alpha,\lambda}_{\tau}v(\tau_n)& =
\frac{e^{-\lambda \tau_n}}{\Gamma(1-\alpha)}\Bigg[\int_0^{\tau_{n-1}} \frac{u'(s)}{(\tau_n-s)^\alpha}\mathrm{d}s +
\int_{\tau_{n-1}}^{\tau_n}\frac{u'(s)}{(\tau_n-s)^\alpha}\mathrm{d}s\Bigg]\\
& \approx \frac{e^{-\lambda \tau_n}}{\Gamma(1-\alpha)}\Bigg[\int_0^{\tau_{n-1}}u'(s)\sum_{j=1}^{M_{exp}}w_j
e^{-s_j(\tau_n-s)}ds+\int_{\tau_{n-1}}^{\tau_n}\frac{(\Pi_{1,n}u)'(s)}{(\tau_n-s)^\alpha}\mathrm{d}s\Bigg]\\
& = \frac{e^{-\lambda \tau_n}}{\Gamma(1-\alpha)}\Bigg[\sum_{j=1}^{M_{exp}}w_jF_j^n+a_n^{(n,\alpha)}
(u(\tau_n)-u(\tau_{n-1}))\Bigg]\left(:=e^{-\lambda \tau_n}\cdot{}^{F}\mathcal{D}^{\alpha}_{\tau}u(\tau_n)\right)\\
& = \frac{e^{-\lambda\tau_n}}{\Gamma(1 - \alpha)}\Bigg[\sum^{M_{exp}}_{j=1}w_jF^{n}_j + a^{(n,\alpha)}_n
(e^{\lambda\tau_n}v(\tau_n) - e^{\lambda\tau_{n-1}}v(\tau_{n-1}))\Bigg]\\
& := {}^F\mathcal{D}^{\alpha,\lambda}_{\tau} f(\tau_n),
\label{197}
\end{split}
\end{equation}
where $F_j^n=\int^{\tau_{n-1}}_0u'(s)e^{-s_j(\tau_n - s)}{\rm d}s =
\int_0^{\tau_{n-1}}[e^{\lambda s}v(s)]'e^{-s_j(\tau_n-s)}\mathrm{d}s$
and it can be evaluated by using a recursive algorithm, i.e.,
\begin{equation}
\begin{split}
F_j^n& = \int_0^{\tau_{n-2}}u'(s)e^{-s_j(\tau_n-s)}\mathrm{d}s+\int_{\tau_{n-2}}^{\tau_{n-1}}u'(s)e^{-s_j(\tau_n-s)}\mathrm{d}s \\
&\approx e^{-s_j\Delta\tau_n}\int_0^{\tau_{n-2}}u'(s)e^{-s_j(\tau_{n-1}-s)}\mathrm{d}s+
\int_{\tau_{n-2}}^{\tau_{n-1}}(\Pi_{1,{n-1}}u)'(s)e^{-s_j(\tau_n-s)}\mathrm{d}s \nn\\
&=e^{-s_j\Delta\tau_n}F_j^{n-1}+B_j^n\big[e^{\lambda \tau_{n-1}}v(\tau_{n-1})-e^{\lambda \tau_{n-2}}v(\tau_{n-2})\big],\quad n=2,3,\cdots,
\label{503}
\end{split}
\end{equation}
%%%%\\
%&\quad~%%\frac{\partial(e^{\lambda s}f(s))}{\partial s}%\frac{\partial(e^{\lambda s}f(s))}{\partial s}\frac{\partial(e^{\lambda s}f(s))}{\partial s}
where
\begin{align}
F_j^1=0,\quad B_j^n=\frac{1}{\Delta\tau_{n-1}}\int_{\tau_{n-2}}^{\tau_{n-1}}e^{-s_j(\tau_n-s)}\mathrm{d}s,\quad 1\leqslant j \leqslant M_{\rm{exp}}.
\label{504}
\end{align}
According to Eqs. (\ref{197})-(\ref{504}), a two-step recursive  formula approximating the tCFD is given as follows
\begin{align}
&{}^F D^{\alpha,\lambda}_{\tau} v(\tau_n)=\frac{e^{-\lambda \tau_n}}{\Gamma(1-\alpha)}
\Bigg\{\sum_{j=1}^{M_{\rm{exp}}}w_jF_j^n+a_n^{(n,\alpha)}\big[e^{\lambda \tau_n}v(\tau_n)
-e^{\lambda \tau_{n-1}}v(\tau_{n-1})\big]\Bigg\},\quad n\geq 1,\label{617}\\
&F_j^1=0,\quad F_j^n=e^{-s_j\Delta\tau_n}F_j^{n-1}+B_j^n\left[e^{\lambda \tau_{n-1}}v(\tau_{n-1})-
e^{\lambda \tau_{n-2}}v(\tau_{n-2})\right],\quad n\geq 2.\label{618}
%&\label{619}
\end{align}
It is not hard to follow the idea in \cite{Shen2018} for rewriting Eq. (\ref{617}) as follows,
\begin{align}
%\begin{split}
{}^F\mathcal{D}^{\alpha,\lambda}_{\tau}v(\tau_n) & =
\frac{e^{-\lambda \tau_n}}{\Gamma(1-\alpha)}\Bigg[\sum_{k=1}^{n-1}
\int_{\tau_{k-1}}^{\tau_k}(\Pi_{1,k}u)'(s)\sum_{j=1}^{M_{exp}}w_j e^{-s_j
(\tau_n-s)}ds +\int_{\tau_{n-1}}^{\tau_n}\frac{(\Pi_{1,n}u)'(s)}{(\tau_n-s)^\alpha}\mathrm{d}s\Bigg]\nn\\
& = \frac{1}{\Gamma(1-\alpha)}\Bigg\{\sum_{k=1}^{n-1}b_k^{(n,\alpha)}
\big[e^{-\lambda(\tau_n-\tau_k)}v(\tau_k)-e^{\lambda(\tau_n-\tau_{k-1})}v(\tau_{k-1})\big]\nn\\
&\quad~ + a_n^{(n,\alpha)}\big[v(\tau_n)-e^{-\lambda(\tau_n-\tau_{n-1})}v(\tau_{n-1})\big]\Bigg\}\nn\\
&=\frac{1}{\Gamma(1-\alpha)}\Bigg[b_n^{(n,\alpha)}v(\tau_n)-\sum_{k=1}^{n-1}(b_{k+1}^{(n,\alpha)}
-b_{k}^{(n,\alpha)})e^{\lambda(\tau_k-\tau_n)}v(\tau_k)\nn\\
&\quad~ -b_{1}^{(n,\alpha)}e^{\lambda(\tau_0-\tau_n)}v(\tau_0)\Bigg],\quad 1\leqslant n \leqslant M,\label{63}
\end{align}
%\end{equation}
where
\begin{equation}
b_k^{(n,\alpha)}=
\begin{cases}
\sum_{j=1}^{M_{exp}}w_j \frac{1}{\Delta\tau_{k}}\int_{\tau_{k-1}}^{\tau_k}e^{-s_j(\tau_n-s)}\mathrm{d}s,& k=1, 2, \cdots, n-1,\\
a_n^{(n,\alpha)},& k=n
\end{cases}\label{53}
\end{equation}
and it enjoys the following monotonicity:
\begin{lemma}\rm{(\hspace{-0.2mm}\cite{Shen2018})}\label{le4}
For any $\alpha \in (0,1),$ the sequence  $\{b_k^{(n,\alpha)}, n=1,2,\cdots,M\}$ defined in (\ref{53}) satisfies
\be0< b_1^{(n,\alpha)} < b_k^{(n,\alpha)} < \cdots  < b_{n-1}^{(n,\alpha)}.\nn\\ \ee
If $\epsilon \leqslant c_1M^\alpha,$ then
\be  b_{n-1}^{(n,\alpha)}\le b_{n}^{(n,\alpha)}.\nn\\ \ee
\end{lemma}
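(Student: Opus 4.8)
The plan is to prove the two assertions separately, both leaning on the fact, guaranteed by Lemma \ref{le3}, that the sum-of-exponentials data satisfy $w_j>0$ and $s_j>0$.

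For the positivity and the increasing chain $b_1^{(n,\alpha)}<b_2^{(n,\alpha)}<\cdots<b_{n-1}^{(n,\alpha)}$, I would first note that each integrand $e^{-s_j(\tau_n-s)}$ is strictly positive, so every $b_k^{(n,\alpha)}$ with $k\le n-1$ is a positive-weighted combination of positive quantities and is therefore positive; in particular $b_1^{(n,\alpha)}>0$. The heart of the monotonicity is that for fixed $j$ the map $s\mapsto e^{-s_j(\tau_n-s)}$ is strictly increasing on $(-\infty,\tau_n)$. Consequently $\frac{1}{\Delta\tau_k}\int_{\tau_{k-1}}^{\tau_k}e^{-s_j(\tau_n-s)}\,\mathrm{d}s$ is the mean value of an increasing function over $[\tau_{k-1},\tau_k]$, hence bounded above by its right-endpoint value $e^{-s_j(\tau_n-\tau_k)}$; the mean over the adjacent interval $[\tau_k,\tau_{k+1}]$ is bounded below by its left-endpoint value, which is the same $e^{-s_j(\tau_n-\tau_k)}$. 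Thus, termwise in $j$, the $k$-th mean does not exceed the $(k+1)$-th, and strict monotonicity of the exponential upgrades this to a strict inequality. Multiplying by $w_j>0$ and summing over $j$ yields $b_k^{(n,\alpha)}<b_{k+1}^{(n,\alpha)}$ for $1\le k\le n-2$.

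For the terminal inequality $b_{n-1}^{(n,\alpha)}\le b_n^{(n,\alpha)}=a_n^{(n,\alpha)}$ under the hypothesis $\epsilon\le c_1 M^\alpha$, the idea is to compare $b_{n-1}^{(n,\alpha)}$ against the \emph{exact} coefficient $a_{n-1}^{(n,\alpha)}$ of Eq. (\ref{191}). Writing both as averages over $[\tau_{n-2},\tau_{n-1}]$ of $\sum_j w_j e^{-s_j(\tau_n-s)}$ and of $(\tau_n-s)^{-\alpha}$ respectively, I would subtract and apply the uniform estimate (\ref{50}) pointwise in $s$, which gives $|b_{n-1}^{(n,\alpha)}-a_{n-1}^{(n,\alpha)}|\le\epsilon$. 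The point requiring care is that the argument $\tau_n-s$ must stay in the admissible window $[\delta,T]$ on which (\ref{50}) holds: for $s\in[\tau_{n-2},\tau_{n-1}]$ one has $\tau_n-s\ge\tau_n-\tau_{n-1}=\Delta\tau_n$, and since the graded step sizes are nondecreasing for $\gamma\ge1$ this is at least $\Delta\tau_1=\tau_1=\delta$, so the bound is legitimately applicable. Finally I would invoke the gap estimate of Lemma \ref{le2}, namely $a_n^{(n,\alpha)}-a_{n-1}^{(n,\alpha)}\ge c_1 M^\alpha$, to close the chain: $b_{n-1}^{(n,\alpha)}\le a_{n-1}^{(n,\alpha)}+\epsilon\le a_n^{(n,\alpha)}-c_1 M^\alpha+\epsilon\le a_n^{(n,\alpha)}=b_n^{(n,\alpha)}$, where the last step uses $\epsilon\le c_1 M^\alpha$.

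I expect the main obstacle to be the bookkeeping in this terminal inequality rather than the monotone-average argument. One must verify that the cut-off time $\delta$ coincides with the first graded node so that the sum-of-exponentials estimate is valid on $[\tau_{n-2},\tau_{n-1}]$, and then align the sign of the approximation error with the gap $c_1M^\alpha$ so that the threshold $\epsilon\le c_1M^\alpha$ is precisely what is needed to conclude. The increasing chain, by contrast, uses only positivity of $w_j,s_j$ together with monotonicity of $s\mapsto e^{-s_j(\tau_n-s)}$ and should go through cleanly.
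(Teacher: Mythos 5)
Your proof is correct and follows essentially the same route as the source the paper cites for this lemma (the paper itself offers no proof beyond the reference to \cite{Shen2018}): positivity and the increasing chain via averages of the monotone integrand $s\mapsto e^{-s_j(\tau_n-s)}$ compared at the shared node $\tau_k$, and the terminal inequality $b_{n-1}^{(n,\alpha)}\le b_n^{(n,\alpha)}$ by combining the pointwise SOE error bound (\ref{50}) (valid since $\tau_n-s\ge\Delta\tau_n\ge\Delta\tau_1=\delta$ on $[\tau_{n-2},\tau_{n-1}]$ for $\gamma\ge 1$) with the gap estimate $a_n^{(n,\alpha)}-a_{n-1}^{(n,\alpha)}\ge c_1M^{\alpha}$ from Lemma \ref{le2}. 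Your attention to the admissibility window $[\delta,T]$ for the sum-of-exponentials bound is exactly the point that needs checking, and it goes through.
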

%%%%%%%%%%%%%%%%%%%%%%%%%%%%%%%
Moreover, we can prove the following error estimate,
\begin{lemma}\label{le5}
For any ~$\alpha \in (0,1)$ and ~$|v'(\tau)|\leqslant c_0 \tau^{\alpha-1}, |v''(\tau)|\leqslant c_0 \tau^{\alpha-2},$  we have
$${}_0^C\mathbb{D}^{\alpha,\lambda}_{\tau}v(\tau_n)={}^F\mathcal{D}^{\alpha,\lambda}_{\tau}v(\tau_n)+
\mathcal{O}\big(n^{-\min\{\gamma(1+\alpha),2-\alpha\}}+\epsilon \big),\quad n=1,2,\cdots,M.$$
\end{lemma}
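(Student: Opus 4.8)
The plan is to follow the tempering reduction already used in the proof of Lemma~\ref{le1}. Writing $u(\tau)=e^{\lambda\tau}v(\tau)$, the decomposition in (\ref{197}) gives ${}^{C}_0\mathbb{D}^{\alpha,\lambda}_{\tau}v(\tau_n)=e^{-\lambda\tau_n}\cdot{}^{C}_0\mathbb{D}^{\alpha}_{\tau}u(\tau_n)$ and ${}^F\mathcal{D}^{\alpha,\lambda}_{\tau}v(\tau_n)=e^{-\lambda\tau_n}\cdot{}^F\mathcal{D}^{\alpha}_{\tau}u(\tau_n)$, so I would first peel off the tempering factor exactly as in (\ref{eq2.10x}):
\begin{equation*}
\left|{}^{C}_0\mathbb{D}^{\alpha,\lambda}_{\tau}v(\tau_n)-{}^F\mathcal{D}^{\alpha,\lambda}_{\tau}v(\tau_n)\right|
= e^{-\lambda\tau_n}\left|{}^{C}_0\mathbb{D}^{\alpha}_{\tau}u(\tau_n)-{}^F\mathcal{D}^{\alpha}_{\tau}u(\tau_n)\right|
\le \left|{}^{C}_0\mathbb{D}^{\alpha}_{\tau}u(\tau_n)-{}^F\mathcal{D}^{\alpha}_{\tau}u(\tau_n)\right|,
\end{equation*}
using $\lambda\ge0$, $\tau_n\ge0$ so that $e^{-\lambda\tau_n}\le1$. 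The hypotheses $|v'|\le c_0\tau^{\alpha-1}$ and $|v''|\le c_0\tau^{\alpha-2}$ transfer to $|u'|\le\tilde c_0\tau^{\alpha-1}$ and $|u''|\le\tilde c_0\tau^{\alpha-2}$ on $(0,T]$, so the whole problem reduces to estimating the \emph{untempered} fast-L1 error.

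Next I would insert the nonuniform L1 operator $\mathcal{D}^{\alpha}_{\tau}u(\tau_n)$ from (\ref{eq2.7}) as an intermediate and split
\begin{equation*}
{}^{C}_0\mathbb{D}^{\alpha}_{\tau}u(\tau_n)-{}^F\mathcal{D}^{\alpha}_{\tau}u(\tau_n)
=\underbrace{\big[{}^{C}_0\mathbb{D}^{\alpha}_{\tau}u(\tau_n)-\mathcal{D}^{\alpha}_{\tau}u(\tau_n)\big]}_{E_1}
+\underbrace{\big[\mathcal{D}^{\alpha}_{\tau}u(\tau_n)-{}^F\mathcal{D}^{\alpha}_{\tau}u(\tau_n)\big]}_{E_2}.
\end{equation*}
The term $E_1$ is exactly the nonuniform L1 truncation error already controlled inside the proof of Lemma~\ref{le1} (via \cite[Lemma~2.1, Eq.~(2.3)]{Shen2018}), so $|E_1|\le Cn^{-\min\{\gamma(1+\alpha),2-\alpha\}}$ under $|u''|\le\tilde c_0\tau^{\alpha-2}$. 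For $E_2$, comparing the L1 quadrature with the reformulation (\ref{63}) of ${}^F\mathcal{D}^{\alpha}_{\tau}u$, the local interval $[\tau_{n-1},\tau_n]$ uses the exact kernel in both and cancels, leaving only the history contribution
\begin{equation*}
E_2=\frac{1}{\Gamma(1-\alpha)}\sum_{k=1}^{n-1}\int_{\tau_{k-1}}^{\tau_k}(\Pi_{1,k}u)'(s)\Big[(\tau_n-s)^{-\alpha}-\sum_{j=1}^{M_{exp}}w_je^{-s_j(\tau_n-s)}\Big]\mathrm{d}s,
\end{equation*}
with $E_2=0$ for $n=1$ since there is then no history.

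The crux will be bounding $E_2$ \emph{uniformly in $n$}. For $s$ in any history interval $[\tau_{k-1},\tau_k]$ with $k\le n-1$ one has $\tau_n-s\in[\Delta\tau_n,\tau_n]\subseteq[\delta,T]$: the upper bound is $\tau_n\le T$, and the lower bound uses that on the graded mesh $\tau_n=T(n/M)^\gamma$ with $\gamma\ge1$ the step sizes are nondecreasing, whence $\Delta\tau_n\ge\Delta\tau_1=(1/M)^\gamma T=\delta$. Thus Lemma~\ref{le3} applies pointwise and the bracketed kernel difference is at most $\epsilon$ in modulus. Since $(\Pi_{1,k}u)'$ is constant on each subinterval, $\int_{\tau_{k-1}}^{\tau_k}|(\Pi_{1,k}u)'(s)|\,\mathrm{d}s=|u(\tau_k)-u(\tau_{k-1})|\le\int_{\tau_{k-1}}^{\tau_k}|u'(s)|\,\mathrm{d}s$, so
\begin{equation*}
|E_2|\le\frac{\epsilon}{\Gamma(1-\alpha)}\sum_{k=1}^{n-1}|u(\tau_k)-u(\tau_{k-1})|\le\frac{\epsilon}{\Gamma(1-\alpha)}\int_0^{\tau_{n-1}}|u'(s)|\,\mathrm{d}s\le\frac{\epsilon}{\Gamma(1-\alpha)}\,\tilde c_0\frac{T^\alpha}{\alpha}=C\epsilon,
\end{equation*}
where the last step uses integrability of the weak singularity $|u'(s)|\le\tilde c_0 s^{\alpha-1}$ (finite because $\alpha>0$); this is precisely where the extra hypothesis $|v'|\le c_0\tau^{\alpha-1}$, absent from Lemma~\ref{le1}, is needed. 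Combining $|E_1|\le Cn^{-\min\{\gamma(1+\alpha),2-\alpha\}}$ with $|E_2|\le C\epsilon$ and the reduction of the first paragraph yields the claim. I expect the only genuine obstacle to be the $[\delta,T]$ containment together with the uniform-in-$n$ accumulation of the sum-of-exponentials error; both hinge on the graded-mesh step monotonicity and the integrability of $s^{\alpha-1}$, after which the remainder is bookkeeping.
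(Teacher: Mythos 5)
Your proof is correct and follows essentially the same route as the paper: both peel off the tempering factor via $u(\tau)=e^{\lambda\tau}v(\tau)$ and $e^{-\lambda\tau_n}\le 1$, reducing to the untempered fast-L1 error for $u$, which the paper then disposes of by citing \cite[Lemma 2.5]{Shen2018}. The only difference is that you unpack that citation explicitly (the $E_1+E_2$ split, the $[\delta,T]$ containment from step monotonicity on the graded mesh, and the integrability of $s^{\alpha-1}$), all of which checks out.
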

\begin{proof}Again, we recall $u(\tau) = e^{\lambda\tau}v(\tau)$. From the above conditions, it is not hard
to verify that $|u'(\tau)|\leq \tilde{c}_0 \tau^{\alpha-1}, |u''(\tau)|
\leq \tilde{c}_0
\tau^{\alpha-2}$. By using the relations (\ref{eq2.10x}) and \eqref{197}, we follow
the conclusion of \cite[Lemma 2.5]{Shen2018} to obtain the error estimate
%exploit the relations
\begin{equation}
\begin{split}
{}^{C}_0\mathbb{D}^{\alpha,\lambda}_{\tau}v(\tau_n) &= e^{-\lambda\tau_n}\cdot
{}^{C}_0\mathbb{D}^{\alpha}_{\tau}u(\tau_n) = e^{-\lambda\tau_n}\Big[{}^F\mathcal{D}^{\alpha}_{\tau}u(\tau_n)+
\mathcal{O}\big(n^{-\min\{\gamma(1+\alpha),2-\alpha\}}+\epsilon \big)\Big]\\
& = e^{-\lambda\tau_n}\cdot{}^F\mathcal{D}^{\alpha}_{\tau}u(\tau_n) +
e^{-\lambda\tau_n}\cdot\mathcal{O}\big(n^{-\min\{\gamma(1+\alpha),2-\alpha\}}+\epsilon \big)\\
& = {}^F\mathcal{D}^{\alpha,\lambda}_{\tau}v(\tau_n) + \mathcal{O}\big(n^{-\min\{\gamma(1+\alpha),2-\alpha\}}
+\epsilon\big),
\end{split}
\end{equation}
which completes the proof.
% { x} - rU(x,),&
%\in(0,T],\
%U(x,0) =(x),& x\in\mathbb{R},\\frac{ U(x,)
%U(-\infty,\tau) = \phi(\tau),\quad U(+\infty,\tau) = \varphi(),& \tau\in(0,T],
\end{proof}
Applying the relations \eqref{617}-\eqref{618} along with Lemma \ref{le5} to Eq. \eqref{point-problem},
we can obtain the numerical approximation for the model (\ref{eq2.1}) as follows,
\begin{equation}%
\begin{cases}
\mathcal{H}_x\Big(\mathcal{D}^{\alpha,\lambda}_{\tau}V^{n}_i\Big) = \frac{1}{2}\sigma^2\delta^{2}_xV^{n}_i
- q\mathcal{H}_xV^{n}_i + \mathcal{H}_xg^{n}_i + \widehat{\mathcal{R}}^{n}_i,~1\leq i\leq N-1,~1\leq n\leq M,\\
\!F_{j,i}^n\!=\!e^{-s_j\Delta\tau_n}\!F_{j,i}^{n-1}\!+\!B_j^n(V_i^{n-1}\!-\!V_i^{n-2}),~1\!\leq\! j \!\leq\!
\!M\!_{\rm{exp}},~1 \!\leq\! i \!\leq\! \!N-1,~2\leq n \!\leq\! \!M,\\
F_{j,i}^1=0,\quad j=1,2,\cdots,M_{\rm{exp}},\quad 1\leq i \leq N-1,\\
V^{0}_i = \sigma(x_i),\quad 1\leq i\leq N-1,\\
V^{n}_0 = V^{n}_N = 0,\quad 0\leq n \leq M,
\end{cases}
\label{eq3.10}
\end{equation}
where $\{\widehat{\mathcal{R}}^{n}_i\}$ are small and satisfy the following inequality
\begin{equation}
|\widehat{\mathcal{R}}^{n}_i|\leq c_3\big(n^{-\min\{\gamma(1+\alpha),2-\alpha\}} + h^4 + \epsilon\big)
\end{equation}
according to Lemma \ref{le5}.
%&= \frac{e^{-\lambda \tau}}{\Gamma(1 - )}\frac{\partial}
%{\partial \tau}\int^{\tau}_0\frac{e^{\lambda \eta}U(x,\eta) - U(x,0)}{(\tau
%- \eta)^{\alpha}}d\eta\

%& = \frac{e^{-\lambda\tau}}{\Gamma(1 - \alpha)}\frac{\partial}
%{\partial \tau}\int^{\tau}_0\frac{e^{\lambda \eta}U(x,\eta)}{(\tau
%- \eta)^{\alpha}}d\eta - \frac{e^{-\lambda\tau}}{\Gamma(1 - \alpha)}\frac{\partial}
%{\partial \tau}\int^{\tau}_0\frac{U(x,0)}{(\tau - \eta)^{\alpha}}d\eta\\
%& = \frac{e^{-\lambda\tau}}{\Gamma(1 - \alpha)}\frac{\partial}
%{\partial \tau}\int^{\tau}_0\frac{e^{\lambda \eta}U(x,\eta)}{(\tau
%- \eta)^{\alpha}}d\eta - \frac{e^{-\lambda\tau}\tau^{-\alpha}}{\Gamma(1 - \alpha)}U(x,0)\\
%& = \frac{e^{-\lambda\tau}}{\Gamma(1 - \alpha)}\int^{\tau}_0\frac{1}{(\tau
%- \eta)^{\alpha}}\cdot\frac{\partial [e^{\lambda \eta}U(x,\eta)]}{\partial \eta}d\eta\\
%%& = e^{-\lambda\tau}{}^{C}_0D^{\alpha}_{\tau}\left[e^{\lambda\tau}U(x,\tau)\right], \\
%& \triangleq {}^{C}_0\mathbb{D}^{\alpha,\lambda}_{\tau}U(x,\tau)
%\end{split}
%%n = 0,1,\ldots,N,%
Now, we omit the small error items and arrive at the following fast difference scheme (FS)
\begin{equation}
\begin{cases}
\mathcal{H}_x\left(\mathcal{D}^{\alpha,\lambda}_{\tau}v^{n}_i\right) = \frac{1}{2}\sigma^2\delta^{2}_xv^{n}_i
- q\mathcal{H}_xv^{n}_i + \mathcal{H}_xg^{n}_i,\; 1\leq i\leq N-1,~~1\leq n\leq M,\\
\!f_{j,i}^n = \!e^{-s_j\Delta\tau_n}\!f_{j,i}^{n-1}+\!B_j^n(v_i^{n-1}-\!v_i^{n-2}),~1\leq\! j \leq M_{exp},~1\leq\! i\leq\! N-1,~2\leq\! n \leq M,\\
f_{j,i}^1=0,\quad j=1,2,\cdots,M_{exp},\quad 1\leq i \leq N-1,\\
v^{0}_i = \sigma(x_i),\quad 1\leq i\leq N-1,\\
v^{n}_0 = v^{n}_N = 0,\quad 0\leq n \leq M,
\end{cases}
\label{fast-scheme}
\end{equation}
which utilizes the representation (\ref{63}) to equivalently rewrite the last difference scheme as
\begin{align}
&\frac{1}{\Gamma(1-\alpha)}\Big[b_n^{(n,\alpha)}\mathcal{H}_xv^{n}_i-\sum_{k=1}^{n-1}(b_{k+1}^{(n,\alpha)}
-b_{k}^{(n,\alpha)})e^{\lambda(\tau_k-\tau_n)}\mathcal{H}_xv^{k}_i -b_{1}^{(n,\alpha)}e^{\lambda(\tau_0-\tau_n)}
\mathcal{H}_xv^{0}_i\Big] \nn\\
& = \frac{1}{2}\sigma^2\delta^{2}_xv^{n}_i
- q\mathcal{H}_xv^{n}_i + \mathcal{H}_xg^{n}_i,\quad 1\leq i\leq N-1,~~1\leq n\leq M,\label{eq526}\\
&v^{0}_i = \sigma(x_i),\quad 1\leq i\leq N-1,\label{eq527}\\
&v^{n}_0 = v^{n}_N = 0,\quad 0\leq n \leq M.\label{eq528}
\end{align}
The fast scheme (\ref{fast-scheme}), at each time level, needs to solve a tri-diagonal linear system that can be solved
by the double-sweep method with computational cost $\mathcal{O}(NM + NMM_{exp})$.
Note that, generally, $M_{exp} < 120$ \cite{Shen2018,Gu2021} and $M$ is large, thus
the scheme (\ref{fast-scheme}) requires lower computational cost than the scheme
(\ref{fourth-order-scheme}). %$\lim\limits_{k1}$
\subsection{Convergence of the fast compact difference scheme}
In this subsection, we discuss the stability and convergence of the fast compact difference scheme for
the problem (\ref{eq2.1}).
\begin{theorem}
Suppose $\{v_i^n\,|\, 0\leqslant i \leqslant N,~0\leq n\leq M\}$ is the solution of the difference scheme
\eqref{eq526}--\eqref{eq528} and and $\epsilon\leq c_1M^{\alpha}$.
Then, it holds
\begin{align}
\|v^k\|_{\infty}\leqslant \|v^0\|_{\infty}+ \Gamma(1-\alpha)\max \limits_{1\leq m \leq k}
\frac{\|g^m\|_\infty}{b_1^{(m,\alpha)}},\quad k=1, 2, \cdots, M.
\label{65}
\end{align}
\label{thm3.4}
\vspace{-5mm}
\end{theorem}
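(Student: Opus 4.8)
The plan is to reproduce, almost verbatim, the maximum-norm argument used for the direct scheme in Theorem~\ref{thm1}, with the exact kernel weights $a_k^{(n,\alpha)}$ replaced throughout by the sum-of-exponentials weights $b_k^{(n,\alpha)}$ from \eqref{53}, and with the monotonicity of the $a$'s replaced by that of the $b$'s supplied by Lemma~\ref{le4}. The only structural ingredient that must be re-examined is this monotonicity, and that is exactly where the hypothesis $\epsilon\le c_1M^{\alpha}$ is consumed: it ensures $0<b_1^{(n,\alpha)}<b_2^{(n,\alpha)}<\cdots<b_{n-1}^{(n,\alpha)}\le b_n^{(n,\alpha)}$, so the whole sequence $\{b_k^{(n,\alpha)}\}_{k=1}^{n}$ is nondecreasing and the telescoping identity $\sum_{k=1}^{n-1}(b_{k+1}^{(n,\alpha)}-b_k^{(n,\alpha)})+b_1^{(n,\alpha)}=b_n^{(n,\alpha)}$ carries nonnegative weights, just as in \eqref{64}.

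Concretely, I would start from the equivalent form \eqref{eq526} of the fast scheme and collect the terms carrying $v_i^n$. Expanding $\frac12\sigma^2\delta_x^2 v_i^n=\frac{\sigma^2}{2h^2}(v_{i-1}^n-2v_i^n+v_{i+1}^n)$ and moving the reaction term $-q\mathcal{H}_x v_i^n$ to the left gives the exact $b$-analogue of \eqref{41}: the left-hand combination $[\frac{1}{\Gamma(1-\alpha)}b_n^{(n,\alpha)}+q]\mathcal{H}_x v_i^n+\frac{\sigma^2}{h^2}v_i^n$ equals $\frac{\sigma^2}{2h^2}(v_{i-1}^n+v_{i+1}^n)$ plus the weighted history sum $\frac{1}{\Gamma(1-\alpha)}\sum_{k=1}^{n-1}e^{-\lambda(\tau_n-\tau_k)}(b_{k+1}^{(n,\alpha)}-b_k^{(n,\alpha)})\mathcal{H}_x v_i^k$, the initial contribution $\frac{1}{\Gamma(1-\alpha)}e^{-\lambda(\tau_n-\tau_0)}b_1^{(n,\alpha)}\mathcal{H}_x v_i^0$, and $\mathcal{H}_x g_i^n$. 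Choosing an index $i_0$ with $|v_{i_0}^n|=\|v^n\|_\infty$, setting $i=i_0$ and passing to absolute values, I would bound the history terms by $|\mathcal{H}_x v_{i_0}^k|\le\|\mathcal{H}_x v^k\|_\infty=\|v^k\|_\infty$ (the identity already used in the proof of Theorem~\ref{thm1}), use $0<e^{-\lambda(\tau_n-\tau_k)}\le1$ since $\lambda\ge0$ and $\tau_n\ge\tau_k$, and use $b_{k+1}^{(n,\alpha)}-b_k^{(n,\alpha)}>0$ from Lemma~\ref{le4}. The two $\frac{\sigma^2}{h^2}\|v^n\|_\infty$ contributions cancel and the nonnegative $q\|v^n\|_\infty$ term may be discarded, leaving the reduced estimate $b_n^{(n,\alpha)}\|v^n\|_\infty\le\sum_{k=1}^{n-1}(b_{k+1}^{(n,\alpha)}-b_k^{(n,\alpha)})\|v^k\|_\infty+b_1^{(n,\alpha)}(\|v^0\|_\infty+\frac{\Gamma(1-\alpha)}{b_1^{(n,\alpha)}}\|g^n\|_\infty)$, which is the $b$-analogue of \eqref{64}.

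With this reduced inequality in hand, I would close the estimate by induction on $n$, exactly as after \eqref{64}. For $n=1$ the history sum is empty and $b_1^{(1,\alpha)}=a_1^{(1,\alpha)}>0$ by \eqref{53} and Lemma~\ref{le2}, so the reduced estimate collapses to $\|v^1\|_\infty\le\|v^0\|_\infty+\frac{\Gamma(1-\alpha)}{b_1^{(1,\alpha)}}\|g^1\|_\infty$, which is \eqref{65} for $k=1$. Assuming \eqref{65} for all $1\le k\le n-1$, I would insert the induction hypothesis on the right-hand side; because every weight $b_{k+1}^{(n,\alpha)}-b_k^{(n,\alpha)}$ together with $b_1^{(n,\alpha)}$ is nonnegative, the common bound $\|v^0\|_\infty+\Gamma(1-\alpha)\max_{1\le m\le n}\|g^m\|_\infty/b_1^{(m,\alpha)}$ factors out, and the telescoping identity collapses $\sum_{k=1}^{n-1}(b_{k+1}^{(n,\alpha)}-b_k^{(n,\alpha)})+b_1^{(n,\alpha)}$ to $b_n^{(n,\alpha)}$. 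Dividing by $b_n^{(n,\alpha)}\ne0$ then yields \eqref{65} for $k=n$, completing the induction.

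The main obstacle, and the only place where the present theorem genuinely differs from Theorem~\ref{thm1}, is the weak monotonicity at the diagonal, $b_{n-1}^{(n,\alpha)}\le b_n^{(n,\alpha)}$. For $k\le n-2$ the increments $b_{k+1}^{(n,\alpha)}-b_k^{(n,\alpha)}$ are positive by the strict ordering of the quadrature weights, but at $k=n-1$ the weight $b_{n-1}^{(n,\alpha)}$ originates from the exponential-sum approximation while $b_n^{(n,\alpha)}=a_n^{(n,\alpha)}$ is the exact singular weight, so their comparison is not automatic and must be controlled by Lemma~\ref{le4} under $\epsilon\le c_1M^{\alpha}$. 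Should this condition fail, the increment $b_n^{(n,\alpha)}-b_{n-1}^{(n,\alpha)}$ could turn negative, the factoring-out step of the induction would break, and one could no longer recover a bound with the coefficient $1$ in front of $\|v^0\|_\infty$. Everything else in the argument is routine once this single monotonicity is secured.
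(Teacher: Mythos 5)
Your proposal is correct and follows essentially the same route as the paper: the authors likewise rewrite \eqref{eq526} in the $b$-analogue of \eqref{41}, evaluate at a maximizing index, and then appeal to the argument of Theorem~\ref{thm1} together with Lemma~\ref{le4} (whose hypothesis $\epsilon\le c_1M^{\alpha}$ secures exactly the diagonal comparison $b_{n-1}^{(n,\alpha)}\le b_n^{(n,\alpha)}$ that you single out), omitting the induction details that you spell out explicitly.
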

\begin{proof}
Let $1 \leq \exists i_0 \leq N-1$ be an index such that $|v^{n}_{i_0}| = \|v\|_{\infty}$. Rewriting
the Eq. (\ref{eq526}) in the form
\begin{equation}
\begin{split}
\Bigg[\frac{1}{\Gamma(1-\alpha)}b_n^{(n,\alpha)}+q\Bigg]\mathcal{H}_{x}v_i^n+\frac{\sigma^2}{h^2}v_i^n
&=\frac{\sigma^2}{2h^2}(v_{i-1}^n+v_{i+1}^n)+\frac{1}{\Gamma(1-\alpha)}
\Bigg[\sum_{k=1}^{n-1}e^{-\lambda(\tau_n-\tau_k)}\cdot\\
&~~~~(b_{k+1}^{(n,\alpha)}-b_k^{(n,\alpha)})\mathcal{H}_{x}v_i^k
+e^{-\lambda(\tau_n-\tau_0)}b_1^{(n,\alpha)}\mathcal{H}_{x}v_i^0\Bigg]\\
&~~~+\mathcal{H}_{x}g_i^n,\quad 1 \leqslant i
\leqslant N-1,\quad 1\leqslant n \leqslant M,
\end{split}
\label{41xz}
\end{equation}
where we set $i=i_0$ in (\ref{41xz}) and take the absolute value on the both sides of the above equation
such that
\begin{equation*}
\begin{split}
\Big[\frac{1}{\Gamma(1-\alpha)}b_n^{(n,\alpha)}+q\Big]\|v^n\|_{\infty}+
\frac{\sigma^2}{h^2}\|v^n\|_{\infty}&\leqslant
\frac{\sigma^2}{h^2}\|v^n\|_{\infty}+\frac{1}{\Gamma(1-\alpha)}\Bigg[\sum_{k=1}^{n-1}(b_{k+1}^{(n,\alpha)}-b_k^{(n,\alpha)})\|v^k\|_{\infty}
\\
&~~~+b_1^{(n,\alpha)}\|v^0\|_{\infty}\Bigg]+\|g^n\|_{\infty},\quad
1\leqslant n \leqslant M.
\end{split}
\vspace{-4mm}
\end{equation*}
Next, combining the proofs of Theorem \ref{thm1} and \cite[Theorem 4.1]{Shen2018} can easily give
the rest of this proof of the above theorem, so we omit the similar details here.
\end{proof}
%%%%%%%%%%%%%%%%%%%
Similarly, according to the above theorem, it concludes that the fast difference scheme (\ref{fast-scheme})
is stable to the initial value $\sigma$ and the right-hand side term $g$. Now, the convergence
of the fast difference scheme can be given as follow:
%%%%%%%%%%%%%%%%%
\begin{theorem} Suppose $\{V_i^n\,|\,0\le i\le N,~0\le n\le M\}$ is the solution of the problem \eqref{eq2.1} and $\{v_i^n\,|\,0\le i\le N,~0\le n\le M\}$ is the solution of  the difference
scheme \eqref{fast-scheme}. Let
 $$e_i^n=V_i^n-v_i^n,\quad 0\leqslant i \leqslant N,~0\leqslant n \leqslant M.$$
If $\epsilon \le \min\{c_1M^\alpha, \frac{1}{2}T^{-\alpha}\},$ then
\be \|e^n\|_\infty \leqslant 2\Gamma(1 - \alpha)c_3T^{\alpha}\Big(M^{-\min\{\gamma\alpha, 2-\alpha\}}+h^4+\epsilon\Big),\quad 1\le n\le M.\nn
\vspace{-4mm}\ee
\label{KNY2}
\vspace{-4mm}
\end{theorem}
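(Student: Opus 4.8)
The plan is to establish the convergence estimate for the fast scheme \eqref{fast-scheme} by combining the stability bound of Theorem \ref{thm3.4} with the truncation-error estimate from Lemma \ref{le5}, exactly paralleling the argument already given for the direct scheme in Theorem \ref{thm4}. First I would subtract the consistency form \eqref{eq3.10} (the scheme satisfied by the exact solution, with remainder $\widehat{\mathcal{R}}^n_i$) from the fast scheme \eqref{fast-scheme} to obtain a homogeneous error system of the same structure as \eqref{4-order}, but now governed by the $b_k^{(n,\alpha)}$ coefficients rather than the $a_k^{(n,\alpha)}$ ones:
\begin{equation*}
\begin{cases}
\mathcal{H}_x\bigl({}^F\mathcal{D}^{\alpha,\lambda}_{\tau}e^{n}_i\bigr) = \tfrac{1}{2}\sigma^2\delta^{2}_xe^{n}_i - q\mathcal{H}_xe^{n}_i + \widehat{\mathcal{R}}^{n}_i,& 1\le i\le N-1,\ 1\le n\le M,\\
e^{0}_i = 0, & 1\le i\le N-1,\\
e^{n}_0 = e^{n}_N = 0, & 0\le n\le M.
\end{cases}
\end{equation*}
Since $\epsilon \le c_1 M^{\alpha}$ holds by hypothesis, Lemma \ref{le4} guarantees the monotonicity of the $b_k^{(n,\alpha)}$, which is precisely what the stability proof of Theorem \ref{thm3.4} requires; hence I may apply that theorem to the error system (with $e^0 = 0$) and conclude
\begin{equation*}
\|e^n\|_\infty \le \Gamma(1-\alpha)\max_{1\le m\le n}\frac{\|\widehat{\mathcal{R}}^m\|_\infty}{b_1^{(m,\alpha)}},\quad n=1,2,\ldots,M.
\end{equation*}

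Next I would insert the bound $|\widehat{\mathcal{R}}^n_i|\le c_3(n^{-\min\{\gamma(1+\alpha),2-\alpha\}}+h^4+\epsilon)$ and control the factor $1/b_1^{(m,\alpha)}$. The key quantitative step is to bound $b_1^{(m,\alpha)}$ from below. Because $b_1^{(m,\alpha)}$ is the SOE surrogate for the history coefficient, the condition $\epsilon \le \tfrac{1}{2}T^{-\alpha}$ ensures $b_1^{(m,\alpha)}$ stays within a factor of two of its exact-kernel counterpart $a_1^{(m,\alpha)}$; invoking the lower bound $a_1^{(m,\alpha)}\ge T^{-\alpha}$ from \eqref{615BB} in Lemma \ref{le2} then yields $b_1^{(m,\alpha)}\ge \tfrac{1}{2}T^{-\alpha}$, so that $1/b_1^{(m,\alpha)}\le 2T^{\alpha}$. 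Substituting this gives
\begin{equation*}
\|e^n\|_\infty \le 2\Gamma(1-\alpha)c_3 T^{\alpha}\max_{1\le m\le n}\Bigl(m^{-\min\{\gamma(1+\alpha),2-\alpha\}}+h^4+\epsilon\Bigr).
\end{equation*}

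Finally I would convert the temporal factor into the claimed $M$-order. As in Theorem \ref{thm4}, the maximum over $m$ of $m^{-\min\{\gamma(1+\alpha),2-\alpha\}}$ is attained at $m=1$ and, after accounting for the graded-mesh weight $\tau_m^{\alpha}=(T/M^{\gamma})^{\alpha}m^{\gamma\alpha}$ that appears when passing from the per-step estimate to the global one, the exponent collapses to $M^{-\min\{\gamma\alpha,\,2-\alpha\}}$; the spatial and SOE contributions pass through unchanged as $h^4$ and $\epsilon$. This produces the stated estimate. The main obstacle I anticipate is the lower bound on $b_1^{(m,\alpha)}$: unlike $a_1^{(m,\alpha)}$, the coefficient $b_1^{(m,\alpha)}$ is defined through the exponential sum \eqref{53}, so one must carefully use both hypotheses on $\epsilon$ — the monotonicity condition $\epsilon\le c_1M^\alpha$ to legitimize the stability argument and the quantitative condition $\epsilon\le\tfrac12 T^{-\alpha}$ to keep $b_1^{(m,\alpha)}$ bounded away from zero. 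Everything else reduces to the bookkeeping already carried out in Theorem \ref{thm4} and in \cite[Theorem 4.2]{Shen2018}, which I would cite rather than repeat.
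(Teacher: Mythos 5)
Your overall strategy is exactly what the paper intends (its own ``proof'' is just a pointer to Theorem \ref{thm4} and \cite[Theorem 4.2]{Shen2018}): form the error system from \eqref{eq3.10} and \eqref{fast-scheme}, invoke the stability bound of Theorem \ref{thm3.4} (legitimised by $\epsilon\le c_1M^\alpha$ via Lemma \ref{le4}), insert the consistency bound $|\widehat{\mathcal{R}}^n_i|\le c_3(n^{-\min\{\gamma(1+\alpha),2-\alpha\}}+h^4+\epsilon)$, and control $1/b_1^{(m,\alpha)}$ using $\epsilon\le\frac12 T^{-\alpha}$. All of that is right, including the observation that $b_1^{(m,\alpha)}$ differs from $a_1^{(m,\alpha)}$ by at most $\epsilon$ (for $m\ge 2$ one needs $\tau_m-s\ge\delta$ on $[\tau_0,\tau_1]$, which holds since $\tau_m-\tau_1\ge T/M^\gamma=\delta$; for $m=1$ one has $b_1^{(1,\alpha)}=a_1^{(1,\alpha)}$ by \eqref{53}).

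The one genuine flaw is in your final quantitative step. You replace $1/b_1^{(m,\alpha)}$ by the \emph{uniform} constant $2T^\alpha$ and then take $\max_{1\le m\le n} m^{-\min\{\gamma(1+\alpha),2-\alpha\}}$; but that maximum equals $1$ (attained at $m=1$), so the displayed inequality you obtain gives no temporal convergence at all, and the subsequent claim that ``the exponent collapses to $M^{-\min\{\gamma\alpha,2-\alpha\}}$ after accounting for the graded-mesh weight $\tau_m^\alpha$'' does not follow -- once you have extracted the constant $2T^\alpha$, the weight $\tau_m^\alpha$ no longer appears anywhere. The fix is to keep the $m$-dependence: from $a_1^{(m,\alpha)}=\frac{1}{\Delta\tau_1}\int_{\tau_0}^{\tau_1}(\tau_m-s)^{-\alpha}\,ds\ge\tau_m^{-\alpha}$ and $\epsilon\le\frac12T^{-\alpha}\le\frac12\tau_m^{-\alpha}$ one gets $b_1^{(m,\alpha)}\ge\tau_m^{-\alpha}-\epsilon\ge\frac12\tau_m^{-\alpha}$, hence $1/b_1^{(m,\alpha)}\le 2\tau_m^{\alpha}=2(T/M^{\gamma})^{\alpha}m^{\gamma\alpha}$. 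Carrying this into the maximum reproduces exactly the computation at the end of Theorem \ref{thm4}, namely $\max_m m^{\gamma\alpha-\min\{\gamma(1+\alpha),2-\alpha\}}\le M^{\gamma\alpha-\min\{\gamma\alpha+\gamma,\,2-\alpha\}}$-type bookkeeping yielding $T^{\alpha}M^{-\min\{\gamma\alpha,2-\alpha\}}$, and the factor $2$ together with $\tau_n^{\alpha}\le T^{\alpha}$ for the $h^4$ and $\epsilon$ terms gives the stated constant $2\Gamma(1-\alpha)c_3T^{\alpha}$. With that correction your argument is complete and coincides with the paper's intended route.
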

\begin{proof}
The proof of this theorem is similar with those of Theorem \ref{thm4} and \cite[Theorem 4.2]{Shen2018}, so
here we omit the details.
\end{proof}
%%%%%%%%%%
\begin{remark}
It is worth noting that if $\alpha\rightarrow 1^{-}$, the upper bounds
of error estimates in Theorem \ref{thm4} and Theorem \ref{KNY2} will
blow up and are indeed $\alpha$-nonrobust, although we do not observe this
phenomenon in our experiments. To remedy this drawback, the so-called discrete comparison principle
(equivalent to the discrete maximum principle) for the L1 discretisation of
the Caputo derivative and the central difference discretisation of the spatial derivative
in time-fractional PDEs can be extended to yield a kind of new error
estimates which are $\alpha$-robust; refer to \cite{Liao17,Chen2020,Ji2020,Wang2022,Stynes22} for a discussion.
Considering the length of this paper and the need of reformulating some conclusions,
we shall not pursue that here and the corresponding results will be reported in
the future work.
\end{remark}
%%%%%%%%%%%%%%%%%%
\section{Numerical experiments}
\label{sec4}
In this section, the first two examples exhibiting an exact solution are presented
to demonstrate the accuracy of the solution and the order of convergence
of our proposed numerical schemes given in Sections \ref{sec2}--\ref{sec3}. Furthermore, we use
the proposed schemes to price several different European options governed by a
tempered TFBS model, which is one of the most interesting models in the financial market.
All experiments were performed on a Windows 7 (64 bit) PC-11th Gen Inter(R) i7-11700K @3.60GHz,
32 GB of RAM using MATLAB R2021a.

Although the grading index $\gamma$ can be selected as $\gamma = (2-\alpha)/\alpha$ to
get the temporal convergence of ``optimal" order $(2-\alpha)$ for our proposed schemes, it should
make the graded meshes very twisted near the initial time and maybe causes some round-off errors
in the practical implementations especially when $\alpha$ is small and $\gamma$ is also large (e.g, $\alpha=0.1$ and $\gamma = 19$).
For convenience, we just select $\gamma$ (not very large) such that $1< \min\{\gamma\alpha,2-\alpha\} \leq 2-\alpha$
in our experiments. To evaluate the accuracy and efficiency of our proposed schemes, we consider the
maximum norm error $e_{\infty}(M,N):= \max\limits_{0\leq k\leq M}\|V^k - v^k\|_{\infty}$ and
convergence orders
\begin{equation*}
{Order}_{h}(N) = \log_2\left(\frac{e_{\infty}(N/2,M(N/2))}{e_{\infty}(N,M(N))}\right),\quad {Order}_{\Delta\tau}(M) = \log_2\left(\frac{e_{\infty}(N(M/2),M/2)}{e_{\infty}(N(M),M)}\right).
\end{equation*}

\textbf{Example 1}. Consider the following tempered TFBS model with homogeneous
BCs:
\begin{equation}
\begin{cases}
{}^{C}_0\mathbb{D}^{\alpha,\lambda}_{\tau}U(x,\tau) = \frac{\sigma^2}{2}
\frac{\partial^2 U(x,\tau)}{\partial x^2} + c \frac{\partial U(x,\tau)
}{\partial x} -rU(x,\tau) + f(x,\tau), &(x,\tau)\in(0,1)\times(0,1],\\
U(x,0) = 5\sin(\pi x), & x\in[0,1],\\
U(0,\tau) = U(1,\tau) = 0, &\tau\in(0,1],
\end{cases}
\label{ex1eqn}
\end{equation}
where the source term
\begin{align*}
%\begin{split}
f(x,\tau) &= 5e^{-\lambda\tau}\Big[\Gamma(1 + \alpha)\sin(\pi x)
+ \frac{\sigma^2}{2}\pi^2\sin(\pi x)(\tau^{\alpha} + 1) - c\pi\cos(\pi
x)(\tau^{\alpha} + 1)\\
&\quad + r\sin(\pi x)(\tau^{\alpha} + 1)\Big]
%\vspace{-3mm}
\end{align*}
is chosen so that the exact solution of the model (\ref{ex1eqn}) is $U(x,\tau) = 5e^{-\lambda \tau}(\tau^{\alpha} + 1)\sin(\pi x)$. Here the parameter values are $r = 0.05, D = 0$ and $\sigma = 0.25$.

At present, Tables \ref{tab1}--\ref{tab2} show numerical results of the fast and direct schemes
for Example 1 with different $\alpha$ and the grading parameter $\gamma$. It is seen that the errors and
rates of these two methods are the same as those in Table \ref{tab1}--\ref{tab2}; hence, the SOE approximation
for accelerating the tempered L1 formula on graded meshes does not lose the accuracy with fitted tolerance $\epsilon=10^{-12}$,
which is used throughout this section.
Moreover, it is clear that the proposed numerical methods are convergent with $\min\{\gamma\alpha,2-\alpha\}$-order
and fourth-order accuracy in time and space, respectively,
which agree well with the theoretical statements. In addition, as seen from the CPU times of fast
and direct schemes applied to Example 1 in these tables. Obviously, the fast difference scheme saves much computational
cost compared with the direct difference scheme in terms of the elapsed CPU time.

Moreover, we plot Fig. \ref{fig3x} to show the exact solution and numerical solutions produced by
two difference schemes for Example 1. As seen from Fig. \ref{fig3x}, the exact solution of Example
1 indeed changes dramatically near the initial time (i.e., a weak singularity near $\tau = 0$), and
numerical solutions produced by two proposed schemes can capture such a phenomena/feature well. In summary,
our proposed schemes can be viewed as two robust and reliable tools for Example 1, especially
the FS will be preferable because it can save much computational cost.
%
%K\widetilde{Y}_k=\widetilde{X}_{k+1}\widetilde{B}_{k,k+1}^T.
%\end{equation}
%                                                                                         f_j \\
%%%%%%%%%%%%%%%%%%%%%%%%%%%%%%%%%%%%%%%%%%%%%%%%%%%%%%% {Table 1} %%%%%%%%%%%%%%%%%%%%%%%%%%%%%%%%%%%%%%%%%%%%%%%%%%%%%%%%%%
\begin{table}[!htpb]
	\begin{center}
		\caption{Spatial convergence order of two proposed compact difference methods for Example 1 with $M(N) = \lceil N^{4/\min\{\gamma\alpha,2-\alpha\}}\rceil$ and $\lambda = 1$.}
		\centering
		\begin{tabular}{cccccccc}
			\hline
            & &\multicolumn{3}{c}{DS \eqref{fourth-order-scheme}} &\multicolumn{3}{c}{FS \eqref{fast-scheme}} \\
            [-2pt] \cmidrule(lr){3-5} \cmidrule(lr){6-8} \\ [-11pt]
			$(\alpha,\gamma)$ &$N$ &$e_{\infty}(M,N)$ &${Order}_h$ &Time&$e_{\infty}(M,N)$ &${Order}_h$ &Time \\
			\hline
			(0.3, 4)          &4   &4.0427e-3  & --            &0.006   &4.0427e-3         & --           &0.029     \\
			                  &8   &2.5550e-4  & 3.9839        &0.418   &2.5550e-4         & 3.9839       &0.362    \\
			                  &16  &1.5994e-5  & 3.9977        &33.752  &1.5994e-5         & 3.9977       &4.543    \\
			                  &32  &9.9971e-7  & 3.9999        &3358.44 &9.9971e-7         & 3.9999       &56.177  \\
%%%%%%%%%%%%%%%%%%%%%%%%%%%%%%%%%%%%%%%%%%
			\hline
			(0.5, 3)          &6   &1.8649e-3  & --            &0.007   &1.8649e-3        & --            & 0.028    \\
			                  &12  &1.2337e-4  & 3.9180        &0.199   &1.2337e-4        & 3.9180        & 0.207     \\
			                  &24  &7.9150e-6  & 3.9623        &7.105   &7.9150e-6        & 3.9623        & 1.581     \\
			                  &48  &5.0149e-7  & 3.9803        &291.118 &5.0149e-7        & 3.9803        & 12.115   \\
%%%%%%%%%%%%%%%%%%%%%%%%%%%%%%%%%%%%%%%%%&
			\hline	
              (0.8, 2)        &4   &3.0372e-3  & --            &0.006   & 3.0372e-3        & --            & 0.021   \\
			                  &8   &2.0027e-4  & 3.9206        &0.417   & 2.0027e-4        & 3.9206        & 0.214   \\
			                  &16  &1.2626e-5  & 3.9875        &33.627  & 1.2626e-5        & 3.9875        & 2.572   \\
			                  &32  &7.9200e-7  & 3.9948        &3352.67 & 7.9200e-7        & 3.9948        & 31.116  \\
			\hline
		\end{tabular}
		\label{tab1}
	\end{center}
\end{table}
\begin{table}[!htpb]
	\begin{center}
		\caption{Temporal convergence order of two proposed compact difference methods for Example 1 with $N(M) = \lceil M^{\min\{\gamma\alpha,2-\alpha\}/4}\rceil$ and $\lambda = 1$.}
		\centering
		\begin{tabular}{cccccccc}
			\hline
			& & \multicolumn{3}{c}{DS \eqref{fourth-order-scheme}} & \multicolumn{3}{c}{FS \eqref{fast-scheme}} \\
			[-2pt] \cmidrule(lr){3-5} \cmidrule(lr){6-8} \\ [-11pt]
			$(\alpha,\gamma)$ & $M$ & $e_2(M,N)$ & ${Order}_{\Delta\tau}$&Time & $e_{\infty}(M,N)$ & ${Order}_{\Delta\tau}$ & $\mathrm{Time}$ \\
			\hline
			(0.3, 4) &800  & 3.4397e-4  & --      &0.265  & 3.4397e-4  & --     &0.275  \\
			         &1600 & 1.4977e-4  & 1.1995  &0.950  & 1.4977e-4  & 1.1995 &0.587  \\
			         &3200 & 6.5200e-5  & 1.1998  &3.485  & 6.5200e-5  & 1.1998 &1.251  \\
			         &6400 & 2.8382e-5  & 1.1999  &13.250 & 2.8382e-5  & 1.9999 &2.680  \\
			\hline
			(0.5, 3) &640  & 1.5773e-4  & --      &0.138  & 1.5773e-4  & --     & 0.173 \\
			         &1280 & 5.6136e-5  & 1.4905  &0.542  & 5.6136e-5  & 1.4905 & 0.363 \\
			         &2560 & 2.0073e-5  & 1.4837  &2.076  & 2.0073e-5  & 1.4837 & 0.794 \\
			         &5120 & 7.1614e-6  & 1.4869  &8.112  & 7.1614e-6  & 1.4869 & 1.693  \\
%			& 256 & OoM & $>5$ hours & (278.0, 544.0) & $\approx 3.1$ hours & (7.0, 5.0) & 354.081 \\
			\hline	
            (0.8, 2) &640  & 3.4921e-4  & --      &0.173  & 3.4921e-4  & --     & 0.127  \\
			         &1280 & 1.5606e-4  & 1.1620  &0.626  & 1.5606e-4  & 1.1620 & 0.278 \\
			         &2560 & 6.8165e-5  & 1.1950  &2.277  & 6.8165e-5  & 1.1950 & 0.564  \\
			         &5120 & 2.9334e-5  & 1.2164  &8.578  & 2.9334e-5  & 1.2164 & 1.198  \\
%			& 256 & OoM & $>5$ hours & (154.0, 333.0) & 6901.452 & (8.0, 5.0) & 359.428 \\
			\hline
		\end{tabular}
		\label{tab2}
	\end{center}
\end{table}
%%%%%%%%%%%%%%%%%%%%%%%%%%%
\begin{figure}[!htpb]
\centering
\includegraphics[width=2.01in,height=2.05in]{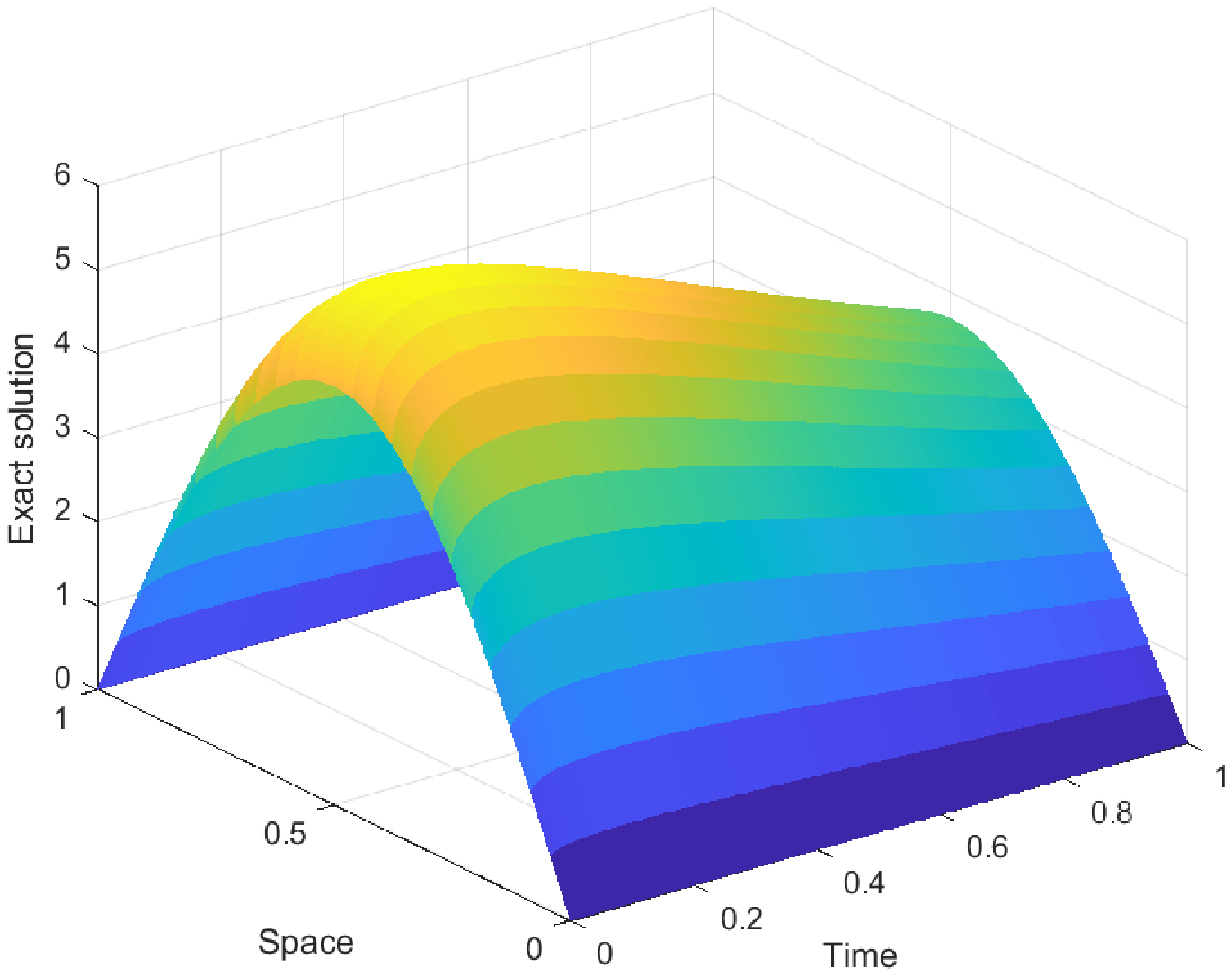}
\includegraphics[width=2.01in,height=2.05in]{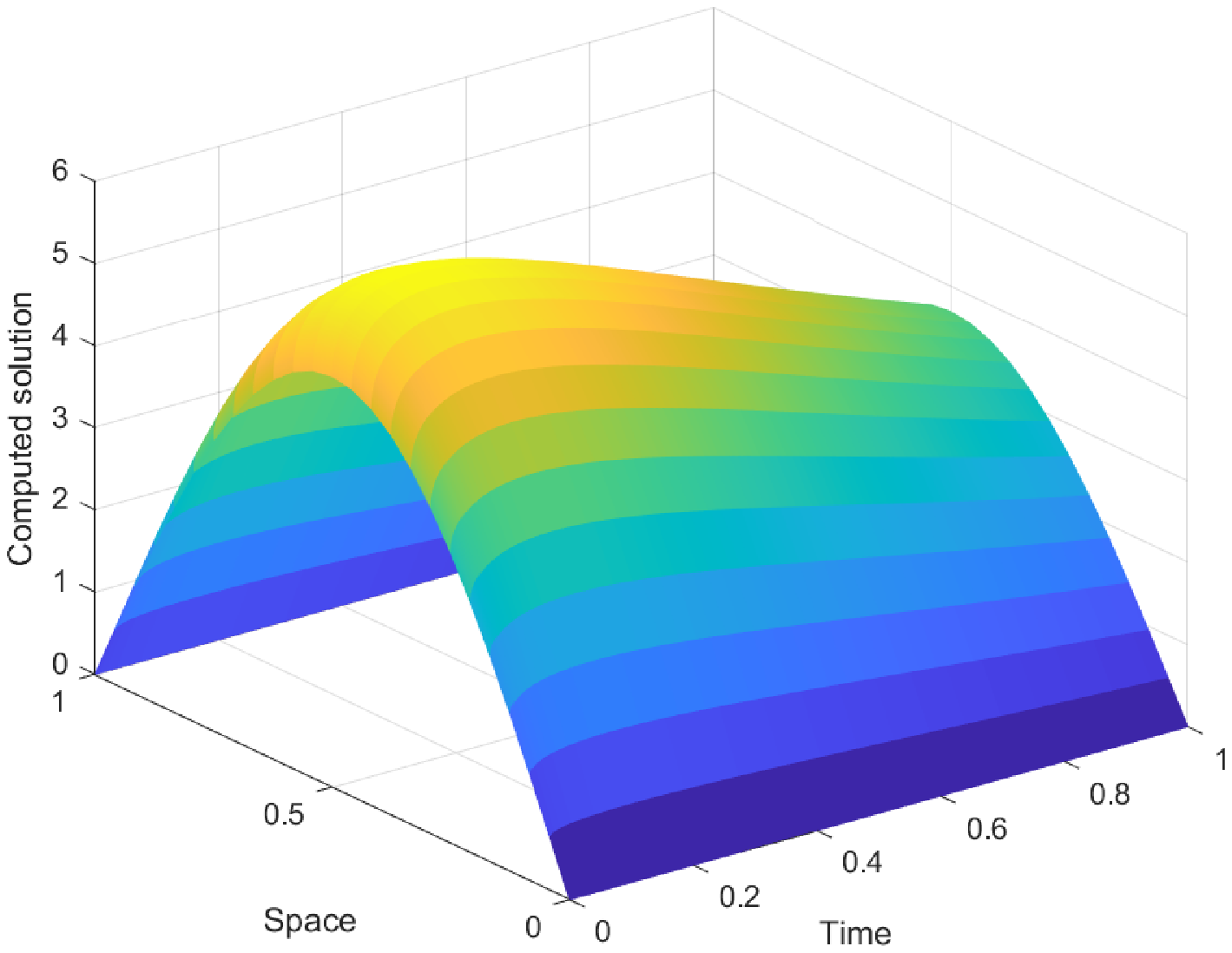}
\includegraphics[width=2.01in,height=2.05in]{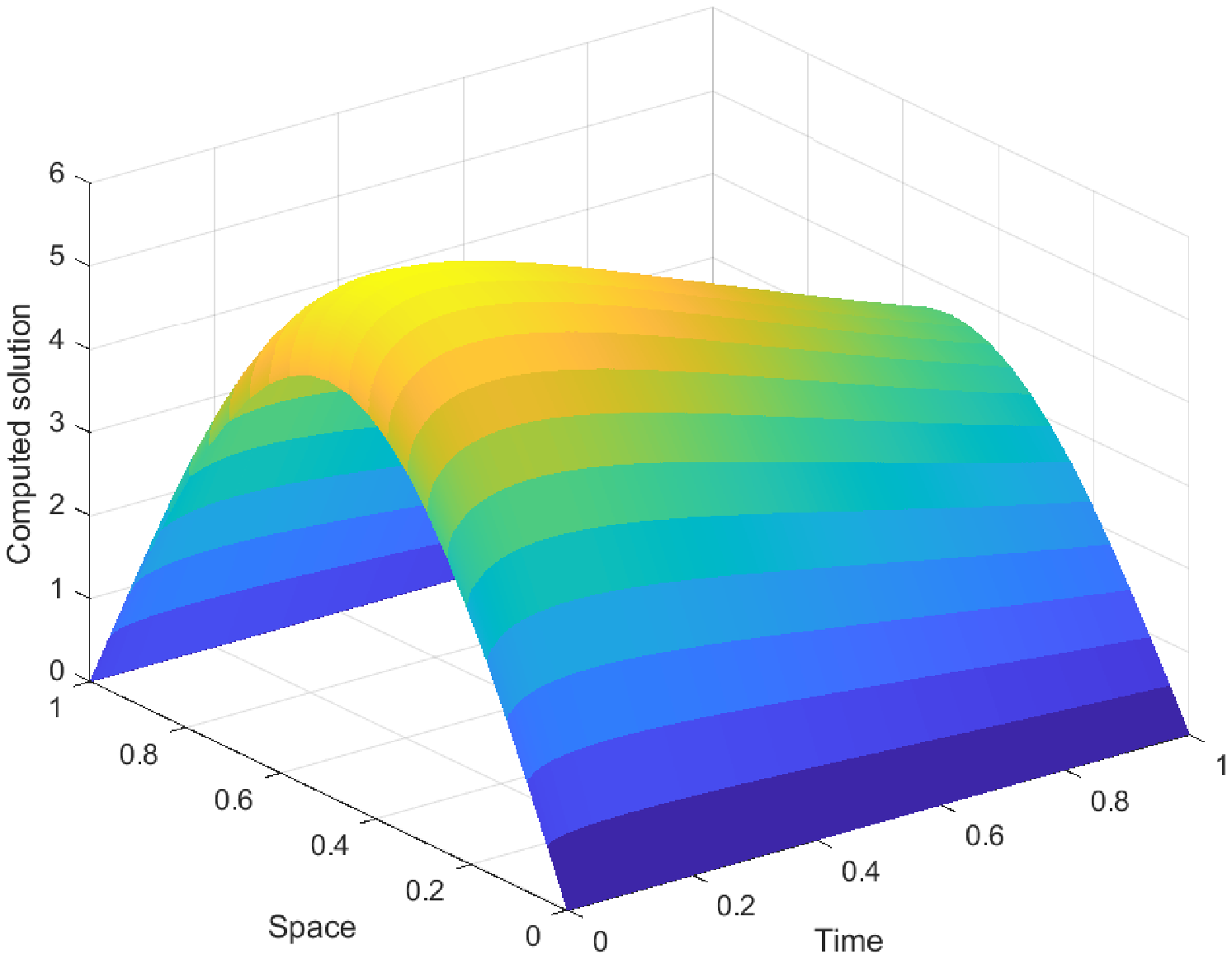}
\caption{Exact and numerical solutions of Example 1 at $\alpha = 0.5$
with $(\alpha,\lambda) = (0.5,1)$, $N = 24$ and $M(N) = \lceil N^{4/\min\{\gamma\alpha,
2-\alpha\}}\rceil$. Left: exact solution; Middle: DS (\ref{fourth-order-scheme}); Right: FS (\ref{fast-scheme}).}
\label{fig3x}
\end{figure}

\textbf{Example 2}. The second example reads the following tempered TFBS model
with nonhomogeneous BCs.
\begin{equation}
\begin{cases}
{}^{C}_0\mathbb{D}^{\alpha,\lambda}_{\tau}U(x,\tau) = \frac{\sigma^2}{2}
\frac{\partial^2 U(x,\tau)}{\partial x^2} + c \frac{\partial U(x,\tau)
}{\partial x} -rU(x,\tau) + f(x,\tau), &(x,\tau)\in(0,1)\times(0,1],\\
U(x,0) = x^4 + x^3 + x^2 + 1, & x\in[0,1],\\
U(0,\tau) = e^{-\lambda\tau}(\tau^{\alpha} + 1),\quad U(1,\tau) =
4e^{-\lambda\tau}(\tau^{\alpha} + 1), &\tau\in(0,1],
\end{cases}
\label{array4}
\end{equation}
where the source term is defined as
\begin{equation*}
\begin{split}
f(x,\tau) & = e^{-\lambda\tau}\Big[\Gamma(1 + \alpha)(x^4 + x^3 + x^2 + 1)
- \frac{\sigma^2}{2}(12x^2 + 6x + 2)(\tau^{\alpha}+1) - c(4x^3 \\
&\quad~ + 3x^2 + 2x)(\tau^{\alpha} + 1) + r(x^4 + x^3 + x^2 + 1)(\tau^{\alpha} + 1)\Big]
\end{split}
\end{equation*}
such that the exact solution of model (\ref{array4}) reads $U(x,\tau) = e^{
-\lambda\tau}(\tau^{\alpha}+1)(x^4 + x^3 + x^2 + 1)$. Moreover, the parameter values
are $r = 0.03, D = 0.01$ and $\sigma = 0.45$.

To solve Example 2, we rewrite it into the equivalent form of (\ref{eq2.1}) in order to apply
the proposed schemes (\ref{fourth-order-scheme}) and (\ref{fast-scheme}). By choosing
the appropriate graded time steps, numerical results of Example 2 are listed in Tables
\ref{tab5}--\ref{tab7}, which demonstrate that the proposed methods work very well with
the temporal $\min\{\gamma\alpha,2-\alpha\}$-order and spatial fourth-order convergence
accuracies for the tempered time-fractional TFBS model with nonhomogeneous BCs. Compared
to the direct difference scheme, the fast difference scheme based on the SOE approximation
of the graded L1 formula does not lose the accuracy with fitted tolerance $\epsilon$ and
it also can save much computational cost in terms of the elapsed CPU time, especially when
the number of time steps is increasingly large.

%\begin{figure}[!htpb]
%\centering
%\includegraphics[width=2.01in,height=1.95in]{figure2a}
%\includegraphics[width=2.01in,height=1.95in]{figure2b}
%\includegraphics[width=2.01in,height=1.95in]{figure2c}
%\caption{The put option prices at $\alpha = 0.5$ with
%different tempered indices $\lambda$ at $S=[e^{-4},e^{4}]$}
%\label{fig2x}
%\end{figure}

%%%%%%%%%%%%%%%%%%%%%%%%%%%%%%%%%%% {Table 5} %%%%%%%%%%%%%%%%%%%%%%%%%%%%%%%%%%%%%%%%%%%%%%%%%
\begin{table}[t]
	\begin{center}
		\caption{Spatial convergence order of two proposed compact difference schemes for Example 2 with
$M(N) = \lceil N^{4/\min\{\gamma\alpha,2-\alpha\}}\rceil$ and $\lambda = 1$.}
		\centering
		\begin{tabular}{cccccccc}
			\hline
			& & \multicolumn{3}{c}{DS \eqref{fourth-order-scheme}} & \multicolumn{3}{c}{FS \eqref{fast-scheme}} \\
			[-2pt] \cmidrule(lr){3-5} \cmidrule(lr){6-8} \\ [-11pt]
			 $(\alpha,\gamma)$ & $N$ & $e_{\infty}(M,N)$ & ${Order}_h$ & $\mathrm{Time}$ & $e_{\infty}(M,N)$ & ${Order}_h$ & $\mathrm{Time}$ \\
			\hline
			(0.3, 4) & 4  & 8.5897e-4  & --     &0.007   & 8.5897e-4 & --     & 0.028   \\
			         & 8  & 5.5574e-5  & 3.9501 &0.414   & 5.5574e-5 & 3.9501 & 0.358   \\
			         & 16 & 3.4962e-6  & 3.9920 &33.678  & 3.4962e-6 & 3.9920 & 4.516   \\
			         & 32 & 2.1896e-7  & 3.9970 &3352.30 & 2.1896e-7 & 3.9970 & 55.893  \\
			%64 & 256 & OoM & $>5$ hours & (91.0, 92.0) & 1913.149 & (9.0, 5.0) & 358.067 \\
			\hline
			(0.5, 3) & 6  & 3.9132e-4  & --     &0.006   & 3.9132e-4 & --     & 0.027    \\
			         & 12 & 2.6851e-5  & 3.8653 &0.197   & 2.6851e-5 & 3.8653 & 0.201    \\
			         & 24 & 1.7280e-6  & 3.9578 &7.134   & 1.7280e-6 & 3.9578 & 1.557    \\
			         & 48 & 1.0966e-7  & 3.9780 &285.720 & 1.0966e-7 & 3.9780 & 11.961  \\
%			& 256 & OoM & $>5$ hours & (278.0, 544.0) & $\approx 3.1$ hours & (7.0, 5.0) & 354.081 \\
			\hline	
            (0.8, 2) & 4  & 6.5572e-4  & --     &0.007   & 6.5572e-4 & --     & 0.019    \\
			         & 8  & 4.3348e-5  & 3.9190 &0.418   & 4.3348e-5 & 3.9190 & 0.209    \\
			         & 16 & 2.7691e-6  & 3.9685 &33.538  & 2.7691e-6 & 3.9685 & 2.552    \\
			         & 32 & 1.7325e-7  & 3.9985 &3362.85 & 1.7325e-7 & 3.9985 & 30.974   \\
%			& 256 & OoM & $>5$ hours & (154.0, 333.0) & 6901.452 & (8.0, 5.0) & 359.428 \\
%			\\		
%			(0.9, 1.9) & 16 & 15.910 & 0.075 & (18.0, 72.0) & 1.073 & (4.0, 3.0) & 0.173 \\
%			& 32 & $>5$ hours & 1.910 & (22.0, 202.0) & 13.323 & (4.0, 3.0) & 0.740 \\
%			& 64 & $>5$ hours & 112.491 & (22.0, 644.0) & 229.510 & (4.0, 4.0) & 4.962 \\
%			& 128 & $>5$ hours & $\approx 3.7$ hours & \dag & \dag & (3.0, 4.0) & 36.029 \\
%			& 256 & OoM & $>5$ hours & \dag & \dag & (3.0, 4.0) & 323.964 \\
			\hline
		\end{tabular}
		\label{tab5}
	\end{center}
\end{table}

\begin{table}[!htpb]
	\begin{center}
		\caption{Temporal convergence order of two proposed compact difference schemes for Example 2 with
$N(M) = \lceil M^{\min\{\gamma\alpha,2-\alpha\}/4}\rceil$ and $\lambda = 1$.}
		\centering
		\begin{tabular}{cccccccc}
			\hline
			& & \multicolumn{3}{c}{DS \eqref{fourth-order-scheme}} & \multicolumn{3}{c}{FS \eqref{fast-scheme}} \\
			[-2pt] \cmidrule(lr){3-5} \cmidrule(lr){6-8} \\ [-11pt]
			 $(\alpha,\gamma)$ & $M$ & $e_{\infty}(M,N)$ & $Order_{\Delta\tau}$& $\mathrm{Time}$ & $e_{\infty}(M,N)$
& ${Order}_{\Delta\tau}$ & $\mathrm{Time}$ \\
			\hline
			(0.3, 4) & 800  & 7.4809e-5  & --     &0.262  & 7.4809e-5   & --     &0.274  \\
			         & 1600 & 3.2779e-5  & 1.1904 &0.943  & 3.2779e-5   & 1.1904 &0.593   \\
			         & 3200 & 1.4284e-5  & 1.1984 &3.441  & 1.4284e-5   & 1.1984 &1.260   \\
			         & 6400 & 6.2129e-6  & 1.2011 &13.112 & 6.2129e-6   & 1.2011 &2.694  \\
			%64 & 256 & OoM & $>5$ hours & (91.0, 92.0) & 1913.149 & (9.0, 5.0) & 358.067 \\
			\hline
			(0.5, 3) & 640  & 3.4280e-5  & --     &0.138  & 3.4280e-5   & --     &0.169  \\
			         & 1280 & 1.2276e-5  & 1.4815 &0.554  & 1.2276e-5   & 1.4815 &0.365  \\
			         & 2560 & 4.3921e-6  & 1.4829 &2.065  & 4.3921e-6   & 1.4829 &0.782  \\
			         & 5120 & 1.5645e-6  & 1.4892 &8.058  & 1.5645e-6   & 1.4892 &1.659  \\
%			& 256 & OoM & $>5$ hours & (278.0, 544.0) & $\approx 3.1$ hours & (7.0, 5.0) & 354.081 \\
			\hline	
            (0.8, 2) & 640  & 7.7152e-5  & --     &0.170  & 7.7152e-5   & --     & 0.126 \\
			         & 1280 & 3.3781e-5  & 1.1915 &0.623  & 3.3781e-5   & 1.1915 & 0.267 \\
			         & 2560 & 1.4712e-5  & 1.1992 &2.263  & 1.4712e-5   & 1.1992 & 0.569 \\
			         & 5120 & 6.3887e-6  & 1.2034 &8.499  & 6.3887e-6   & 1.2034 & 1.194 \\
%			& 256 & OoM & $>5$ hours & (154.0, 333.0) & 6901.452 & (8.0, 5.0) & 359.428 \\
			\hline
		\end{tabular}
		\label{tab7}
	\end{center}
\end{table}

\textbf{Example 3}. We then consider the tempered TFBS model describing a double
barrier knock-out option under a truncated domain $\widehat{\Omega}\cup\partial
\widehat{\Omega} = [S_l, S_r]$, which is modified from \cite{Staelen}.
\begin{equation}
\begin{cases}
\frac{\partial^{\alpha,\lambda}C(S,t)}{\partial t^{\alpha,\lambda}} + \frac{1}{2}\sigma^2S^2
\frac{\partial^2 C(S,t)}{\partial S^2} + \hat{r}S\frac{\partial C(S,t)}{\partial S}
-rC(S,t) = 0,& (S,t)\in\widehat{\Omega}\times[0,T),\\
C(S,T) = \max(S - K,0),& S\in\widehat{\Omega},\\
C(S,t) = 0,& (S,t)\in\partial\widehat{\Omega}\times[0,T),
\end{cases}
\end{equation}
Here, the model parameters are set as $r = 0.03$, $K = 10$, $\sigma = 0.45$, $T=1$ (year),
$D = 0.01$, and $(S_l, S_r) = (2,15)$.

Fig. \ref{fig1kux} displays the double barrier option price of Example 3 solved by the proposed
numerical scheme (\ref{fast-scheme}) for different choices $(\alpha,\lambda)$ and fixed $N = 32$,
where the grading parameter $\gamma$ and $M(N)$ time steps are chosen as Examples 1--2. It is observed
that the characteristics of Figure \ref{fig1kux} are consistent with \cite[Fig. 3]{Chen15}. Moreover,
we can find from Figure \ref{fig1kux} that the double barrier option price will be influenced by the fractional
order $\alpha$ and tempered index $\lambda$. Compared with the classical BS and TFBS models
\cite[Figure 2]{Kerui21}, the tempered TFBS model
does not overprice the double barrier knock-out option when the underlying is close
to the lower barrier. From a certain underlying value and onwards (close to the strike price $S$),
the tempered TFBS model also does not underprice the price of options. It can be noticed that
the smaller $\alpha$ and $\lambda$ are, the larger pricing bias becomes. This suggests that the tempered TFBS model
may capture the (memory) characteristics of the significant movements more accurately.
\begin{figure}[t]
\centering
\includegraphics[width=2.85in,height=2.25in]{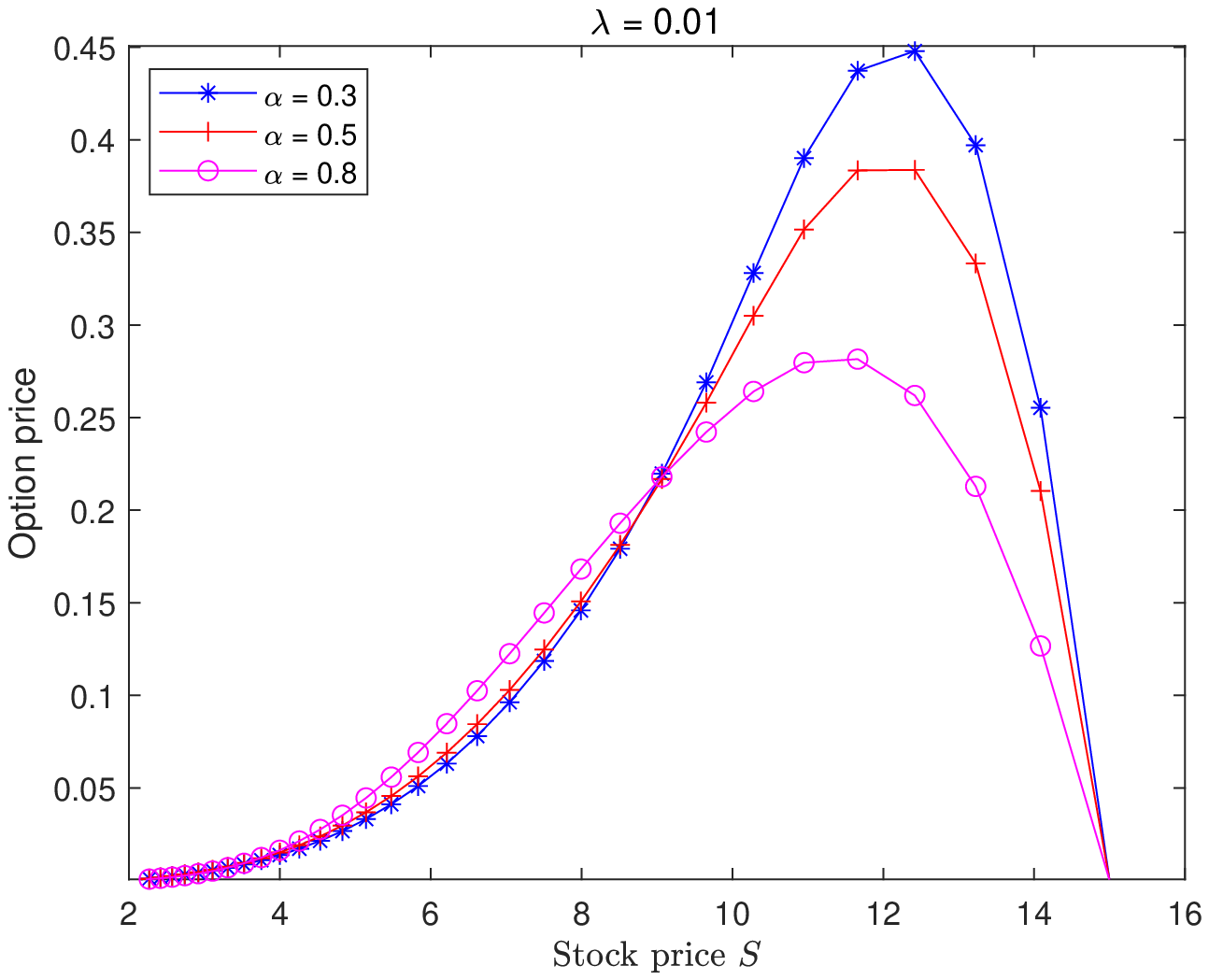}
\includegraphics[width=2.85in,height=2.25in]{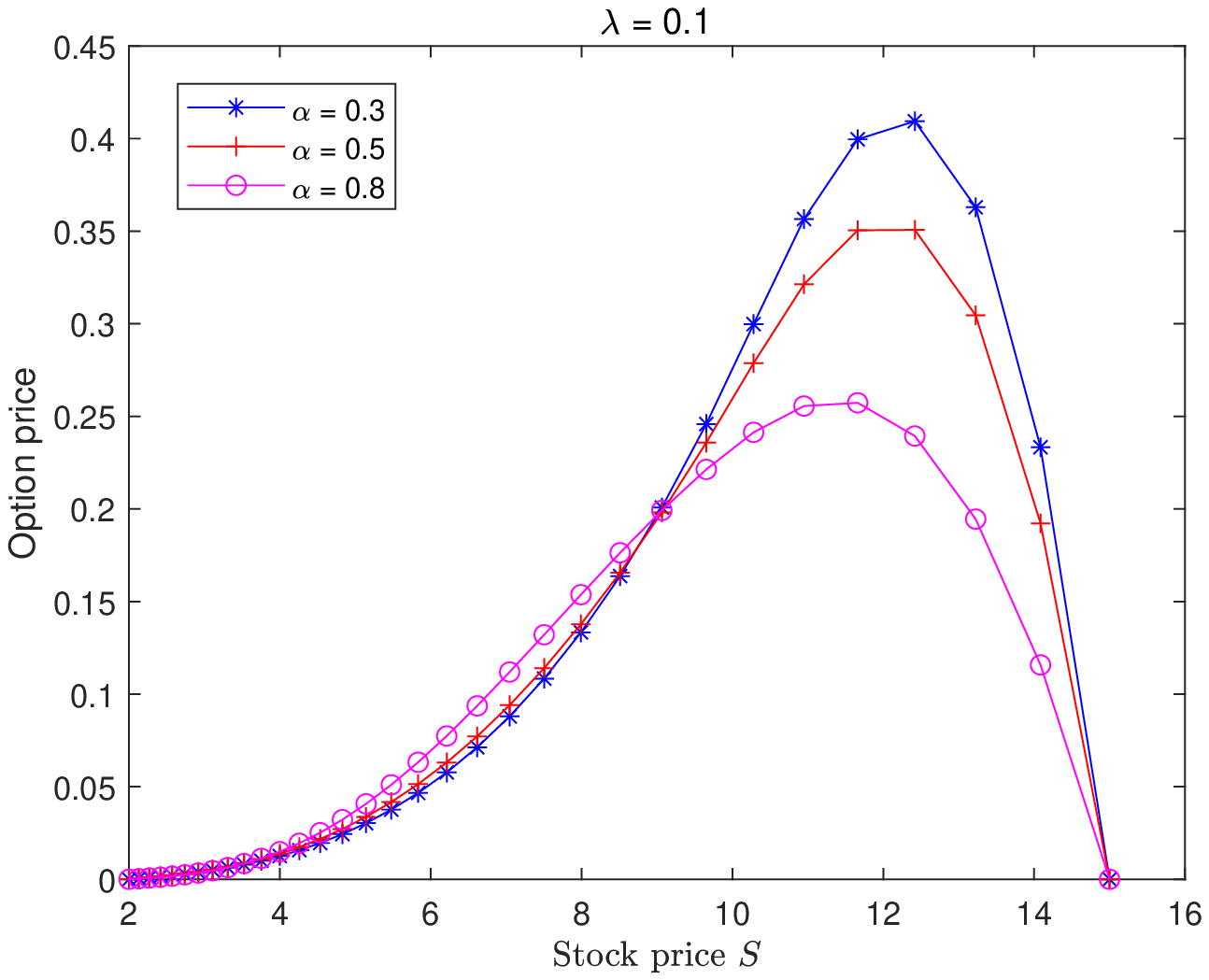}
\includegraphics[width=2.85in,height=2.25in]{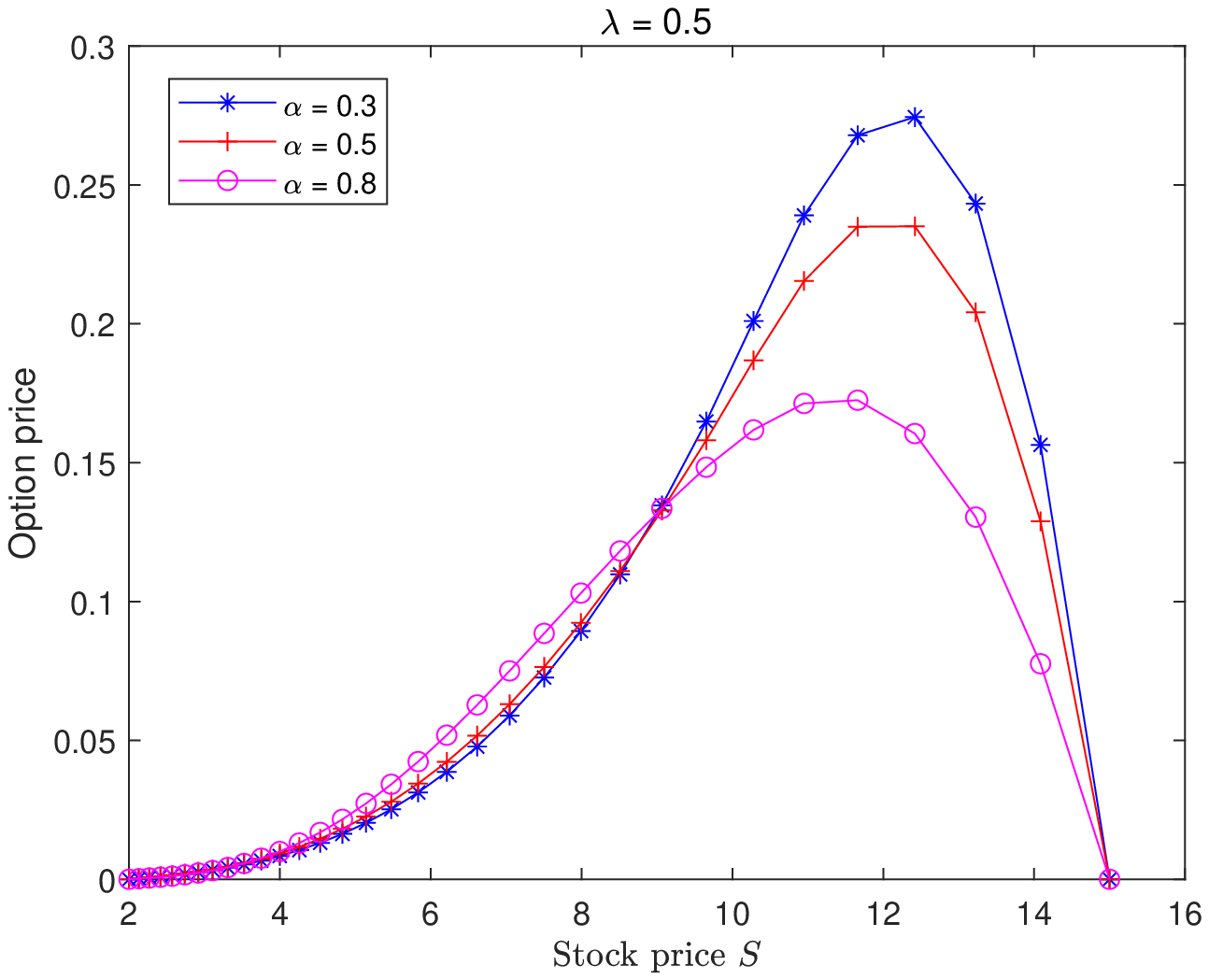}
\includegraphics[width=2.85in,height=2.25in]{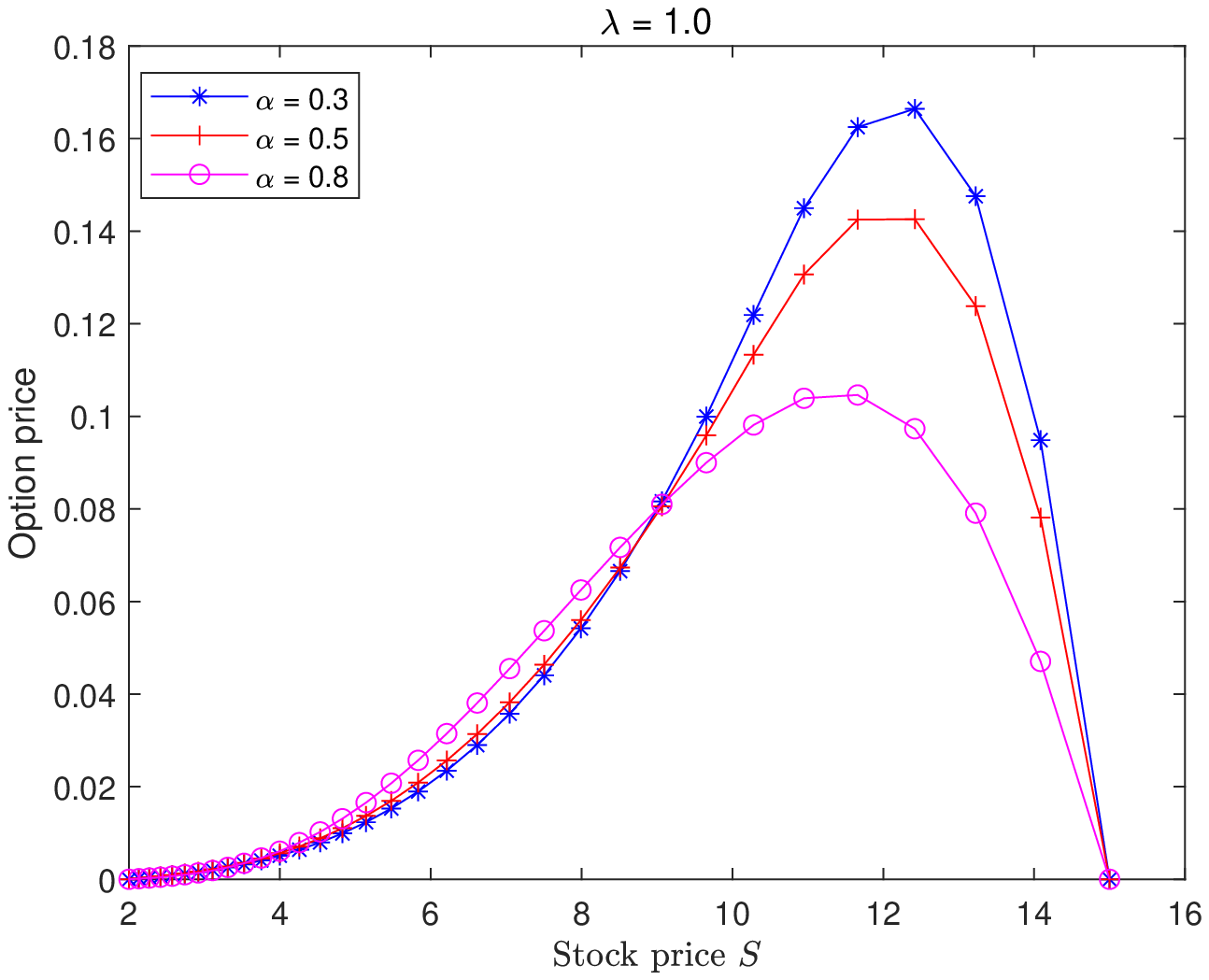}
\caption{Double barrier option prices of Example 3 at the different orders $\alpha$ and tempered indices $\lambda$.}
\label{fig1kux}
\end{figure}

%\vspace{1.5mm}
\textbf{Example 4}. Consider the following tempered TFBS model
governing the European put options
\begin{equation}
\begin{cases}
%_k^2e_k^Tf_j\left[
%{c}
%B_k^{-1}e_k\\0\\
%, we have
\frac{\partial^{\alpha,\lambda}C(S,t)}{\partial t^{\alpha,\lambda}} + \frac{1}{2}\sigma^2S^2
\frac{\partial^2 C(S,t)}{\partial S^2} + \hat{r}S\frac{\partial C(S,t)}{\partial S}
-rC(S,t) = 0,& (S,t)\in(0,+\infty)\times[0,T),\\
C(S,T) = \max(K-S,0),& S\in(0,+\infty),\\
C(0,t) = \phi(t),\quad \lim\limits_{S\rightarrow +\infty}C(S,t) = 0,& t\in[0,T),
\end{cases}
\end{equation}
where the parameters are set as $r=0.05$, $K=50$, $\sigma = 0.25$, $D=0$, $T=1~{\rm(year)}$ and $\phi(t) = Ke^{-r(T-t)}$.
In this example, we need to compute the Mittage-Lefflier (M-L) function $E_{1,2-\alpha}(z)$ ($z\in\mathbb{R}$) (numerically)\footnote{Evaluation of the M-L function with 2 parameters: \url{https://www.mathworks.com/matlabcentral/fileexchange/48154-the-mittag-leffler-function}.} when
we transform this model to the equation like Eq. (\ref{eq2.1}).

We now apply the proposed fast numerical scheme (\ref{fast-scheme}) which is better
than the scheme (\ref{fourth-order-scheme}) to solve
Example 3 for different values of $(\alpha, \lambda)$ and fixed $N = 32$. The curves of the
put option price are plotted in Fig. \ref{fig1}, where the grading parameter $\gamma$ and
$M(N)$ time steps are chosen as the same as Examples 1--2. As can be seen from them, the
order of fractional derivative $\alpha$ and the (large) tempered index $\lambda$ have an effect
on the prices of European put options where the underlying assets display characteristic
periods in which they remain motionless \cite{Wang2016}.
\begin{figure}[t]
\centering
\includegraphics[width=2.85in,height=2.35in]{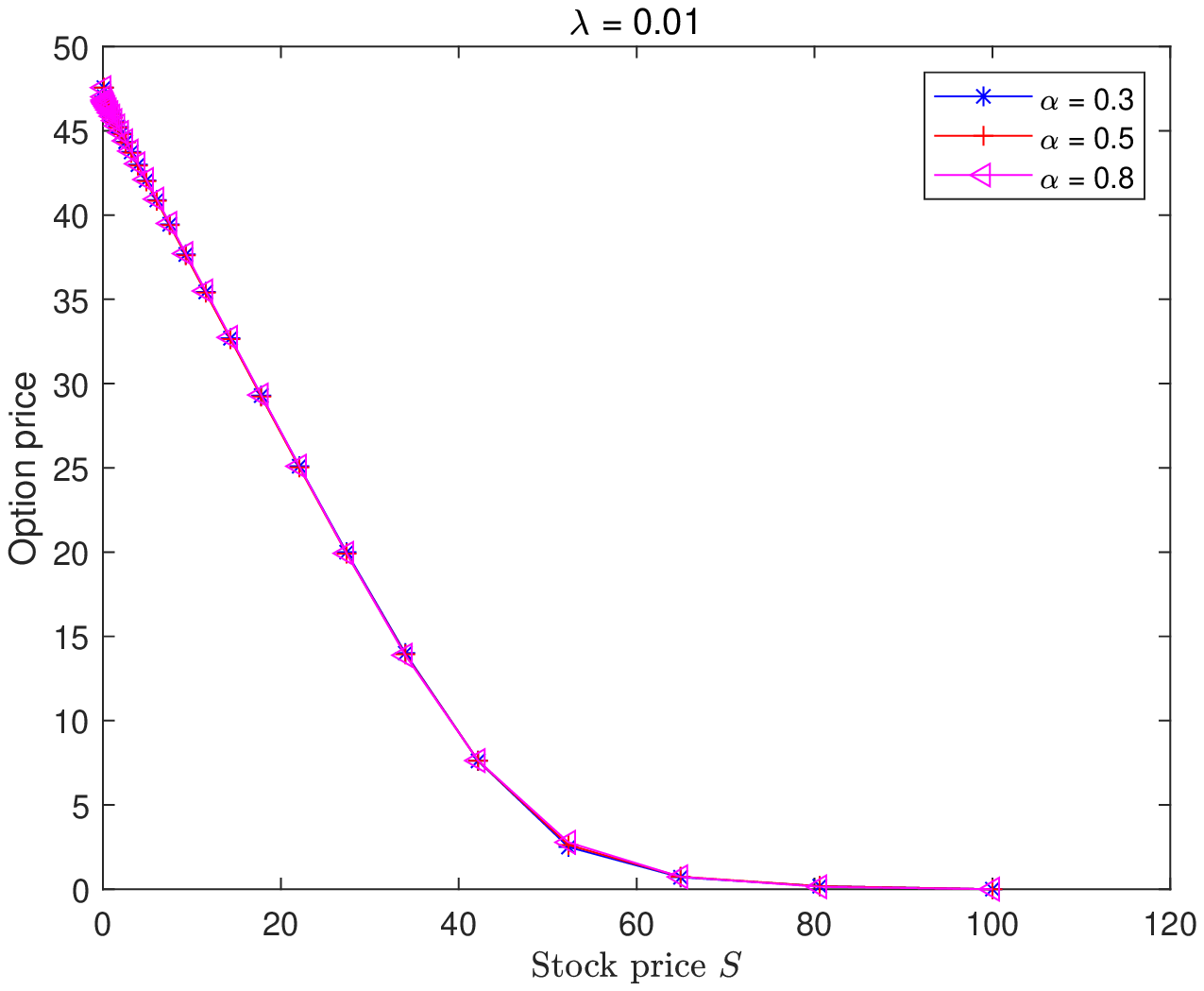}
\includegraphics[width=2.85in,height=2.35in]{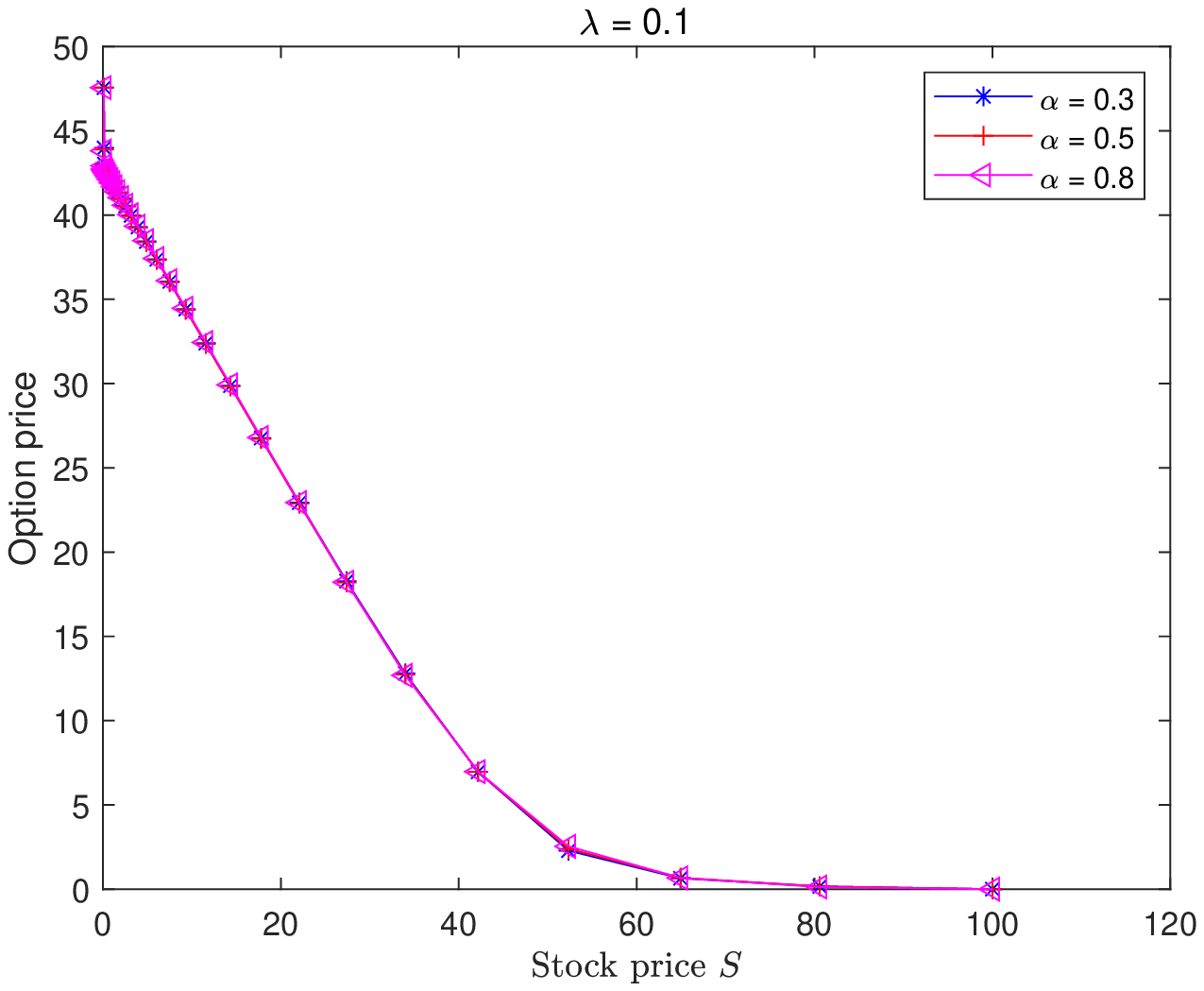}
\includegraphics[width=2.85in,height=2.35in]{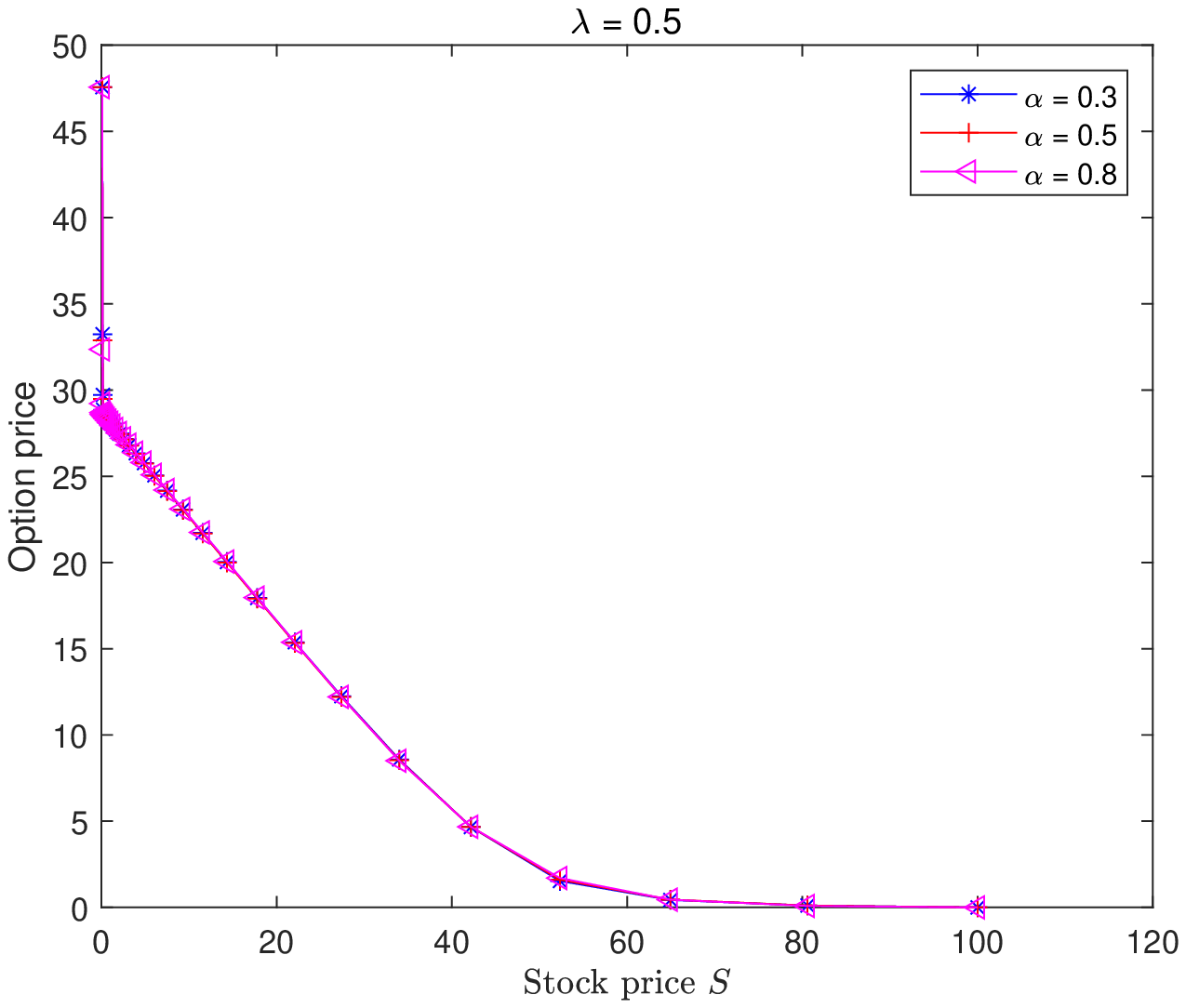}
\includegraphics[width=2.85in,height=2.35in]{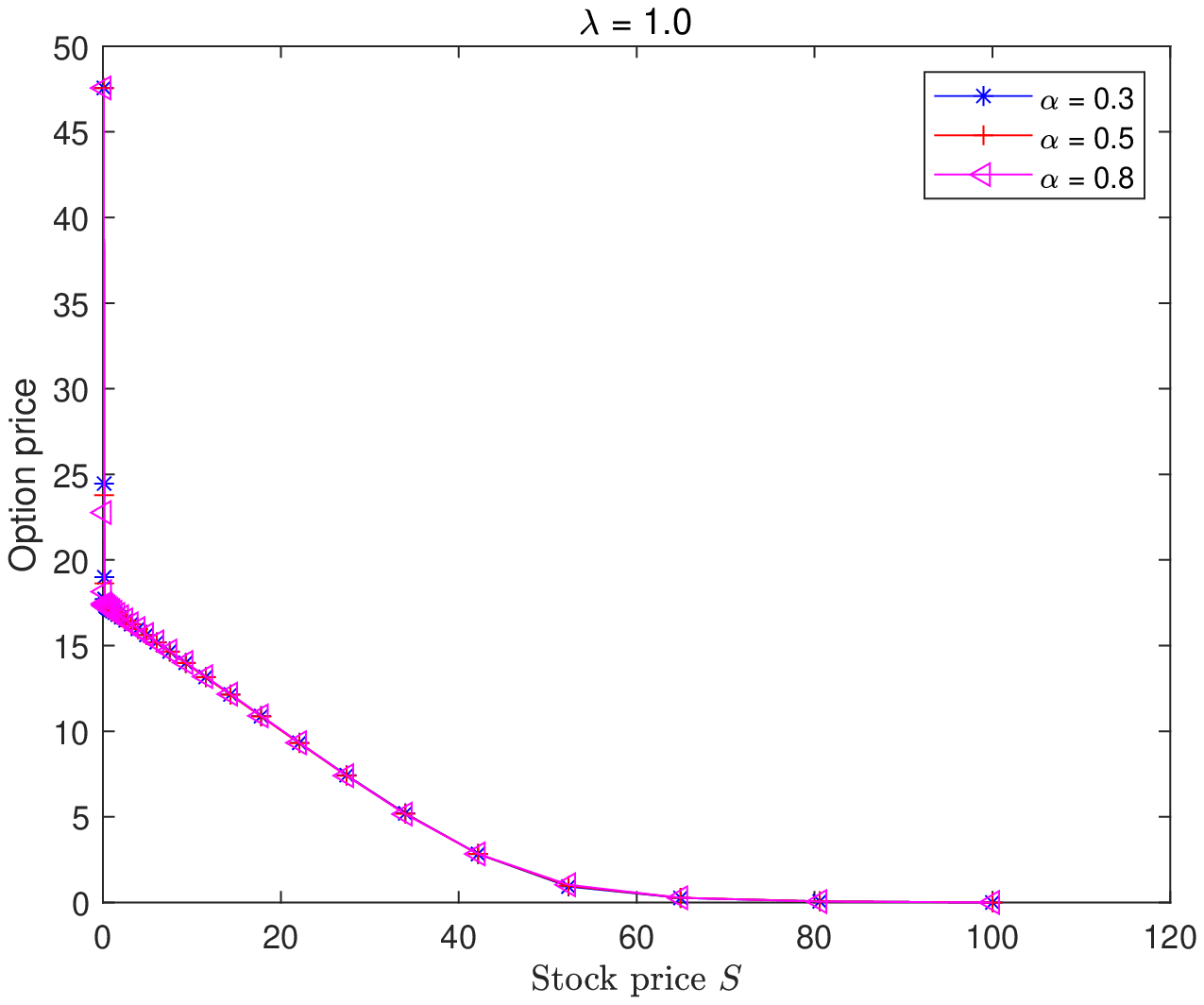}
\caption{The put option prices simulated via Example 4 at the different fractional orders
$\alpha$ and tempered indices $\lambda$. (the domain is truncated as $[S_l,S_r] = [0.1,100]\subset [0,+\infty]$)}
\label{fig1}
\end{figure}
%%%%
%%%%%%%%%%%%%%%%%
%                                                 \end{array}
%                                               \right]-\hat{\lambda}_j\left[
%                                          \begin{array}{cc}
%                                            X_k &  \\
%                                             & Y_k \\
%                                          \end{array}
%                                        \right]\left[
%
%as the stopping criterion, here $f_{jk}$ is the $k$-th element of $f_j$.
%
%
%
\section{Conclusions}
\label{sec5}
In this study, we proposed the direct and fast compact difference schemes with unequal time-steps for the tempered TFBS model.
The numerical methods are constructed by the compact difference operator and the (fast) tempered L1 formula with
nonuniform time steps for spatial and temporal derivatives, respectively. The unconditional stability and convergence of $\min\left\{\gamma\alpha,2-\alpha\right\}$-order
in time and fourth-order in space are rigorously proved by mathematical induction. Numerical examples are included
and the corresponding results indicated that the proposed numerical method works very precise and fast.

In addition, since the financial payoff function of the option at the strike price is often non-smooth, the
compact difference method maybe cannot reach the convergence of the fourth-order accuracy in space. Such
a numerical deficiency can be remedied by using the piecewise uniform mesh \cite{Cen2018} and/or local mesh refinement
\cite{Lee2012} and it should not affect the effectiveness of our (fast) temporal
discretization. This will be our future research direction.
%\vspace{-2mm}
%%%%%%%%%%%%%%%%%%%%%%%%%%%%%%%%%%
%%%%%%%%%%%%%%%%%%%%%%%%%%%%
\appendix
\section{Appendix}
\label{appd}
For clarity, the regularity of the solution $v$ of Eq. (\ref{eq2.1}) is discussed
in this appendix. It helps us to find that $v$ is smooth away from $\tau = 0$ but it has
in general a certain singular behaviour at $\tau = 0$.

Motivated by \cite{Stynes17,Wang2022}, we use the separation of variables to construct a classical solution
$v(x,\tau)$ of (\ref{eq2.1}) in the form of an infinite series. Let $\{(\mu_i, \psi_i):
i = 1, 2,\ldots\}$ be the eigenvalues and eigenfunctions for the Sturm-Liouville two-point
boundary value problem
\begin{equation}
\mathcal{L}\psi_i:= -\frac{\sigma^2}{2}\psi''_i + q\psi_i = \mu_i\psi_i~~{\rm on}~(x_l,x_r),
\quad\psi_i(x_l) =\psi_i(x_r)\equiv 0,
\end{equation}
where the eigenfunctions are normalised by requiring $\|\psi_i\|_2 = 1$ for all
$i$. It is well known that $\mu_i > 0$ for all $i$. A standard separation of variables
technique construct an infinite series solution to Eq. (\ref{eq2.1}) with the form
\begin{equation*}
v(x,\tau) = \sum^{\infty}_{i=1}v_i(\tau)\psi_i(x).
\end{equation*}
For clarity, we denote $v_i(\tau) = \langle v(\cdot,\tau),\psi_i(x)\rangle$, $\widetilde{\sigma}_i = \langle\sigma(\cdot),\psi_i(\cdot)\rangle$, $g_i(\tau) = \langle v(\cdot,\tau),\psi_i(\cdot)\rangle$, $Q = \Omega\times(0,T]$, $\bar{Q} = \bar{\Omega}\times(0,T]$ and $\bar{\Omega} = \partial\Omega\cup\Omega$.
Then we have
\begin{equation*}
{}^{C}_0\mathbb{D}^{\alpha,\lambda}_{\tau}v_i(\tau) = -\mu_iv_i(\tau) + g_i(\tau).
\end{equation*}
With the help of Laplace transform, one obtains
\begin{equation*}
(s + \lambda)^{\alpha}\hat{v}_i(s) - (s + \lambda)^{\alpha-1}\widetilde{\sigma}_i =
-\mu_i\hat{v}_i(s) + \hat{g}_i(s),
\end{equation*}
or equivalently \cite[(62)]{Li2015}
\begin{equation}
\hat{v}_i(s) = \frac{(s + \lambda)^{\alpha-1}\widetilde{\sigma}_i
+ \hat{g}_i(s)}{(s + \lambda)^{\alpha} + \mu_i}.
\end{equation}
%%%%%%%%%%%%%%
Hence, using the inverse Laplace transform and two-parameter M-L function
\cite{Ishteva05} defined by
\begin{equation*}
E_{\alpha,\beta}(z) := \sum^{\infty}_{k=0}\frac{z^k}{\Gamma(\alpha z + \beta)},\quad
\Re e(\alpha) > 0,~\Re e(\beta) > 0,~z\in\mathbb{C},
\end{equation*}
we can obtain the following form
\begin{equation}
v(x,\tau) = \sum^{\infty}_{i=1}\left[\widetilde{\sigma}_iG_{i}(\tau)
+ F_{i}(\tau)\right]\psi_i(x)
\label{Eq.A1}
\end{equation}
--see \cite[(2.8)]{Wang2022}--where $G_i(\tau) = e^{-\lambda\tau}E_{\alpha,1}(-\mu_i\tau^{\alpha})$
and $F_i(\tau) = \int^{\tau}_0e^{-\lambda s}s^{\alpha-1}E_{\alpha,\alpha}(-\mu_i s^{\alpha})
g_i(\tau-s)ds$.
%Here we suppose that the eigenvalues sequence
%$\{\mu_i\}^{\infty}_{i=1}$ is non-decreasing, there exists an integer $i_0$ such that
%$a_i > 0$ for all $i \geq i_0$ and $a_i \leq 0$ for all $i < i_0$.

With the help of sectorial operator \cite{Sakamoto}, for each $\nu\in\mathbb{R}$ the fractional power $\mathcal{L}^{\nu}$ of the operator $\mathcal{L}$ is defined
with the domain
\begin{equation*}
H(\mathcal{L}^{\nu}) := \Big\{g\in L_2(\Omega): \sum^{\infty}_{i=1}\mu^{2\nu}_i|\langle g,\psi_i\rangle|^2\Big\},
\end{equation*}
and the norm
\begin{equation*}
\|g\|_{\mathcal{L}^{\nu}} = \Bigg(\sum^{\infty}_{i=1}\mu^{2\nu}_i|\langle g,\psi_i\rangle|^2\Bigg)^{1/2}.
\end{equation*}
%%%%%%%%%%%%%%%%%%%%
\begin{theorem}
Let $v$ be the solution of Eq. (\ref{eq2.1}). Then for $\forall \epsilon > 0$, there exists the constant $C$ independent of $\tau$ such that the following
conclusions are available:
\begin{itemize}
\item[\romannumeral1)] if $\sigma\in H\left(\mathcal{L}^{\frac{1+\epsilon}{2}}\right), g(\cdot,\tau)\in H\left(\mathcal{L}^{\frac{1+\epsilon}{2}}\right)$ for $\forall \tau\in[0,T]$, then $|v(x,\tau)|\leq C$ for
$(x,\tau)\in \bar{Q}$;
\item[\romannumeral2)] if $\sigma\in H\left(\mathcal{L}^{\frac{3+\epsilon}{2}}\right), g(\cdot,\tau)\in H\left(\mathcal{L}^{\frac{1+\epsilon}{2}}\right)$ for $\forall \tau\in[0,T]$, then
$\|g_{tau}(\cdot,\tau)\|_{\mathcal{L}^{\frac{1+\epsilon}{2}}}\leq C\tau^{-\rho}$ ($\rho < 1)$ for $\forall \tau\in (0,T]$,
then $|v_{\tau}(x,\tau)\leq \tau^{\alpha-1}$ for $\forall (x,\tau)\in Q$.
\end{itemize}
\end{theorem}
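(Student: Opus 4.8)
The plan is to analyse the infinite--series representation \eqref{Eq.A1} term by term, establishing the two bounds by combining sharp decay estimates for the Mittag--Leffler functions with the Cauchy--Schwarz inequality against the fractional norms $\|\cdot\|_{\mathcal{L}^{\nu}}$. The ingredients I would record first are the standard uniform bounds $|E_{\alpha,1}(-x)|\le C/(1+x)$ and $|E_{\alpha,\alpha}(-x)|\le C/(1+x)$ for all $x\ge 0$, together with the differentiation identity $\frac{d}{d\tau}E_{\alpha,1}(-\mu_i\tau^{\alpha})=-\mu_i\tau^{\alpha-1}E_{\alpha,\alpha}(-\mu_i\tau^{\alpha})$. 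I would also use that the one--dimensional Sturm--Liouville eigenvalues satisfy $\mu_i\sim i^2$ (so that $\sum_{i}\mu_i^{-(1+\epsilon)}<\infty$) and that the $L_2$--normalised eigenfunctions are uniformly bounded, $\|\psi_i\|_{\infty}\le C$; the role of the surplus $\epsilon>0$ is precisely to guarantee convergence of these eigenvalue series.

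For claim (i), I would bound $|G_i(\tau)|=e^{-\lambda\tau}|E_{\alpha,1}(-\mu_i\tau^{\alpha})|\le C$ and, for the Duhamel part, substitute $u=\mu_i s^{\alpha}$ to get $|F_i(\tau)|\le C\mu_i^{-1}\ln(1+\mu_iT^{\alpha})\max_{\tau}|g_i(\tau)|$. Writing $v(x,\tau)=\sum_i[\widetilde{\sigma}_iG_i(\tau)+F_i(\tau)]\psi_i(x)$ and inserting the factor $\mu_i^{(1+\epsilon)/2}\mu_i^{-(1+\epsilon)/2}$ into each summand, Cauchy--Schwarz converts the sums into $\|\sigma\|_{\mathcal{L}^{(1+\epsilon)/2}}$ and $\sup_\tau\|g(\cdot,\tau)\|_{\mathcal{L}^{(1+\epsilon)/2}}$ multiplied by the convergent factors $(\sum_i\mu_i^{-(1+\epsilon)})^{1/2}$ and $(\sum_i\mu_i^{-(3+\epsilon)}\ln^2(1+\mu_iT^{\alpha}))^{1/2}$, which yields $|v(x,\tau)|\le C$ on $\bar{Q}$.

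For claim (ii), I would differentiate \eqref{Eq.A1} term by term, justifying the interchange by uniform convergence of the differentiated series. The identity above gives $G_i'(\tau)=-\lambda G_i(\tau)-\mu_i\tau^{\alpha-1}e^{-\lambda\tau}E_{\alpha,\alpha}(-\mu_i\tau^{\alpha})$, whose dominant piece is bounded by $C\tau^{\alpha-1}\mu_i/(1+\mu_i\tau^{\alpha})$; pairing this with $\|\sigma\|_{\mathcal{L}^{(3+\epsilon)/2}}$ via Cauchy--Schwarz leaves the uniformly convergent series $\sum_i\mu_i^{-(1+\epsilon)}(1+\mu_i\tau^{\alpha})^{-2}$ and produces the $C\tau^{\alpha-1}$ behaviour. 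For the source part I would view $F_i$ as a convolution and differentiate by Leibniz, obtaining $F_i'(\tau)=K_i(\tau)g_i(0)+\int_0^{\tau}K_i(s)g_i'(\tau-s)\,ds$ with $K_i(s)=e^{-\lambda s}s^{\alpha-1}E_{\alpha,\alpha}(-\mu_i s^{\alpha})$; the boundary term is handled exactly as the $G_i'$ term using $g(\cdot,0)\in H(\mathcal{L}^{(1+\epsilon)/2})$, while the integral term, after Cauchy--Schwarz in $i$ and the hypothesis $\|g_\tau(\cdot,\tau)\|_{\mathcal{L}^{(1+\epsilon)/2}}\le C\tau^{-\rho}$, reduces to the scalar estimate $\int_0^{\tau}s^{\alpha-1}(\tau-s)^{-\rho}\,ds=B(\alpha,1-\rho)\,\tau^{\alpha-\rho}\le C\tau^{\alpha-1}$, where the last step uses $\rho<1$ and $\tau\le T$.

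I expect the main obstacle to be the source (Duhamel) term in part (ii): one must differentiate a convolution with a weakly singular kernel $s^{\alpha-1}$ and $\tau$--dependent data $g_i(\tau-s)$, correctly allocating the singularity between the boundary evaluation $K_i(\tau)g_i(0)$ and the integral $\int_0^{\tau}K_i(s)g_i'(\tau-s)\,ds$, and then verify that the two competing singular rates $\tau^{\alpha-1}$ and $\tau^{\alpha-\rho}$ combine into the single clean bound $C\tau^{\alpha-1}$. The remaining steps---the Mittag--Leffler estimates and the Cauchy--Schwarz bookkeeping---are routine once the eigenvalue series are known to converge, and the whole argument follows the template of \cite{Wang2022} and \cite{Sakamoto}.
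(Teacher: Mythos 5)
Your proposal is correct and follows essentially the same route as the paper: the series representation \eqref{Eq.A1}, Mittag--Leffler decay bounds, Cauchy--Schwarz against the $\|\cdot\|_{\mathcal{L}^{\nu}}$ norms with a convergent eigenvalue series, and for part ii) term-by-term differentiation that splits $F_i'$ into the boundary term $K_i(\tau)g_i(0)$ plus the convolution with $g_i'$. In fact you supply details the paper leaves implicit (the Beta-function computation $\int_0^\tau s^{\alpha-1}(\tau-s)^{-\rho}\,ds = B(\alpha,1-\rho)\tau^{\alpha-\rho}\le C\tau^{\alpha-1}$, and the correct eigenvalue asymptotics $\mu_i\sim i^2$ where the paper writes $\mu_i\approx i$), but the underlying argument is the same.
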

%%%%%%%%%%%%%%%
\textbf{Proof}: \romannumeral1) With the help of Eq. (\ref{Eq.A1}), the triangle inequality yields that
\begin{equation}
\begin{split}
|v(x,\tau)| & = \sum^{\infty}_{i=1}|\widetilde{\sigma}_i G_i(\tau) + F_i(\tau)|\cdot |\psi_i(x)|\\
            &\leq \sum^{\infty}_{i=1}\Big[|\widetilde{\sigma}_i e^{-\lambda\tau}E_{\alpha,1}(-\mu_i\tau^{\alpha})|~+\\
            &\quad~\left|\int^{\tau}_0e^{-\lambda s}s^{\alpha-1}E_{\alpha,\alpha}(-\mu_i s^{\alpha})g_i(\tau-s)ds\right|\Big]\cdot|\psi_i(x)|,
\end{split}
\label{eqA.4}
\end{equation}
since $\sigma\in H(\mathcal{L}^{\frac{1+\epsilon}{2}})$, $g(\cdot,\tau)\in H(\mathcal{L}^{\frac{1+\epsilon}{2}})$, we have $\|\sigma\|_{L^{\frac{1+\epsilon}{2}}}\leq C$
and $\|g(\cdot,\tau)\|_{\mathcal{L}^{\frac{1+\epsilon}{2}}}\leq C$. In conclusion, we have $\mu_i\approx i$ and $|\psi_i(x)|\leq C$.

Consider the term in (\ref{eqA.4}). Using the Cauchy-Schwarz inequality and \cite[Lemma 2.4]{Wang2022}, we have
\begin{equation*}
\begin{split}
\sum^{\infty}_{i=1}|\widetilde{\sigma}_ie^{-\lambda\tau}E_{\alpha,1}(-\mu_i\tau^{\alpha})|& \leq C\sum^{\infty}_{i=1}|\widetilde{\sigma}_i|\cdot
|E_{\alpha,1}(-\mu_i\tau^{\alpha})|\\
%&\leq C \left(\sum^{i_0-1}_{i=1} + \sum^{\infty}_{i=i_0}\right)|\widetilde{\sigma}_i|\cdot|E_{\alpha,1}(-a_i\tau^{\alpha})|\\
&\leq C \Bigg(\sum^{\infty}_{i=1}\frac{1}{\mu^{1+\epsilon}_i}\Bigg)^{1/2}\Bigg(\sum^{\infty}_{i=1}\mu^{1+\epsilon}_i|\widetilde{\sigma}_i|^2\Bigg)^{1/2}\\
&\leq C \|\sigma\|_{\mathcal{L}^{\frac{1+\epsilon}{2}}}.
\end{split}
\end{equation*}
%By using the Cauchy-Schwarz inequality and \cite[Lemmas 2.1-2.2, Lemma 2.4]{Wang2022}, one has
%\begin{equation*}
%\begin{split}
%\sum^{\infty}_{i=1}\left|\widetilde{\sigma}_i\lambda\int^{\tau}_0e^{-\lambda s}E_{\alpha,1}(-a_i s^{\alpha})ds\right| & \leq C\sum^{\infty}_{i=1}|\widetilde{\sigma}_i|\left|\int^{\tau}_0 e^{-\lambda s}E_{\alpha,1}(-a_i s^{\alpha})ds\right|\\
%&\leq C\left(\sum^{i_0-1}_{i=1} + \sum^{\infty}_{i=i_0}\right)|\widetilde{\sigma}_i|\tau E_{\alpha,2}(-a_i\tau^{\alpha})\\
%&\leq C\left(\sum^{\infty}_{i=1}\frac{1}{\lambda^{1+\epsilon}_i}\right)^{1/2}\left(\sum^{\infty}_{i=1}\lambda^{1+\epsilon}_i|\widetilde{\sigma}_i|^2\right)^{1/2}\\
%&\leq C\|\sigma\|_{\mathcal{L}^{\frac{1+\epsilon}{2}}}.
%\end{split}
%\end{equation*}
Similarly, we can obtain
\begin{align}
%\begin{split}
\sum^{\infty}_{i=1}\left|\int^{\tau}_0e^{-\lambda s}s^{\alpha-1}E_{\alpha,\alpha}(-\mu_is^{\alpha})g_i(\tau-s)ds\right|&\leq
C\int^{\tau}_0\left|s^{\alpha-1}\sum^{\infty}_{i=1}E_{\alpha,\alpha}(-\mu_i s^{\alpha})g_i(\tau-s)\right|ds\nn\\
&\leq C\int^{\tau}_{0}s^{\alpha-1}\Bigg(\sum^{\infty}_{i=1}\frac{1}{\mu^{1+\epsilon}_i}(E_{\alpha,\alpha}(-\mu_i s^{\alpha}))^2\Bigg)^{1/2}\nn\\
&\quad~\cdot\Bigg(\sum^{\infty}_{i=1}\mu^{1+\epsilon}_ig^{2}_i(\tau -s)\Bigg)^{1/2}ds\nn\\
&\leq C\nn.
%\end{split}
\end{align}
Hence the series (\ref{eqA.4}) is absolutely and uniformly convergence on $\bar{Q}$, and
\begin{equation}
|v(x,\tau)|\leq C,~{\rm for~} \forall (x,\tau)\in \bar{Q}.
\end{equation}

\romannumeral2) Differentiating Eq. (\ref{Eq.A1}) term by term with respect to $\tau$ for $(x,\tau)\in Q$ yields
\begin{equation}
\begin{split}
v_{\tau}(x,\tau) &= \sum^{\infty}_{i=1}\Big[-\widetilde{\sigma}_i e^{-\lambda\tau}\mu_i\tau^{\alpha-1}E_{\alpha,\alpha}(-\mu_i\tau^{\alpha}) + e^{-\lambda\tau}\tau^{\alpha-1}E_{\alpha,\alpha}(-\mu_i\tau^{\alpha})g_i(0)~+ \\
&\quad~\int^{\tau}_0e^{-\lambda s}s^{\alpha-1}
E_{\alpha,\alpha}(-\mu_i s^{\alpha})g'_i(\tau -s)ds\Big]\psi_i(x),
\end{split}
\end{equation}
where we use the fact $\frac{dE_{\alpha,1}(-\lambda \tau^{\alpha})}{d\tau} = -\lambda\tau^{\alpha-1}E_{\alpha,\alpha}(-\lambda\tau^{\alpha})$ to differentiate $E_{\alpha,1}(\cdot)$. Based on the proof of \romannumeral1), it is not hard to check that
\begin{equation}
|v_{\tau}(x,\tau)|\leq C\tau^{\alpha-1},~{\rm for~}(x,\tau)\in Q.
\label{KNY1x}
\end{equation}
Moreover, one can show that ${}^{C}_0\mathbb{D}^{\alpha,\lambda}_{\tau}v$ exists and $v$ is the solution of Eq. (\ref{eq2.1}), the maximum principle guarantees
the uniqueness of solution, which completes the proof.\hfill $\Box$
%_{m,k}=X_{m+1}B_{m,m+1}^T\Phi_{m,k}=X_{m+1}\Psi_{m+1,k}\Sigma_k\
%&=X_{m+1}Q_{k+1}R_{k+1}\left[
%                      {c}
%                        I_k \
%                        e_{m+1}^T\Psi_{m+1,k} \\
%                      \end{array}
%                    \right]\Sigma_k\\
%&=\widetilde{X}_{k+1}R_{k+1}\left[
%                      {c}
%                        I_k \\
%                        e_{m+1}^T\Psi_{m+1,k} \\
%                      \end{array}
%                    \right]\Sigma_k.
%{k+1}
%\left[
%                      \begin{array}{c}
%                        I_k \\
%                        e_{m+1}^T\Psi_{m+1,k} \\
%
%                    \right].

At this stage, we can estimate other derivatives $\frac{\partial^2 v}{\partial\tau^2}$ and $\frac{\partial^p v}{\partial x^p}$~($p=1,2,3,4$) on the domain $Q$. At present,
we summarize all the above activity in the following conclusion.
%%                                                                                              u
%%                                                                                              v
%%
%%                                                                                          \right]
\begin{theorem}
Assume that $\sigma, g(\cdot,\tau)\in H(\mathcal{L}^{\frac{5+\epsilon}{2}})$, $g_{\tau}(\cdot,\tau)$ and $g_{\tau\tau}(\cdot,\tau)$ are
in $H(\mathcal{L}^{\frac{5+\epsilon}{2}})$ for each $\tau\in(0,T]$ with
\begin{equation*}
\|g(\cdot,\tau)\|_{\mathcal{L}^{\frac{5+\epsilon}{2}}} + \|g_{\tau}(\cdot,\tau)\|_{\mathcal{L}^{\frac{1+\epsilon}{2}}} + \tau^{\rho}\|g_{\tau\tau}(\cdot,\tau)\|_{\mathcal{L}^{\frac{1+\epsilon}{2}}}
\leq C_1
\end{equation*}
for $\forall \tau\in(0,T], \forall\epsilon > 0$ and the constant $\rho < 1$, where $C_1$ is a constant independent of $\tau$. Then
there exists a constant $C$ such that
\begin{equation}
\begin{cases}
\left|\frac{\partial^{p}v(x,\tau)}{\partial x^p}\right|\leq C,& {\rm for}~p = 0,1,2,3,4,\\
\left|\frac{\partial^{\ell}v(x,\tau)}{\partial \tau^{\ell}}\right| \leq C(1 + \tau^{\alpha - \ell}), &
{\rm for}~\ell = 0,1,2
\end{cases}
\label{MNX}
\end{equation}
for $\forall(x,\tau)\in\bar{\Omega}\times(0,T]$.
\end{theorem}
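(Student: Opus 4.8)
The plan is to differentiate the series representation (\ref{Eq.A1}) term by term---$p$ times in $x$ for the spatial bounds and twice in $\tau$ for the temporal ones---and to control each resulting series by pairing a uniform Mittag--Leffler estimate with a Cauchy--Schwarz inequality weighted by the appropriate power of $\mu_i$. Two ingredients are used repeatedly: the decay bound $|E_{\alpha,\beta}(-\mu_i\tau^{\alpha})|\le C/(1+\mu_i\tau^{\alpha})$ for the two-parameter M-L function (cf.\ \cite[Lemma 2.4]{Wang2022}), and the eigenfunction growth estimate $|\psi_i^{(p)}(x)|\le C\mu_i^{p/2}$, which follows from the Sturm--Liouville relation $\mathcal{L}\psi_i=\mu_i\psi_i$ together with the uniform bound $|\psi_i(x)|\le C$. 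Term-by-term differentiation is legitimate because the majorant series below converge uniformly.

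First I would treat the spatial derivatives. Differentiating (\ref{Eq.A1}) gives $\partial_x^{p}v=\sum_i[\widetilde{\sigma}_i G_i(\tau)+F_i(\tau)]\psi_i^{(p)}(x)$; using $|\psi_i^{(p)}|\le C\mu_i^{p/2}$ and $|G_i(\tau)|\le C$, the $\widetilde{\sigma}_i$-part is dominated by $\sum_i|\widetilde{\sigma}_i|\mu_i^{p/2}$, and Cauchy--Schwarz with the weight $\mu_i^{5+\epsilon}$ yields
\begin{equation*}
\sum_i|\widetilde{\sigma}_i|\mu_i^{p/2}\le\Big(\sum_i\mu_i^{p-(5+\epsilon)}\Big)^{1/2}\Big(\sum_i\mu_i^{5+\epsilon}|\widetilde{\sigma}_i|^2\Big)^{1/2}\le C\|\sigma\|_{\mathcal{L}^{\frac{5+\epsilon}{2}}},
\end{equation*}
where the first factor converges for $p\le 4$ since $p-(5+\epsilon)\le-1-\epsilon$, exactly as in the preceding proof. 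The $F_i$-part is handled by the same device after moving the sum inside the convolution and invoking $\|g(\cdot,\tau-s)\|_{\mathcal{L}^{\frac{5+\epsilon}{2}}}\le C$, the remaining integral $\int_0^{\tau}s^{\alpha-1}\,ds$ being finite. This gives $|\partial_x^{p}v|\le C$ for $p=0,1,2,3,4$, uniformly in $\tau$.

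Next I would handle the temporal derivatives. The cases $\ell=0,1$ are parts (i)--(ii) of the preceding theorem. For $\ell=2$ I would differentiate $v_{\tau}$ once more, splitting it into the $\widetilde{\sigma}_i$-term, the $g_i(0)$-term, and the convolution term. On the first two the identity $\frac{d}{d\tau}[\tau^{\alpha-1}E_{\alpha,\alpha}(-\mu_i\tau^{\alpha})]=\tau^{\alpha-2}E_{\alpha,\alpha-1}(-\mu_i\tau^{\alpha})$ produces the singular factor $\tau^{\alpha-2}$; since $|E_{\alpha,\alpha-1}(-\mu_i\tau^{\alpha})|\le C$, Cauchy--Schwarz with weight $\mu_i^{5+\epsilon}$ gives $\sum_i|\widetilde{\sigma}_i|\mu_i\tau^{\alpha-2}|E_{\alpha,\alpha-1}(-\mu_i\tau^{\alpha})|\le C\tau^{\alpha-2}$ because $\sum_i\mu_i^{-3-\epsilon}<\infty$, and the $g_i(0)$-term is bounded the same way. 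Applying the Leibniz rule to the convolution term yields a boundary contribution proportional to $\tau^{\alpha-1}E_{\alpha,\alpha}(-\mu_i\tau^{\alpha})g'_i(0)$, bounded by $C\tau^{\alpha-1}$ via $\|g_{\tau}(\cdot,0)\|_{\mathcal{L}^{\frac{1+\epsilon}{2}}}\le C_1$, plus an integral involving $g''_i(t)=\langle g_{\tau\tau}(\cdot,t),\psi_i\rangle$.

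The hard part will be this last integral, $\int_0^{\tau}e^{-\lambda s}s^{\alpha-1}E_{\alpha,\alpha}(-\mu_i s^{\alpha})g''_i(\tau-s)\,ds$, because $g_{\tau\tau}(\cdot,t)$ is only controlled by $\|g_{\tau\tau}(\cdot,t)\|_{\mathcal{L}^{\frac{1+\epsilon}{2}}}\le C_1 t^{-\rho}$ and so may blow up like $(\tau-s)^{-\rho}$ as $s\to\tau$. I would first sum in $i$ by Cauchy--Schwarz with weight $\mu_i^{1+\epsilon}$, using $\sum_i\mu_i^{-1-\epsilon}|E_{\alpha,\alpha}(-\mu_i s^{\alpha})|^2\le C$, to bound the $i$-sum of the integrand by $C s^{\alpha-1}(\tau-s)^{-\rho}$; the surviving integral $\int_0^{\tau}s^{\alpha-1}(\tau-s)^{-\rho}\,ds=B(\alpha,1-\rho)\,\tau^{\alpha-\rho}$ is finite precisely because $\alpha>0$ and $\rho<1$, and $\tau^{\alpha-\rho}\le C(1+\tau^{\alpha-2})$ on $(0,T]$. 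Collecting the $\tau^{\alpha-2}$ contributions from the differentiated M-L factors with the bounded convolution term then gives $|v_{\tau\tau}|\le C(1+\tau^{\alpha-2})$, which together with the spatial bounds completes (\ref{MNX}).
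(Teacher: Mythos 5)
Your proposal is correct and follows exactly the route the paper intends: the paper states this theorem without proof, presenting it as a summary that ``follows from the above activity,'' i.e.\ by repeating the separation-of-variables, Mittag--Leffler-decay and weighted Cauchy--Schwarz argument of the preceding theorem's parts (i)--(ii), which is precisely the machinery you extend to $p\le 4$ and $\ell=2$. The only points worth flagging are minor: the bound $|\psi_i^{(p)}|\le C\mu_i^{p/2}$ for odd $p$ is cleanest read off the explicit sine eigenfunctions of this constant-coefficient Sturm--Liouville problem rather than from $\mathcal{L}\psi_i=\mu_i\psi_i$ alone, and your Beta-function estimate of the $g''_i$ convolution is exactly the step where the hypothesis $\tau^{\rho}\|g_{\tau\tau}(\cdot,\tau)\|_{\mathcal{L}^{\frac{1+\epsilon}{2}}}\le C_1$ with $\rho<1$ is consumed.
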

In short, we shall assume that the solution $v$ of (\ref{eq2.1}) satisfies the bounds (\ref{MNX}).
Thus in general, the solution $v$ of (\ref{eq2.1}) will have a weak singularity
along $\tau = 0$. Its presence leads to significant practical and theoretical difficulties in
designing and analysing numerical methods for (\ref{eq2.1}). That is just why we consider
the non-uniform temporal discretization in this paper.
% and the projection subspace $span\{\mathbf{X}_k\}$.
%%\noindent
%
%
%\medskip
\section*{Acknowledgment}
{\em The authors would like to thank anonymous reviewers, Dr. Can Li and Dr. Jinye Shen whose
insightful comments and careful proof-checks helped to improve the current paper. This work was supported
by the Applied Basic Research Program of Sichuan Province (2020YJ0007)
and the Sichuan Science and Technology Program (2022ZYD0006). X.-M.
Gu also thanks Prof. Dongling Wang for helpful discussions
during his visiting to Xiangtan University.}

\def\refname{\large \bfseries References}
%\end{center}

\end{document}